\numberwithin{equation}{section}
\theoremstyle{plain}
\newtheorem{Th}{Theorem}[section]
\newtheorem{Lemma}[Th]{Lemma}
\newtheorem{Cor}[Th]{Corollary}
\newtheorem{Prop}[Th]{Proposition}
 \theoremstyle{definition}
\newtheorem{Def}[Th]{Definition}
\newtheorem{Rem}[Th]{Remark}
\newtheorem{?}[Th]{Problem}
\newtheorem{Ex}[Th]{Example}
\newcommand{\E}{\mathbb{E}}
\newcommand{\ia}{\mathrm{ia}}
\newcommand{\ea}{\mathrm{ea}}
\newcommand{\alt}{\mathrm{alt}}
\begin{document}

\title{Permutation Tutte polynomial}

\author{Csongor Beke}

\address{Trinity College, University of Cambridge, CB2 1TQ, United Kingdom \\
}

\email{bekecsongor@gmail.com}

\author{Gergely K\'al Cs\'aji}

\address{Institute of Economics, Centre for Economic and Regional Studies, Hungary, H-1097 Budapest, T\'{o}th K\'{a}lm\'{a}n u. 4 \\
}

\email{csaji.gergely@krtk.hun-ren.hu}

\author[P. Csikv\'ari]{P\'{e}ter Csikv\'{a}ri}

\address{HUN-REN Alfr\'ed R\'enyi Institute of Mathematics, H-1053 Budapest Re\'altanoda utca 13-15 \and ELTE: E\"{o}tv\"{o}s Lor\'{a}nd University \\ Mathematics Institute, Department of Computer
Science \\ H-1117 Budapest
\\ P\'{a}zm\'{a}ny P\'{e}ter s\'{e}t\'{a}ny 1/C}

\email{peter.csikvari@gmail.com}

\author{S\'ara Pituk}

\address{ELTE: E\"{o}tv\"{o}s Lor\'{a}nd University \\ H-1117 Budapest
\\ P\'{a}zm\'{a}ny P\'{e}ter s\'{e}t\'{a}ny 1/C}

\email{pituksari@gmail.com}

\thanks{P\'eter Csikv\'ari is supported by the MTA-R\'enyi Counting in  Sparse Graphs ``Momentum'' Research Group and by the Dynasnet ERC Synergy
project (ERC-2018-SYG 810115). Gergely K\'al Cs\'aji is supported by the Hungarian Scientific Research Fund, OTKA, Grant No. K143858 and by the Momentum Grant of the Hungarian Academy of Sciences, grant number 2021-1/2021. S\'ara Pituk is supported by the Ministry of Culture and Innovation and the National Research, Development and Innovation Office within the Quantum Information National Laboratory of Hungary (Grant No. 2022-2.1.1-NL-2022-00004)}

 \subjclass[2010]{Primary: 05C30. Secondary: 05C31, 05C70}

 \keywords{forests, connected spanning subgraphs, acyclic orientations}

\begin{abstract} The classical Tutte polynomial is a two-variate polynomial $T_G(x,y)$ associated to graphs or more generally, matroids. In this paper, we introduce a polynomial $\widetilde{T}_H(x,y)$ associated to a bipartite graph $H$ that we call the permutation Tutte polynomial of the graph $H$. It turns out that $T_G(x,y)$ and  $\widetilde{T}_H(x,y)$ share many properties, and the permutation Tutte polynomial serves as a tool to study the classical Tutte polynomial. We discuss the analogues of Brylawsi's identities and Conde--Merino--Welsh type inequalities. In particular, we will show that if $H$ does not contain isolated vertices, then 
$$\widetilde{T}_H(3,0)\widetilde{T}_H(0,3)\geq \widetilde{T}_H(1,1)^2,$$
which gives a short proof of the analogous result of Jackson:
$$T_G(3,0)T_G(0,3)\geq T_G(1,1)^2$$
for graphs without loops and bridges. We also give improvement on the constant $3$ in this statement by showing that one can replace it with $2.9243$.
\end{abstract}

\maketitle

\section{Introduction}
For a graph $G=(V,E)$ with $v(G)$ vertices and $e(G)$ edges, the Tutte polynomial $T_G(x,y)$ is defined as
$$T_G(x,y)=\sum_{A\subseteq E}(x-1)^{k(A)-k(E)}(y-1)^{k(A)+|A|-v(G)},$$
where $k(A)$ denotes the number of connected components of the graph $(V,A)$, see \cite{tutte1954contribution}. There is a vast literature on the properties of the Tutte polynomial and its applications, for instance, \cite{brylawski1992tutte,crapo1969tutte,ellis2011graph,welsh1999tutte} or the book \cite{ellis2022handbook}.

This paper aims to introduce an auxiliary polynomial that helps study the Tutte polynomial and has properties that make it interesting even on its own. We call this new polynomial the \emph{permutation Tutte polynomial}. It is defined for every bipartite graph.

\begin{Def} \label{main-def}
Let $H=(A,B,E)$ be a bipartite graph. Suppose that $V(H)=[m]$. For a permutation $\pi:[m]\to [m]$, we say that a vertex $i\in A$ is internally active if
$$\pi(i)>\max_{j\in N_H(i)}\pi(j),$$
where the maximum over an empty set is set to be $-\infty$.
Similarly, we say that vertex $j\in B$ is externally active if
$$\pi(j)>\max_{i\in N_H(j)}\pi(i).$$
Let $\ia(\pi)$ and $\ea(\pi)$ be the number of internally and externally active vertices in $A$ and $B$, respectively.
Let
$$\widetilde{T}_H(x,y)=\frac{1}{m!}\sum_{\pi \in S_m}x^{\ia(\pi)}y^{\ea(\pi)}.$$
We will call $\widetilde{T}_H(x,y)$ the permutation Tutte polynomial of $H$.
\end{Def}

\begin{Def}
The coefficients of $\widetilde{T}_H(x,y)$ will be denoted by $t_{i,j}(H)$, that is,
$$\widetilde{T}_H(x,y)=\sum_{i,j} t_{i,j}(H)x^iy^j.$$
\end{Def}

\begin{Ex}\label{example}
Let $H=P_5$ be the path on $5$ vertices where $A$ consists of $3$, $B$ consists of $2$ vertices, respectively. Then
$$\widetilde{T}_{P_5}(x,y)=\frac{2}{15}x^3 + \frac{4}{15}x^2 + \frac{1}{3}xy + \frac{2}{15}y^2 + \frac{1}{15}x + \frac{1}{15}y.$$
\end{Ex}

The motivation behind Definition~\ref{main-def} is the following characterisation of the Tutte polynomial.

\begin{Th}[Tutte \cite{tutte1954contribution}] \label{ia-ea-characterization}
Let $G$ be a connected graph with $m$ edges. Label the edges with $1,2,\dots,m$ arbitrarily. In the case of a spanning tree $T$ of $G$, let us call an edge $e\in E(T)$ internally active if $e$ has the largest label among the edges in the cut determined by $T$ and $e$ by removing $e$ from $T$. Let us call an edge $e\notin E(T)$ externally active if $e$ has the largest label among the edges in the cycle determined by $T$ and $e$ by adding $e$ to $T$. Let $\mathrm{ia}(T)$ and $\mathrm{ea}(T)$ be the number of internally and externally active edges, respectively. Then
$$T_G(x,y)=\sum_{T\in \mathcal{T}(G)}x^{\mathrm{ia}(T)}y^{\mathrm{ea}(T)},$$
where the summation goes for all spanning trees of $G$.
\end{Th}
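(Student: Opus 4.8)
The plan is to prove the claimed identity by induction on the number of edges $m$, using deletion--contraction with respect to the edge carrying the \emph{smallest} label. Since the statement implicitly asserts that the right-hand side is independent of the chosen labelling, I would actually prove it for every labelling of every connected graph at once: when we pass to a minor it inherits the restriction of the labelling, which is again an arbitrary valid labelling, so the induction hypothesis applies. The base case $m=0$ is trivial, since then $G$ is a single vertex, $T_G(x,y)=1$, and the unique spanning tree has $\ia=\ea=0$.

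Let $e$ be the edge with the smallest label. If $e$ is a bridge, then every spanning tree $T$ contains $e$, the fundamental cut of $e$ is $\{e\}$ so $e$ is internally active, and $T\mapsto T/e$ is a bijection onto $\mathcal{T}(G/e)$; since a bridge lies in no cycle and $e\in T$ lies in no fundamental cut other than its own, every edge $f\neq e$ keeps its status, so the activity sum of $G$ equals $x$ times that of $G/e$, which by induction and the bridge rule $T_G=xT_{G/e}$ equals $T_G(x,y)$. If $e$ is a loop the argument is dual: $e$ is externally active, deletion is a bijection onto $\mathcal{T}(G-e)$, and the activity sum of $G$ is $y$ times that of $G-e$, matching $T_G=yT_{G-e}$.

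Now suppose $e$ is neither a bridge nor a loop. Split $\mathcal{T}(G)$ into trees containing $e$ (carried bijectively to $\mathcal{T}(G/e)$ by contraction) and trees avoiding $e$ (carried bijectively to $\mathcal{T}(G-e)$ by inclusion); both $G/e$ and $G-e$ are connected with $m-1$ edges. Because $e$ has the smallest label, its fundamental cut (when $e\in T$) and its fundamental cycle (when $e\notin T$) each have at least two edges, so $e$ is neither internally nor externally active and contributes only the factor $1$. The key lemma to prove is that every edge $f\neq e$ has the same activity with respect to $(G,T)$ as with respect to the corresponding pair in the minor. This holds because the relevant fundamental cut or cycle in the minor is obtained from the one in $G$ by deleting $e$ exactly when $e$ occurs in it, and deleting the globally smallest label from a set of size at least two changes neither its maximum nor whether $f$ is that maximum. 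Summing, the activity sum of $G$ equals that of $G/e$ plus that of $G-e$, which by induction is $T_{G/e}(x,y)+T_{G-e}(x,y)=T_G(x,y)$.

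The main obstacle is this key lemma, together with the realization that one must delete or contract the \emph{minimum}-label edge rather than the maximum one: contracting the maximum-label edge of a spanning tree can delete the largest edge of some fundamental cycle and thereby \emph{create} external activity, so the naive bijection would not preserve activities. A different route avoids deletion--contraction entirely: by Crapo's theorem the subsets of $E$ partition into the lattice intervals $[T\setminus\mathrm{In}(T),\,T\cup\mathrm{Ex}(T)]$ indexed by spanning trees $T$, where $\mathrm{In}(T)$ and $\mathrm{Ex}(T)$ are the internally and externally active edges; summing the defining expression of $T_G(x,y)$ over one such interval collapses to $x^{\ia(T)}y^{\ea(T)}$ via $\sum_{S\subseteq\mathrm{In}(T)}(x-1)^{|S|}=x^{\ia(T)}$ and the analogous identity in $y$, and summing over $T$ gives the theorem; there the difficulty shifts to showing that these intervals are pairwise disjoint and cover $2^E$.
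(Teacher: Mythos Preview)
The paper does not supply its own proof of this theorem: it is quoted as a classical result of Tutte, and the paper even remarks immediately afterwards that it was originally Tutte's \emph{definition} of the polynomial. It is invoked only to motivate Definition~\ref{main-def}, so there is nothing in the paper to compare your argument against.

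That said, your deletion--contraction proof is correct and is the standard route. The crucial choice of the \emph{minimum}-label edge is exactly right, and your key lemma---that for any $f\neq e$ the fundamental cut or cycle of $f$ in the minor differs from its version in $G$ only by the possible removal of $e$---holds for the reasons you sketch: a tree edge $e$ lies in no fundamental cut other than its own and a non-tree edge $e$ lies in no fundamental cycle other than its own, while in the remaining cases passing to the minor simply drops $e$. Since $e$ carries the smallest label, dropping it cannot change which edge is maximal, so the activities of all $f\neq e$ survive, and $e$ itself is active only in the bridge and loop cases where its fundamental set is a singleton. The alternative you mention via Crapo's interval partition of $2^E$ is also a well-known and perfectly valid proof.
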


Theorem~\ref{ia-ea-characterization} was originally a definition for the Tutte polynomial \cite{tutte1954contribution}. This characterization of the Tutte polynomial immediately shows that the coefficients of the Tutte polynomial are non-negative. In this theorem, we are restricted to the same labelling of the edges for all spanning trees. For those who have never seen this definition before, it might be very surprising that the Tutte polynomial is independent of the actual choice of the labelling.

To explain the connection between $T_G(x,y)$ and $\widetilde{T}_H(x,y)$, we need the concept of the local basis exchange graph.

\begin{Def} The local basis exchange graph $H[T]$ of a graph $G=(V,E)$ with respect to a spanning tree  $T$ is defined as follows.
 The graph $H[T]$ is a bipartite graph whose vertices are the edges of $G$. One bipartite class consists of the edges of $T$, the other consists of the edges of $E\setminus T$, and we connect a spanning tree edge $e$ with a non-edge $f$ if $f$ is in the cut determined by $e$ and $T$, equivalently, $e$ is in the cycle determined by $f$ and $T$. (Clearly, this definition works for general matroids and their basis.)
\end{Def}

Figure 1 depicts a graph $G$ with a spanning tree $T$ and the bipartite graph $H[T]$ obtained from $T$. 
\bigskip

For a fixed labelling of the edges of $G$, we get a labelling of the vertices of $H[T]$, and the internally (externally) active edges of $G$ correspond to internally (externally) active vertices of $H[T]$, so the two definitions of internal and external activity are compatible. Taking all permutations of the edge labels and averaging out will correspond to averaging out the constant $T_G(x,y)$ on the level of $G$, and will lead to the definition of $\widetilde{T}_{H[T]}(x,y)$.
This gives the identity
$$T_G(x,y)=\sum_{T\in \mathcal{T}(G)}\widetilde{T}_{H[T]}(x,y),$$
where the summation is over all spanning trees of $G$ (see Lemma~\ref{conn} for further details). This identity is the starting point of several proofs of our theorems concerning the Tutte polynomial.

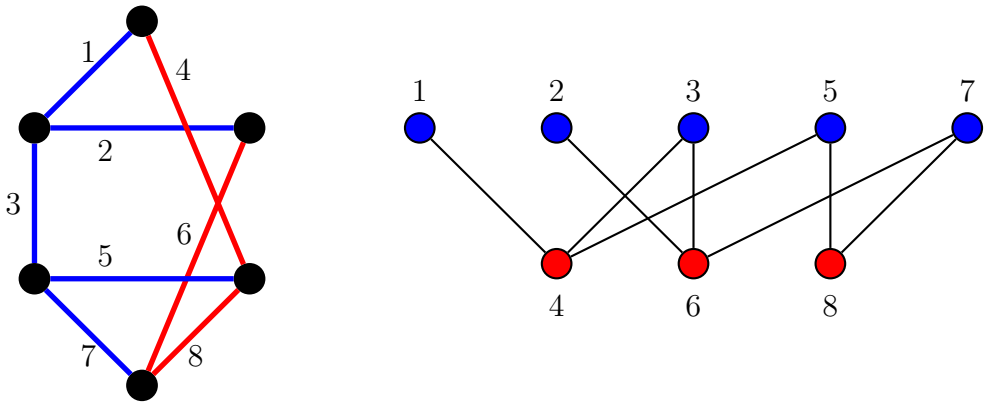
\begin{figure}[htp] 
\begin{tikzpicture}[, scale=0.33, baseline=0pt, node distance={20mm}, thick, main/.style = {draw, circle, fill=black}] 
\node[main] (1) {}; 
\node[main] (2) [above right of=1] {}; 
\node[main] (3) [below right of=2]{}; 
\node[main] (4) [below of=1]{}; 
\node[main] (5) [below of=3]{}; 
\node[main] (6) [below right of=4]{}; 
\draw [color=blue,line width=2pt](1) edge node[
above,black]{$1$} (2) ; 
\draw [color=blue, line width=2pt](1) edge node[pos=0.3, below, black]{$2$} (3) ; 
\draw [color=blue, line width=2pt](1) edge node[left, black]{$3$} (4) ; 
\draw [color=red, line width=2pt](2) edge  node[pos=0.15, right, black]{$4$} (5) ; 
\draw [color=red, line width=2pt](3) edge  node[pos=0.4, left, black]{$6$} (6) ; 
\draw [color=blue, line width=2pt](4) edge node[pos=0.3, above, black]{$5$} (5) ; 
\draw [color=blue, line width=2pt](4) edge  node[below, black]{$7$} (6) ; 
\draw [color=red, line width=2pt](5) edge  node[below, black]{$8$} (6) ; 
\end{tikzpicture} 
\qquad \qquad
\begin{tikzpicture}[, scale=0.33, baseline=0pt, node distance={18mm}, thick, main/.style = {draw, circle}]
\node[main, fill=blue, label=$1$] (1) {}; 
\node[main, fill=blue, label=$2$] (2) [right of=1]{};
\node[main,fill=blue, label=$3$] (3) [right of=2]{}; 
\node[main,fill=blue, label=$5$] (4) [right of=3]{}; 
\node[main,fill=blue, label=$7$] (5) [right of=4]{}; 
\node[main,fill=red, label={[yshift=-30pt]$4$}] (6) [below of=2]{}; 
\node[main,fill=red, label={[yshift=-30pt]$6$}] (7) [below of=3]{};
\node[main,fill=red, label={[yshift=-30pt]$8$}] (8) [below of=4]{};
\draw (1) -- (6) ; 
\draw (2) -- (7) ; 
\draw (3) -- (6) ; 
\draw (3) -- (7) ; 
\draw (4) -- (6) ; 
\draw (4) -- (8) ; 
\draw (5) -- (7) ; 
\draw (5) -- (8) ; 
\end{tikzpicture} 
\caption{Example for a graph $G$ and the local basis exchange graph $H[T]$ obtained from a spanning tree $T$.}
\end{figure}

In this paper, we use this machinery to study linear identities and inequalities.
In particular, we study the analogues of Brylawski's identities \cite{brylawski1992tutte}.
We also study a classical conjecture of Merino and Welsh \cite{merino1999forests} asserting that if $G$ is a graph without loops and bridges, then
$$\max(T_G(2,0),T_G(0,2))\geq T_G(1,1).$$
Conde and Merino \cite{conde2009comparing} gave the following strengthened version of this conjecture:
$$T_G(2,0)+T_G(0,2)\geq 2T_G(1,1)$$
and
$$T_G(2,0)T_G(0,2)\geq T_G(1,1)^2.$$
We will refer to these inequalities as the additive and multiplicative versions of the Conde--Merino--Welsh conjecture later.

Jackson \cite{jackson2010inequality} proved the following inequality:
$$T_G(3,0)T_G(0,3)\geq T_G(1,1)^2$$
for every graph $G$ without loops and bridges.
In this paper, we show that 
$$\widetilde{T}_H(3,0)\widetilde{T}_H(0,3)\geq \widetilde{T}_H(1,1)^2$$
holds for every bipartite graph $H$ without isolated vertices, 
 and this inequality implies Jackson's inequality (see the transfer-lemma, Lemma~\ref{quadratic-connection}). Our proof is completely different from the original proof of Jackson's inequality.
Our proof uses the Harris' inequality from probability theory and relies on the fact that permutations on $m$ elements can be generated by simply ordering $m$ random numbers chosen uniformly from $[0,1]$. This idea is the heart of several inequalities for $\widetilde{T}_H(x,y)$, and this is the key advantage of $\widetilde{T}_H(x,y)$ over $T_G(x,y)$.

As it turned out, the Merino--Welsh conjecture is not true for all matroids \cite{beke2024merino}, implying by the transfer-lemma that $\widetilde{T}_H(2,0)\widetilde{T}_H(0,2)\geq \widetilde{T}_H(1,1)^2$ is not true for all bipartite graphs without isolated vertices. Nevertheless, we also give several graph classes for which $\widetilde{T}_H(2,0)\widetilde{T}_H(0,2)\geq \widetilde{T}_H(1,1)^2$ holds, including complete bipartite graphs, regular bipartite graphs and trees. We also improve on Jackson's inequality by showing that
$$\widetilde{T}_H(x,0)\widetilde{T}_H(0,x)\geq \widetilde{T}_H(1,1)^2$$
for every bipartite graph $H$ without isolated vertices if $x\geq 2.9243$. By the transfer lemma, this implies that 
$$T_G(x,0)T_G(0,x)\geq T_G(1,1)^2$$
for every graph $G$ without loops and bridges (and matroids without loops and coloops) for the same values of $x$.
\bigskip

\noindent \textbf{Graphs and matroids.} 
 This is a remark about the Tutte polynomials of matroids.
In this paper, we mainly consider $T_G(x,y)$ for a graph $G$, but all our proofs work verbatim for the Tutte polynomial $T_M(x,y)$ of a matroid $M$. Furthermore, there are two results where it is more convenient to give immediately the statements for matroids instead of graphs, these are Corollary~\ref{simple-co-simple matroids} and Theorem~\ref{matr}. For this reason, we give a very brief account into the theory of matroids. 

A matroid $M$ is a pair $(E,\mathcal{I})$ such that $\mathcal{I}\subseteq 2^{E}$,  satisfying the axioms 
\begin{enumerate}[label=(\roman*)]
\item $\emptyset \in \mathcal{I}$, \item if $A'\subseteq A\in \mathcal{I}$, then $A'\in \mathcal{I}$, and \item if $A,B\in \mathcal{I}$ such that $|B|<|A|$, then there exists an $x\in A\setminus B$ such that $B\cup \{x\} \in \mathcal{I}$.
\end{enumerate}
The elements of $\mathcal{I}$ are called independent sets. Given a set $S\subseteq E$, the maximal independent subsets of $S$ all have the same cardinality, and this cardinality is called the rank of the set $S$, denoted by $r(S)$. The maximum size independent sets of $M$ are called bases, and their set is denoted by $\mathcal{B}(M)$. The size every basis of $M$ is the same, and this value is called the rank of $M$. The dual of a matroid $M$ is the matroid  $M^*$ whose bases are $\{E\setminus B\ |\ B\in \mathcal{B}(M) \}$. For further details on matroids, see for instance \cite{oxley1992matroid}

Given a graph $G=(V,E)$, the edge sets of the spanning forests of $G$ form the independent sets of a matroid $M_G$, called the cycle matroid of $G$. If $G$ is connected, then the bases of $M_G$ are the spanning trees of $G$. One can define the Tutte polynomial of a matroid 
 as $$T_M(x,y)=\sum_{S\subseteq E}(x-1)^{r(E)-r(S)}(y-1)^{|S|-r(S)},$$
where $r(S)$ is the rank of a set $S\subseteq E$. When $M=M_G$, then $T_{M_G}(x,y)=T_G(x,y)$. A loop in a matroid $M$ is an element $x\in E$ such that $r(\{x\})=0$, that is, $\{x\}\notin \mathcal{I}$. A coloop is an element that is a loop in the dual $M^*$ of the matroid $M$. Equivalently, a coloop is an element that is contained in every basis of $M$. For a cycle matroid $M_G$, loops correspond to loop edges and coloops correspond to bridges in the graph $G$. We call a matroid simple if $r(S)=|S|$ for $|S|\leq 2$, so there are no loops and there are no pairs $x,x'$ such that $\{x,x'\}\notin \mathcal{I}$, i.e., there are no parallel elements. We call a matroid cosimple if $M^*$ is simple.

Since we never use that our matroid comes from a graph, all our results hold even for general matroids.
\bigskip

\noindent \textbf{Notations.} 
The notation $[m]$ stands for the set $\{1,2,\dots ,m\}$.

Throughout this paper, $G$ will denote an arbitrary graph, and $H$ will denote a bipartite graph. For a vertex $v$, the graph $H-v$ is the graph obtained from $H$ by deleting $v$.
The graph $P_n$ denotes the path on $n$ vertices, and $C_n$ is the cycle on $n$ vertices. $K_n$ denotes the complete graph on $n$ vertices, and $K_{a,b}$ denotes the complete bipartite graph with classes of size $a$ and $b$.

For a bipartite graph $H=(A,B,E)$, we will also use the notations $A(H),B(H)$ for the bipartite classes if $H$ is not clear from the context. For any graph $G$, $v(G)$ denotes the number of vertices of $G$. When $G$ is an arbitrary graph, we generally use the notation $v(G)=n$, but when $H$ is a bipartite graph, then we use the notation $v(H)=m$, since $H$ often comes from a spanning tree $T$ of $G$, and in this case, the vertices of $H$ correspond to the edges of $G$.
\bigskip

\noindent \textbf{Organization of this paper.} In Section~\ref{sect: basic-recursions}, we establish some basic recursive formulas for $\widetilde{T}_H(x,y)$. Then in Section~\ref{sect: connection} we build the connection between $T_G(x,y)$ and $\widetilde{T}_H(x,y)$. In Section~\ref{sect: CMW inequalities}, we give some examples for Conde--Merino--Welsh-type inequalities for $T_G(x,y)$ and $\widetilde{T}_H(x,y)$. In Section~\ref{sect: FKG-inequalities},
we show how to apply Harris' inequality
to obtain several inequalities for $\widetilde{T}_H(x,y)$. In Section~\ref{sect: improvement 3}, we build on the previous section to improve on the constant in Jackson's inequality. In Section~\ref{sect: alternating function}, we study a special coefficient of the polynomial $\widetilde{T}_H(x,y)$. 
In Section~\ref{sect: Brylawski} we discuss Brylawski's identities for $\widetilde{T}_H(x,y)$ and $T_G(x,y)$. Finally, in Section~\ref{sect: concluding remarks} we include some remarks.

\section{Basic recursions} \label{sect: basic-recursions}

In this section, we establish several basic recursive identities for the permutation Tutte polynomial that we will use subsequently.

\begin{Lemma}\label{comp}
If $H$ is the disjoint union of $H_1$ and $H_2$, then
$$\widetilde{T}_H(x,y)=\widetilde{T}_{H_1}(x,y)\widetilde{T}_{H_2}(x,y).$$
In particular, if $v\in A$ is an isolated vertex, then
$$\widetilde{T}_H(x,y)=x\widetilde{T}_{H-v}(x,y).$$
Similarly, if $v\in B$ is an isolated vertex, then
$$\widetilde{T}_H(x,y)=y\widetilde{T}_{H-v}(x,y).$$
\end{Lemma}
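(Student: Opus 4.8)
The plan is to prove the multiplicativity over disjoint unions directly from Definition~\ref{main-def} by analyzing how internal and external activity behave when the vertex set splits. Write $V(H_1) = \{v_1,\dots,v_p\}$ and $V(H_2) = \{w_1,\dots,w_q\}$ with $m = p+q$, so $V(H) = [m]$. The key observation is that activity of a vertex $u$ depends only on the \emph{relative order} $\pi$ induces on $u$ and its neighbors $N_H(u)$; since $H$ is a disjoint union, $N_H(u) \subseteq V(H_1)$ for $u \in V(H_1)$ and likewise for $H_2$. Thus whether $u \in V(H_1)$ is active under $\pi$ depends only on the restriction of $\pi$ to $V(H_1)$, read off as a permutation of $[p]$ by order-isomorphism (and similarly on the $H_2$-side).

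The main step is a counting/averaging argument. For a permutation $\pi \in S_m$, let $\sigma_1 \in S_p$ be the permutation encoding the relative order of $\pi$ on $V(H_1)$ and $\sigma_2 \in S_q$ the one on $V(H_2)$. Then $\ia(\pi) = \ia(\sigma_1) + \ia(\sigma_2)$ and $\ea(\pi) = \ea(\sigma_1) + \ea(\sigma_2)$, where on the right we interpret activity with respect to $H_1$ and $H_2$. The map $\pi \mapsto (\sigma_1,\sigma_2)$ is uniformly $\binom{m}{p} = \frac{m!}{p!\,q!}$-to-one: a pair $(\sigma_1,\sigma_2)$ together with a choice of which $p$ of the $m$ values $\{1,\dots,m\}$ are assigned to $V(H_1)$ determines $\pi$. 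Therefore
\begin{align*}
\widetilde{T}_H(x,y) &= \frac{1}{m!}\sum_{\pi \in S_m} x^{\ia(\pi)} y^{\ea(\pi)} \\
&= \frac{1}{m!} \binom{m}{p} \sum_{\sigma_1 \in S_p} \sum_{\sigma_2 \in S_q} x^{\ia(\sigma_1)+\ia(\sigma_2)} y^{\ea(\sigma_1)+\ea(\sigma_2)} \\
&= \frac{1}{p!}\sum_{\sigma_1 \in S_p} x^{\ia(\sigma_1)} y^{\ea(\sigma_1)} \cdot \frac{1}{q!}\sum_{\sigma_2 \in S_q} x^{\ia(\sigma_2)} y^{\ea(\sigma_2)} \\
&= \widetilde{T}_{H_1}(x,y)\,\widetilde{T}_{H_2}(x,y).
\end{align*}

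For the two special cases, take $H_2$ to be the single vertex $v$ with no edges. If $v \in A$, then $v$ is internally active under every permutation of a one-element set (the max over the empty neighborhood is $-\infty$), so $\widetilde{T}_{H_2}(x,y) = x$; if $v \in B$ it is externally active, giving $\widetilde{T}_{H_2}(x,y) = y$. Substituting into the displayed identity yields $\widetilde{T}_H(x,y) = x\,\widetilde{T}_{H-v}(x,y)$ and $\widetilde{T}_H(x,y) = y\,\widetilde{T}_{H-v}(x,y)$ respectively.

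The only real point requiring care—the ``main obstacle,'' though it is minor—is verifying the additivity $\ia(\pi) = \ia(\sigma_1) + \ia(\sigma_2)$ and its external counterpart, i.e.\ that passing to the order-isomorphic restriction does not change which vertices are active. This is immediate once one notes that for $u$ in, say, $V(H_1)$, the inequality $\pi(u) > \max_{j \in N_H(u)} \pi(j)$ involves only values attached to $V(H_1)$, and order-isomorphisms preserve all such inequalities (including the empty-neighborhood convention). An alternative, perhaps cleaner, phrasing of the whole argument uses the i.i.d.\ model mentioned in the introduction: generate $\pi$ by assigning independent uniform $[0,1]$ labels to the $m$ vertices; then the label-configurations on $V(H_1)$ and $V(H_2)$ are independent, activity on each side depends only on the corresponding configuration, so $x^{\ia}y^{\ea}$ factors as a product of independent random variables and the expectation factors accordingly.
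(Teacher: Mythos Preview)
Your proof is correct and follows essentially the same approach as the paper: both arguments rest on the observation that activity of a vertex depends only on the relative order of $\pi$ within its component, and both count that the map from $\pi\in S_m$ to the pair of induced orderings on $V(H_1)$ and $V(H_2)$ is $\binom{m}{p}$-to-one. The only cosmetic difference is that the paper phrases the computation at the level of the coefficients $t_{i,j}$ while you work directly with the generating function; your aside about the i.i.d.\ $[0,1]$-label model is not used in the paper's proof of this lemma but appears later in the paper.
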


\begin{proof} Assume that $H$ has $m$ vertices and let $\widetilde{T}_H(x,y)=\sum_{i,j}t_{i,j}(H)x^iy^j$.
Let us consider the number of permutations of $V(H)$ such that there are $i$ internally active vertices in $A$ and $j$ externally active vertices in $B$. By the above notation the number of these permutations is $m!t_{i,j}(H)$. We sort these permutations by looking at the number of internally and externally active vertices that are in $H_1$, and the number of internally and externally active vertices that are in $H_2.$ The number of permutations $\pi \in S_m$ such that $\ia_{H_1}(\pi) = i_1, \ia_{H_2}(\pi )= i_2, \ea_{H_1}(\pi )=j_1$ and $\ea_{H_2}(\pi )=j_2$ is
$$
m_1! t_{i_1,j_1} (H_1) m_2! t_{i_2,j_2}(H_2) \frac{m!}{m_1! m_2!},
$$ where $m_1 = v(H_1)$ and $m_2 = v(H_2)$. This holds, as to get such a permutation, we first have to decide which values will be assigned to the vertices of $H_1$ and which to $H_2$, and then assign these two sets of values to the two vertex sets independently, in a way such that in $H_k$ we have $i_k$ internally and $j_k$ externally active vertices $(k=1,2)$. Note that a set of $m_k$ distinct numbers can be identified with the set $[m_k]$, so this assignment within each subgraph can be viewed as a permutation of its vertex set. Thus we get
$$
m! t_{i,j}(H) = m! \sum_{\substack{i_1, i_2, j_1, j_2 \\ i_1+i_2=i \\ j_1+j_2=j}} t_{i_1,j_1}(H_1) t_{i_2,j_2}(H_2).
$$
Lemma \ref{comp} now follows from the definition of the product of two polynomials.
\end{proof}

The next lemma is the analogue of the property $T_M(x,y)=T_{M^*}(y,x)$ of the Tutte polynomial, where $M^*$ is the dual matroid of $M$.

\begin{Lemma} \label{csere}
For a bipartite graph $H=(A,B,E)$, let $H'=(B,A,E)$ be the graph obtained by switching the two sides of $H$. Then
$$
\widetilde{T}_H(x,y) = \widetilde{T}_{H'}(y,x).
$$
\end{Lemma}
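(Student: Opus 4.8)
The proof is essentially immediate from the definition once we track how switching the two sides affects internal and external activity.

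The plan is to observe that Definition~\ref{main-def} treats the two bipartite classes almost symmetrically: a vertex $i \in A$ is internally active for $\pi$ exactly when $\pi(i) > \max_{j \in N_H(i)} \pi(j)$, and a vertex $j \in B$ is externally active exactly when $\pi(j) > \max_{i \in N_H(j)} \pi(i)$ — the \emph{same} condition, differing only in which side the vertex lives on. Since $H' = (B, A, E)$ has the same underlying graph, the same vertex set $[m]$, and the same neighbourhoods $N_{H'}(v) = N_H(v)$ for every vertex $v$, the condition "$\pi(v) > \max_{w \in N_{H'}(v)} \pi(w)$" is literally unchanged. What changes is the bookkeeping: a vertex of $A$ that was counted toward $\ia$ in $H$ is a vertex of $B$ in $H'$, so it is now counted toward $\ea$; and vice versa. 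Hence for every permutation $\pi \in S_m$ we have $\ia_{H'}(\pi) = \ea_H(\pi)$ and $\ea_{H'}(\pi) = \ia_H(\pi)$.

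From this the identity follows by a one-line computation:
$$
\widetilde{T}_{H'}(y,x) = \frac{1}{m!}\sum_{\pi \in S_m} y^{\ia_{H'}(\pi)} x^{\ea_{H'}(\pi)} = \frac{1}{m!}\sum_{\pi \in S_m} y^{\ea_H(\pi)} x^{\ia_H(\pi)} = \widetilde{T}_H(x,y).
$$
Equivalently, at the level of coefficients this says $t_{i,j}(H') = t_{j,i}(H)$.

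There is no real obstacle here; the only thing to be careful about is making sure that "internally active" and "externally active" are genuinely defined by the identical inequality on both sides (so that the roles truly just swap), and that passing from $H$ to $H'$ does not alter the vertex labelling $V(H) = [m]$ or the adjacency relation — both of which are clear from the setup. So the write-up is just the two observations above followed by the displayed computation.
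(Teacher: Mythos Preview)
Your proof is correct and follows exactly the same approach as the paper: both arguments observe that for every $\pi \in S_m$ one has $\ia_{H'}(\pi) = \ea_H(\pi)$ and $\ea_{H'}(\pi) = \ia_H(\pi)$, and then conclude by the obvious one-line substitution into the definition of $\widetilde{T}$. Your write-up is a bit more expansive in justifying the swap of activities, but the content is identical.
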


\begin{proof}

For any $\pi\in S_m$, we have $\ia_H(\pi)=\ea_{H'}(\pi)$ and $\ea_H(\pi)=\ia_{H'}(\pi)$, so

$$\widetilde{T}_H(x,y)=\frac{1}{m!}\sum_{\pi \in S_m}x^{\ia_H(\pi)}y^{\ea_H(\pi)}=\frac{1}{m!}\sum_{\pi \in S_m}x^{\ea_{H'}(\pi)}y^{\ia_{H'}(\pi)}=\widetilde{T}_{H'}(y,x).$$

\end{proof}

\begin{Lemma} \label{rek}
If $H$ is a bipartite graph on $m$ vertices that does not contain isolated vertices, then
$$\widetilde{T}_H(x,y)=\frac{1}{m}\sum_{v\in V(H)}\widetilde{T}_{H-v}(x,y).$$
\end{Lemma}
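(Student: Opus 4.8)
The plan is to prove the identity by a direct double-counting argument at the level of permutations, exploiting the fact that deleting one vertex from $H$ and then choosing a permutation of the remaining $m-1$ vertices corresponds, after re-indexing, to choosing a uniformly random permutation of $[m-1]$. Concretely, I would start from the right-hand side and write
$$\frac{1}{m}\sum_{v\in V(H)}\widetilde{T}_{H-v}(x,y)=\frac{1}{m}\sum_{v\in V(H)}\frac{1}{(m-1)!}\sum_{\sigma\in S_{V(H)\setminus\{v\}}}x^{\ia_{H-v}(\sigma)}y^{\ea_{H-v}(\sigma)},$$
where $S_{V(H)\setminus\{v\}}$ denotes bijections $V(H)\setminus\{v\}\to[m-1]$. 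The key observation is a bijective correspondence between pairs $(\pi,w)$, where $\pi\in S_m$ is a permutation of $V(H)$ and $w\in V(H)$, and pairs $(v,\sigma)$ obtained by letting $v$ be the vertex with $\pi(v)=m$ is tempting but slightly off; instead the cleaner correspondence is: given $\pi\in S_m$, for each $v\in V(H)$ let $\sigma_v$ be the permutation of $V(H)\setminus\{v\}$ onto $[m-1]$ obtained from $\pi$ by restriction and then relabelling (i.e. order-isomorphism). Every $(v,\sigma)$ with $\sigma\in S_{V(H)\setminus\{v\}}$ arises from exactly $m$ permutations $\pi$ (insert the value for $v$ into any of the $m$ slots). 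So it suffices to compare $x^{\ia_H(\pi)}y^{\ea_H(\pi)}$ summed over $\pi$ with $\sum_{v}x^{\ia_{H-v}(\sigma_v)}y^{\ea_{H-v}(\sigma_v)}$ summed over $\pi$.

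The heart of the matter is then a purely local, per-permutation identity: for a fixed $\pi\in S_m$,
$$\sum_{v\in V(H)}\bigl(\text{monomial of }\sigma_v\text{ in }H-v\bigr)=m\cdot x^{\ia_H(\pi)}y^{\ea_H(\pi)}$$
is \emph{not} literally true, so the right formulation is to sum the left-hand side of Lemma~\ref{rek} the same way and check the coefficient-by-coefficient match; that is, I would prove
$$\sum_{v\in V(H)}x^{\ia_{H-v}(\sigma_v)}y^{\ea_{H-v}(\sigma_v)}=\sum_{v\in V(H)}x^{\ia_H(\pi)-[\,v\text{ is internally active in }H\text{ under }\pi\,]}y^{\ea_H(\pi)-[\,v\text{ is externally active}\,]}\cdot(\dots).$$
To make this tractable I would instead argue as follows: fix $\pi$, and note that for a vertex $v$ and any other vertex $u\neq v$, whether $u$ is internally/externally active in $H-v$ under $\sigma_v$ is \emph{the same} as whether it is active in $H$ under $\pi$, \emph{except} possibly when $v$ is the unique neighbour of $u$ achieving the maximum $\pi$-value among $N_H(u)$ — but relabelling preserves all order relations among the surviving vertices, and removing $v$ can only \emph{help} $u$ become active, never hurt it. This monotonicity, combined with tracking exactly the vertices $v$ whose removal changes the status of some $u$, is where the real work is. The cleanest route is probably to observe that $u$'s activity status in $H-v$ equals its status in $H$ unless $v$ is precisely the (unique) element of $N_H(u)$ with the largest $\pi$-value and that value exceeds $\pi(u)$; when we delete that $v$, vertex $u$ flips from inactive to active. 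So for each $\pi$, summing over $v$ we get $\sum_v(\text{monomial of }H-v)=\sum_u(\text{contribution})$, and careful bookkeeping should collapse to $m$ times the $H$-monomial of $\pi$ after dividing appropriately.

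Given the delicacy above, I suspect the slick proof the authors intend is different and shorter: use the probabilistic model. Generate $\pi$ by i.i.d. uniform reals $X_1,\dots,X_m$ on $[0,1]$ and rank them. Then choosing a uniformly random vertex $v\in V(H)$ independently and conditioning, the vector $(X_u)_{u\neq v}$ is again i.i.d. uniform and induces a uniform permutation of $V(H)\setminus\{v\}$; moreover the events "$u$ internally active in $H-v$'' are just the original activity events with $v$'s coordinate integrated out. One then computes $\mathbb{E}[x^{\ia}y^{\ea}]$ for $H$ versus the average over $v$ for $H-v$ and shows they agree by a symmetry/exchangeability argument — essentially, $\widetilde{T}_H(x,y)=\mathbb{E}_\pi[\cdots]$ and averaging over which single label to "forget'' does not change the expectation because the labels are exchangeable. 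The main obstacle, in either approach, is handling the fact that activity of a vertex $u$ in $H$ versus in $H-v$ can genuinely differ, so one cannot just say "the monomial is unchanged''; the identity only holds \emph{after} summing over all $v$, and making that cancellation transparent — rather than a messy inclusion–exclusion — is the crux. I would aim to present the exchangeability argument, since it sidesteps the casework: identify $\widetilde{T}_{H-v}$ with a conditional expectation and use that a uniform random permutation of $[m]$ restricted to the complement of a uniformly chosen position is a uniform random permutation of $[m-1]$, together with the observation that the joint law of the relevant max-comparisons is unchanged.
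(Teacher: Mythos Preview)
Your proposal does not contain a proof; it is a sequence of attempts, each of which you yourself flag as incomplete or problematic, and none of which you carry through. The difficulty you correctly identify---that deleting a vertex $v$ can change the activity status of its neighbours, so the monomials do not match term by term---is real, and neither your double-counting sketch nor your exchangeability sketch resolves it. In the exchangeability version you assert that ``the joint law of the relevant max-comparisons is unchanged,'' but this is precisely what fails: whether $u$ is active in $H-v$ is determined by $\max_{w\in N_H(u)\setminus\{v\}}\pi(w)$, not by $\max_{w\in N_H(u)}\pi(w)$, and these differ exactly in the cases you worried about.

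The paper sidesteps all of this with one clean idea you missed: rather than deleting an arbitrary vertex and summing, delete the vertex $v(\pi)$ with $\pi(v(\pi))=1$, i.e.\ the vertex receiving the \emph{smallest} label. Two things then happen for free. First, $v(\pi)$ is never active, because it has a neighbour (no isolated vertices) and that neighbour has a larger label; so removing $v(\pi)$ loses nothing. Second, removing $v(\pi)$ cannot change the activity status of any other vertex $u$, because the vertex with label $1$ is never the maximiser over $N_H(u)$. Hence $\ia_H(\pi)=\ia_{H-v(\pi)}(\alpha)$ and $\ea_H(\pi)=\ea_{H-v(\pi)}(\alpha)$, where $\alpha$ is the order-isomorphic relabelling of $\pi$ restricted to $V(H)\setminus\{v(\pi)\}$. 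The map $\pi\mapsto(v(\pi),\alpha)$ is a bijection from $S_m$ to $\bigcup_{v}\mathrm{Sym}(V(H)\setminus\{v\})$, and summing the (now identical) monomials over this bijection gives the identity immediately---no cancellation, no bookkeeping.
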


\begin{proof}
For $\pi\in S_m$, let $v(\pi)$ be the vertex of $H$ such that $\pi(v(\pi))=1$. Let $\alpha(\pi)$ be the permutation of $V(H-v(\pi))$ where $\alpha(x)<\alpha(y)$ iff $\pi(x)<\pi(y)$. Then a vertex is internally (externally) active in $\alpha$ if and only if it is internally (externally) active in $\pi$, since $v(\pi)$ cannot be active as $v(\pi)$ is not isolated. Therefore $\ia_H(\pi)=\ia_{H-v(\pi)}(\alpha(\pi))$ and $\ea_H(\pi)=\ea_{H-v(\pi)}(\alpha(\pi))$. As $\pi$ runs through $S_m$, we remove each vertex $v\in V(H)$ exactly $(m-1)!$ times and get each permutation $\alpha$ of $\mathrm{Sym}\left([m]\backslash\{v\}\right)$ exactly once, so

\begin{align*}
    \widetilde{T}_H(x,y)&=\frac{1}{m!}\sum_{\pi \in S_m}x^{\ia_H(\pi)}y^{\ea_H(\pi)}\\
    &=\frac{1}{m!}\sum_{\pi \in S_m}x^{\ia_{H-v(\pi)}(\alpha(\pi))}y^{\ea_{H-v(\pi)}(\alpha(\pi))}\\
    &=\frac{1}{m!}\sum_{v\in V(H)}\sum_{\alpha \in Sym\left([m]\backslash\{v\}\right)}x^{\ia_{H-v}(\alpha)}y^{\ea_{H-v}(\alpha)}\\
    &=\frac{1}{m}\sum_{v\in V(H)}\widetilde{T}_{H-v}(x,y).
\end{align*}

\end{proof}

\section{Connection with the Tutte polynomial} \label{sect: connection}

In this section, we establish the main connection between the Tutte polynomial and the permutation Tutte polynomial. This connection will enable us to transfer linear identities and inequalities from the permutation Tutte polynomial to the Tutte polynomial.

\begin{Lemma}\label{conn}
Let $G$ be a graph. For each spanning tree $T$ of $G$, let $H[T]$ be the local basis exchange graph with respect to $T$. Then
$$T_G(x,y)=\sum_{T\in \mathcal{T}(G)}\widetilde{T}_{H[T]}(x,y),$$
where the sum is over the set of spanning trees $\mathcal{T}(G)$ of $G$.
\end{Lemma}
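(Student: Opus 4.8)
The plan is to reduce the identity to the classical activity characterization of the Tutte polynomial (Theorem~\ref{ia-ea-characterization}) via an averaging argument. Fix a connected graph $G$ with $m$ edges (the general case follows by taking products over connected components, using Lemma~\ref{comp} together with the multiplicativity of $T_G$ under disjoint unions). The key observation is that a labelling of the edges of $G$ by $[m]$ is the same thing as a permutation $\sigma \in S_m$ of the edge set, and for each spanning tree $T$, the internally active edges of $G$ with respect to $T$ under the labelling $\sigma$ are exactly the internally active vertices of $H[T]$ under the induced permutation of $V(H[T]) = E(G)$; likewise for external activity. This is precisely the compatibility of the two notions of activity pointed out in the discussion preceding the lemma, and it holds because the cut determined by $e \in T$ in $G$ corresponds to the neighbourhood of $e$ in $H[T]$, and the cycle determined by $f \notin T$ corresponds to the neighbourhood of $f$ in $H[T]$.

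With that in hand I would argue as follows. By Theorem~\ref{ia-ea-characterization}, for every fixed labelling $\sigma \in S_m$ we have
$$T_G(x,y) = \sum_{T \in \mathcal{T}(G)} x^{\ia_\sigma(T)} y^{\ea_\sigma(T)},$$
where $\ia_\sigma(T), \ea_\sigma(T)$ denote the internal and external activity with respect to the labelling $\sigma$. Since the left-hand side does not depend on $\sigma$, I may average over all $\sigma \in S_m$:
$$T_G(x,y) = \frac{1}{m!} \sum_{\sigma \in S_m} \sum_{T \in \mathcal{T}(G)} x^{\ia_\sigma(T)} y^{\ea_\sigma(T)} = \sum_{T \in \mathcal{T}(G)} \frac{1}{m!} \sum_{\sigma \in S_m} x^{\ia_\sigma(T)} y^{\ea_\sigma(T)}.$$
Now fix $T$ and examine the inner sum. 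As $\sigma$ ranges over $S_m$, the induced permutation of $V(H[T])$ also ranges over all of $S_m$ (the identification of $E(G)$ with $V(H[T])$ is a fixed bijection), and by the compatibility of activities, $\ia_\sigma(T) = \ia_{H[T]}(\pi_\sigma)$ and $\ea_\sigma(T) = \ea_{H[T]}(\pi_\sigma)$ where $\pi_\sigma$ is that induced permutation. Hence the inner sum is exactly $\widetilde{T}_{H[T]}(x,y)$ by Definition~\ref{main-def}, and summing over $T$ gives the claimed identity.

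The main thing to be careful about — the only real obstacle — is the compatibility claim itself: one must check that the cut of $G$ associated to a tree edge $e$, as a set of edges, is carried by the identification $E(G) \leftrightarrow V(H[T])$ to the set $\{e\} \cup N_{H[T]}(e)$, and symmetrically for the fundamental cycle of a non-tree edge $f$. This is really the content of the definition of $H[T]$ (an edge $e \in T$ is joined to $f \notin T$ precisely when $f$ lies in the fundamental cut of $e$, equivalently $e$ lies in the fundamental cycle of $f$), so the verification is essentially a matter of unwinding definitions; I would state it cleanly and refer to it, since it was already flagged in the text. A secondary, purely bookkeeping point is the reduction from connected $G$ to arbitrary $G$ and the matroid generalization, but both are immediate from multiplicativity (Lemma~\ref{comp}) and the standard fact that $T_G$ factors over blocks/components, so I would dispatch them in a sentence.
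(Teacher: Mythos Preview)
Your proof is correct and follows essentially the same averaging argument as the paper: fix a labelling, apply Theorem~\ref{ia-ea-characterization}, average over all $m!$ labellings, swap the order of summation, and identify the inner sum with $\widetilde{T}_{H[T]}(x,y)$ via the compatibility of activities. Your version is slightly more explicit about the compatibility check and the reduction to connected $G$, but the underlying strategy is identical.
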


\begin{proof}
For a fixed spanning tree $T$ and a permutation $\pi$ of the edges, the internally and externally active edges correspond to the internally and externally active vertices of $H[T]$. Hence
$$T_G(x,y)=\sum_{T\in \mathcal{T}(G)}x^{\ia_{H[T]}(\pi)}y^{\ea_{H[T]}(\pi)}.$$
Now averaging it for all permutations $\pi \in S_m$ we get that
\begin{align*}
T_G(x,y)&=\frac{1}{m!}\sum_{\pi \in S_m}T_G(x,y)\\
&=\frac{1}{m!}\sum_{\pi \in S_m}\sum_{T\in \mathcal{T}(G)}x^{\ia_{H[T]}(\pi)}y^{\ea_{H[T]}(\pi)}\\
&=\sum_{T\in \mathcal{T}(G)}\frac{1}{m!}\sum_{\pi \in S_m}x^{\ia_{H[T]}(\pi)}y^{\ea_{H[T]}(\pi)}\\
&=\sum_{T\in \mathcal{T}(G)}\widetilde{T}_{H[T]}(x,y).
\end{align*}

\end{proof}

\begin{Rem}
The local basis exchange graph $H[T]$ has an isolated vertex if and only if $G$ contains a bridge or a loop. Furthermore, $H[T]$ is connected if and only $G$ is $2$-connected.
\end{Rem}

The following lemma enables us to study Conde-Merino-Welsh type inequalities.

\begin{Lemma}[Transfer lemma] \label{quadratic-connection}
Let $x_0,x_1,x_2,y_0,y_1,y_2\geq 0$. Suppose that for any bipartite graph $H$, we have
$$\widetilde{T}_{H}(x_1,y_1)\widetilde{T}_{H}(x_2,y_2)\geq \widetilde{T}_{H}(x_0,y_0)^2.$$
Then for any graph $G$, we have
$$T_{G}(x_1,y_1)T_{G}(x_2,y_2)\geq T_G(x_0,y_0)^2.$$
More generally, if for $x_0,x_1,\dots ,x_n,y_0,y_1,\dots ,y_n\geq 0$ and $\alpha_1,\dots ,\alpha_n\geq 0$ satisfying $\sum_{k=1}^n\alpha_k=1$, the inequality
$$\prod_{k=1}^n\widetilde{T}_{H}(x_k,y_k)^{\alpha_k}\geq \widetilde{T}_{H}(x_0,y_0)$$
holds true for every bipartite graph $H$, then for every graph $G$, we have
$$\prod_{k=1}^nT_{G}(x_k,y_k)^{\alpha_k}\geq T_{G}(x_0,y_0).$$
\end{Lemma}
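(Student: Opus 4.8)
The plan is to use Lemma~\ref{conn}, which expresses $T_G(x,y)=\sum_{T\in\mathcal{T}(G)}\widetilde{T}_{H[T]}(x,y)$, together with a convexity/H\"older-type argument applied termwise to the spanning-tree decomposition. The point is that the hypothesis gives a pointwise (per bipartite graph) inequality of the form $\prod_k f_k^{\alpha_k}\ge f_0$ where $f_k = \widetilde{T}_{H[T]}(x_k,y_k)$, and we want to sum over $T$ and preserve the inequality. This is exactly the setting of the (reverse) Minkowski / superadditivity of geometric means, i.e.\ $\sum_T \prod_k f_k(T)^{\alpha_k} \le \prod_k\bigl(\sum_T f_k(T)\bigr)^{\alpha_k}$ when $\sum_k\alpha_k=1$ and all quantities are nonnegative. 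This last inequality is precisely the generalized H\"older inequality (with counting measure over $\mathcal{T}(G)$), so it requires no new idea beyond invoking H\"older.

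Here are the steps in order. First I would reduce the two-term product statement to the weighted general statement with $n=2$, $\alpha_1=\alpha_2=1/2$, $x_0^2$ replaced appropriately — more precisely, observe that $\widetilde{T}_H(x_1,y_1)\widetilde{T}_H(x_2,y_2)\ge\widetilde{T}_H(x_0,y_0)^2$ is equivalent to $\widetilde{T}_H(x_1,y_1)^{1/2}\widetilde{T}_H(x_2,y_2)^{1/2}\ge\widetilde{T}_H(x_0,y_0)$, which is the special case of the general hypothesis with $n=2$, $\alpha_1=\alpha_2=\tfrac12$. So it suffices to prove the general ``$\prod$'' version, and then the quadratic version follows by squaring.

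Second, fix a graph $G$ and write, for each spanning tree $T\in\mathcal{T}(G)$ and each $k\in\{0,1,\dots,n\}$, $a_k(T):=\widetilde{T}_{H[T]}(x_k,y_k)\ge 0$ (nonnegativity holds since $x_k,y_k\ge 0$ and the coefficients $t_{i,j}(H)$ are nonnegative by definition). The hypothesis, applied to each bipartite graph $H[T]$, gives $\prod_{k=1}^n a_k(T)^{\alpha_k}\ge a_0(T)$ for every $T$. Summing over $T$ and then applying the generalized H\"older inequality $\sum_T\prod_{k=1}^n a_k(T)^{\alpha_k}\le\prod_{k=1}^n\bigl(\sum_T a_k(T)\bigr)^{\alpha_k}$ yields
$$
T_G(x_0,y_0)=\sum_{T\in\mathcal{T}(G)}a_0(T)\le\sum_{T\in\mathcal{T}(G)}\prod_{k=1}^n a_k(T)^{\alpha_k}\le\prod_{k=1}^n\Bigl(\sum_{T\in\mathcal{T}(G)}a_k(T)\Bigr)^{\alpha_k}=\prod_{k=1}^n T_G(x_k,y_k)^{\alpha_k},
$$
using Lemma~\ref{conn} at both ends. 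For the quadratic conclusion, take $n=2$, $\alpha_1=\alpha_2=\tfrac12$ and square; this gives $T_G(x_1,y_1)T_G(x_2,y_2)\ge T_G(x_0,y_0)^2$.

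The only mild subtlety — and the step I would be most careful about — is the edge case where $\mathcal{T}(G)$ is empty or $G$ has no edges, or where some $\sum_T a_k(T)=0$; in those degenerate situations every Tutte polynomial value involved is $0$ or $1$ and the inequality holds trivially, but one should dispatch them first so that the H\"older step is applied only when all the sums are positive. The same remark applies verbatim to matroids, replacing $\mathcal{T}(G)$ by $\mathcal{B}(M)$ and $H[T]$ by the local basis exchange graph of the basis, since Lemma~\ref{conn} holds in that generality. I expect no real obstacle here; the whole content is the observation that Lemma~\ref{conn} turns a product-type inequality on the $\widetilde{T}$ level into one on the $T$ level via H\"older.
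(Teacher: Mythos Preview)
Your proposal is correct and follows essentially the same approach as the paper: both use Lemma~\ref{conn} to decompose $T_G$ as a sum over spanning trees, apply the hypothesis termwise, and then invoke H\"older (Cauchy--Schwarz in the $n=2$ case) to pass from the termwise inequality to the sum. The only cosmetic difference is that you prove the general weighted version first and deduce the quadratic case, whereas the paper does the quadratic case directly via Cauchy--Schwarz and then remarks that H\"older handles the general case.
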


\begin{proof}
We have
\begin{align*}
T_G(x_1,y_1)T_G(x_2,y_2)&=\left( \sum_{T\in \mathcal{T}(G)}\widetilde{T}_{H[T]}(x_1,y_1)\right)\left( \sum_{T\in \mathcal{T}(G)}\widetilde{T}_{H[T]}(x_2,y_2) \right)\\
&\geq\left(\sum_{T\in \mathcal{T}(G)}\left(\widetilde{T}_{H[T]}(x_1,y_1)\widetilde{T}_{H[T]}(x_2,y_2)\right)^{1/2}\right)^2\\
&\geq \left(\sum_{T\in \mathcal{T}(G)}\widetilde{T}_{H[T]}(x_0,y_0)\right)^2\\
&=T_G(x_0,y_0)^2.
\end{align*}
The first and last equality are the applications Lemma~\ref{conn}. The first inequality is a Cauchy--Schwarz inequality applied to the numbers $\widetilde{T}_{H[T]}(x_1,y_1)^{1/2},\widetilde{T}_{H[T]}(x_2,y_2)^{1/2}$ for $T\in \mathcal{T}(G)$. This is where we use that $x_1,x_2,x_3,y_1,y_2,y_3\geq 0$ to ensure that we can consider the square roots. The second inequality is simply the condition of the lemma.

The proof of the more general statement follows the same way, the only difference is that Cauchy--Schwarz inequality we have to use the following version of H\"older's inequality:
$$
\prod_{k=1}^n\left( \sum_{j=1}^Ma_{kj}\right)^{\alpha_k}\geq \sum_{j=1}^M\prod_{k=1}^na_{kj}^{\alpha_k}.$$
\end{proof}

\section{Conde-Merino-Welsh type inequalities}
\label{sect: CMW inequalities}

In this section, we study inequalities of type
$$\widetilde{T}_H(x_1,y_1)\widetilde{T}_H(x_2,y_2)\geq \widetilde{T}_H(x_0,y_0)^2$$
and
$$T_G(x_1,y_1)T_G(x_2,y_2)\geq T_G(x_0,y_0)^2.$$
As Lemma~\ref{quadratic-connection} shows, the former inequality implies the latter one. Hereafter we refer to these types of inequalities as Conde-Merino-Welsh type inequalities.

In this section, we collect two simple results. The first one is motivated by a result of Merino, Iba{\~n}ez and Rodr{\'\i}guez \cite{merino2009note} and implies their result by the transfer lemma (Lemma~\ref{quadratic-connection}). The proof is almost the same as their proof.

\begin{Lemma}\label{lem442}
If $H$ does not contain isolated vertices, then
$$\widetilde{T}_H(4,0)\widetilde{T}_H(0,4)\geq \widetilde{T}_H(2,2)^2.$$
\end{Lemma}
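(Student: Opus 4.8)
The plan is to prove the inequality $\widetilde{T}_H(4,0)\widetilde{T}_H(0,4)\geq \widetilde{T}_H(2,2)^2$ by a direct comparison of coefficients, exploiting the fact that setting $y=0$ kills all terms except the pure power of $x$, and symmetrically for $x=0$. Concretely, write $\widetilde{T}_H(x,y)=\sum_{i,j}t_{i,j}x^iy^j$ with $t_{i,j}=t_{i,j}(H)$. Then $\widetilde{T}_H(4,0)=\sum_i t_{i,0}4^i$, $\widetilde{T}_H(0,4)=\sum_j t_{0,j}4^j$, while $\widetilde{T}_H(2,2)=\sum_{i,j}t_{i,j}2^{i+j}$. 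So the target is
\[
\Bigl(\sum_i t_{i,0}4^i\Bigr)\Bigl(\sum_j t_{0,j}4^j\Bigr)\geq \Bigl(\sum_{i,j}t_{i,j}2^{i+j}\Bigr)^2.
\]
First I would observe that, since $H$ has no isolated vertices, a vertex of $A$ and a vertex of $B$ cannot both be active simultaneously under the same permutation unless — wait, that is not quite true in general; rather the key combinatorial fact (which should follow from the no-isolated-vertices hypothesis together with a parity/extremal argument on the globally maximal label) is that $t_{i,j}=0$ whenever $i>0$ \emph{and} $j>0$ fails some condition. Actually the cleanest route, mirroring Merino–Ibáñez–Rodríguez, is: the vertex receiving the largest value $m$ under $\pi$ is always active (it beats all neighbours), and it lies in $A$ or in $B$; after conditioning on which side it lies and contracting, one gets a recursion that lets the proof go by induction on $m=v(H)$.

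The key steps, in order, would be: (1) Set up the induction on $m=v(H)$; the base case ($H$ a single edge, $m=2$) gives $\widetilde{T}_H=xy$ — but that has an isolated-vertex-free graph with $t_{1,1}\neq 0$, so one must be careful — actually a single edge gives $\widetilde{T}_H(x,y)=\frac12(xy+xy)=xy$, and indeed $\widetilde{T}_H(4,0)=0$, $\widetilde{T}_H(2,2)=4$, so the inequality $0\geq 16$ is false! This signals that the statement as I would first guess it must be using a different grouping. Re-examining: for $H$ a single edge with one vertex in each class, $\ia(\pi)$ counts active $A$-vertices and $\ea(\pi)$ active $B$-vertices; the $A$-vertex is active iff its label exceeds the $B$-vertex's, so exactly one of $\ia,\ea$ is $1$ and the other $0$, each with probability $1/2$. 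Hence $\widetilde{T}_H(x,y)=\frac12 x+\frac12 y$, giving $\widetilde{T}_H(4,0)\widetilde{T}_H(0,4)=2\cdot 2=4$ and $\widetilde{T}_H(2,2)=2$, so $4\geq 4$. So the base case is tight, good. (2) For the inductive step, use Lemma~\ref{rek}: $\widetilde{T}_H(x,y)=\frac1m\sum_{v\in V(H)}\widetilde{T}_{H-v}(x,y)$. Apply it to all three evaluations, then apply Cauchy–Schwarz: $\widetilde{T}_H(4,0)\widetilde{T}_H(0,4)=\frac1{m^2}\bigl(\sum_v \widetilde{T}_{H-v}(4,0)\bigr)\bigl(\sum_v \widetilde{T}_{H-v}(0,4)\bigr)\geq \frac1{m^2}\bigl(\sum_v (\widetilde{T}_{H-v}(4,0)\widetilde{T}_{H-v}(0,4))^{1/2}\bigr)^2$. (3) Now I would like to invoke the inductive hypothesis $\widetilde{T}_{H-v}(4,0)\widetilde{T}_{H-v}(0,4)\geq \widetilde{T}_{H-v}(2,2)^2$ for each $v$, which needs $H-v$ to have no isolated vertices — this is the gap: deleting $v$ can create isolated vertices. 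So I would need a separate lemma handling graphs \emph{with} isolated vertices, probably via Lemma~\ref{comp}: if $H$ has an isolated vertex $w\in A$, then $\widetilde{T}_H=x\widetilde{T}_{H-w}$, and one checks $\widetilde{T}_H(4,0)\widetilde{T}_H(0,4)=4\cdot 0\cdot(\dots)$ — again this vanishes! So actually the inequality is simply \emph{false} when there are isolated vertices, which is why the hypothesis is essential, and the delicate point is that the induction must be organized so that the graphs $H-v$ appearing are exactly those with a controlled number of isolated vertices, or one argues directly on coefficients.

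Given these subtleties, the route I would actually commit to is the \emph{direct} one following their claim that ``the proof is almost the same as their proof'' (Merino–Ibáñez–Rodríguez \cite{merino2009note}): expand both sides in terms of the $t_{i,j}$ and prove the coefficient-wise inequality
\[
\Bigl(\sum_{i} t_{i,0}4^i\Bigr)\Bigl(\sum_{j} t_{0,j}4^j\Bigr)\;\geq\;\Bigl(\sum_{i,j} t_{i,j}2^{i+j}\Bigr)^2
\]
by first showing, using the no-isolated-vertices condition, that $t_{i,0}=\Pr[\ia(\pi)=i,\ \ea(\pi)=0]$ and a corresponding statement let one write the probability generating function; then note $\widetilde{T}_H(2,2)=\E[2^{\ia(\pi)+\ea(\pi)}]$ and, because each individual permutation $\pi$ contributes a monomial $x^{\ia}y^{\ea}$ with $\ia,\ea\geq 0$, one can bound $\widetilde{T}_H(2,2)=\E[2^{\ia}\cdot 2^{\ea}]\leq \bigl(\E[4^{\ia}]\bigr)^{1/2}\bigl(\E[4^{\ea}]\bigr)^{1/2}$ by Cauchy–Schwarz on the probability space of permutations; and finally identify $\E[4^{\ia}]=\widetilde{T}_H(4,1)$, not $\widetilde{T}_H(4,0)$ — so one more step is needed to pass from $\widetilde{T}_H(4,1)$ down to $\widetilde{T}_H(4,0)$, which holds because $\ia(\pi)>0$ forces $\ea(\pi)=0$ precisely when... this is the crux. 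The main obstacle, then, is establishing the structural fact that links the ``mixed'' evaluation $\widetilde{T}_H(2,2)$ to the product of the two ``boundary'' evaluations $\widetilde{T}_H(4,0)$ and $\widetilde{T}_H(0,4)$: one must show that whenever a permutation has at least one internally active vertex it cannot simultaneously... no — rather, the right statement is that $\widetilde{T}_H(x,0)$ captures exactly the permutations with $\ea=0$, and $\E[2^{\ia}2^{\ea}\mathbf{1}_{\ea=0}]+\E[2^{\ia}2^{\ea}\mathbf{1}_{\ia=0}]\geq\E[2^{\ia+\ea}]$ when $\{\ia=0\}\cup\{\ea=0\}$ covers the whole space, which \emph{does} hold when $H$ has no isolated vertices because the globally minimal-labelled vertex is inactive but does not by itself force one side empty — so in fact the honest conclusion is that I expect the proof to hinge on a clean inequality at the level of each permutation or each spanning-tree-like block, and I would first try Cauchy–Schwarz over $S_m$ as above and then supply the comparison $\widetilde{T}_H(4,1)\le$ something $\cdot\,\widetilde{T}_H(4,0)$ via an explicit injection argument, treating that injection as the technical heart of the proof.
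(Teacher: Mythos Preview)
Your proposal never lands on a complete argument, and the routes you sketch all hit the same wall. The Cauchy--Schwarz step $\widetilde{T}_H(2,2)=\E[2^{\ia}2^{\ea}]\le(\E[4^{\ia}])^{1/2}(\E[4^{\ea}])^{1/2}$ is correct, but as you yourself note it yields $\widetilde{T}_H(2,2)^2\le \widetilde{T}_H(4,1)\widetilde{T}_H(1,4)$, and since the coefficients $t_{i,j}$ are non-negative we have $\widetilde{T}_H(4,1)\ge \widetilde{T}_H(4,0)$, so any ``injection argument'' to pass from $(4,1)$ to $(4,0)$ would have to go the \emph{wrong way}. The induction via Lemma~\ref{rek} fails for the reason you found (deleting a vertex creates isolated vertices, and the inequality is genuinely false for such graphs), and the vague structural claims about $\{\ia=0\}\cup\{\ea=0\}$ covering $S_m$ are simply false for general $H$ (your own Example~\ref{example} has $t_{1,1}(P_5)>0$).

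The missing idea is that $\widetilde{T}_H(2,2)$ is not merely bounded but \emph{equal} to a single monomial contribution: by Lemma~\ref{22} one has $\widetilde{T}_H(2,2)=\alt(H)\,2^{a+b}$ exactly, where $\alt(H)=t_{a,0}(H)=t_{0,b}(H)$. Once you know this, the proof is two lines: since all $t_{i,j}\ge 0$, keeping only the top term gives $\widetilde{T}_H(4,0)\ge t_{a,0}4^{a}=\alt(H)4^{a}$ and $\widetilde{T}_H(0,4)\ge t_{0,b}4^{b}=\alt(H)4^{b}$, so
\[
\widetilde{T}_H(4,0)\widetilde{T}_H(0,4)\;\ge\;\alt(H)^2\,4^{a+b}\;=\;\bigl(\alt(H)\,2^{a+b}\bigr)^2\;=\;\widetilde{T}_H(2,2)^2.
\]
The no-isolated-vertices hypothesis is used only to ensure $t_{a,0}=t_{0,b}$ (Proposition~\ref{same}), which is what makes $\alt(H)$ well-defined and makes Lemma~\ref{22} apply cleanly.
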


\begin{proof}
By definition, $\widetilde{T}_H(x,y) = \alt(H) x^a + f(x,y) = \alt(H) y^b + g(x,y)$, where the coefficients of $f$ and $g$ are non-negative. This implies that
$$\widetilde{T}_H(4,0) \geq \alt(H) 4^a\ \ \ \mbox{and}\ \ \ \widetilde{T}_H(0,4) \geq \alt(H) 4^b.$$ By Lemma \ref{22}, we have $\widetilde{T}_H(2,2) =\alt(H) 2^{a+b}$, so
$$\widetilde{T}_H(4,0) \widetilde{T}_H(0,4) \geq \alt(H)^2 4^{a+b} = \widetilde{T}_H(2,2)^2.$$
\end{proof}

Our next goal is to show that a complete bipartite graph $H$ satisfies\\ $\widetilde{T}_H(2,0)\widetilde{T}_H(0,2)\geq \widetilde{T}_H(1,1)^2$. Note that $\widetilde{T}_H(1,1)=1$ for every bipartite graph $H$.

\begin{Lemma}\label{lem:complete}
For the complete bipartite graph $K_{a,b}$ with $m=a+b$, we have
$$\widetilde{T}_{K_{a,b}}(x,y)=\sum_{i=1}^a\frac{a(a-1)...(a-i+1)b}{m(m-1)...(m-i)}x^i+\sum_{j=1}^b\frac{b(b-1)...(b-j+1)a}{m(m-1) \dots (m-j)}y^j.$$
\end{Lemma}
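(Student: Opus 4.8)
The plan is to compute $\widetilde{T}_{K_{a,b}}(x,y)$ directly from Definition~\ref{main-def} by analysing which vertices can be internally or externally active. The key observation is that in $K_{a,b}$ every vertex of $A$ is adjacent to every vertex of $B$ and vice versa, so a vertex $i\in A$ is internally active under $\pi$ precisely when $\pi(i)$ exceeds $\pi(j)$ for \emph{all} $j\in B$, i.e. when $\pi(i)$ is larger than the maximum label on $B$. In particular, if $i\in A$ is internally active then $\pi(i)$ exceeds every label in $B$, and the set of internally active vertices in $A$ is exactly the set of vertices in $A$ receiving labels larger than $\max_{j\in B}\pi(j)$; these form a "top block" consisting entirely of $A$-vertices. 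The crucial consequence is that for any permutation, $\ia(\pi)$ and $\ea(\pi)$ cannot both be positive: if some $A$-vertex beats all of $B$ then the overall maximum label lies in $A$, so no $B$-vertex beats all of $A$, and symmetrically. Hence each monomial $x^iy^j$ appearing has $i=0$ or $j=0$, matching the shape of the claimed formula.

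Next I would count, for $i\ge 1$, the number of permutations with $\ia(\pi)=i$ (and hence $\ea(\pi)=0$). This happens exactly when the $i$ largest labels $m, m-1,\dots, m-i+1$ are all assigned to vertices of $A$, and the label $m-i$ is assigned to a vertex of $B$. The number of ways to choose and order which $A$-vertices get the top $i$ labels is $a(a-1)\cdots(a-i+1)$; the number of ways to choose which $B$-vertex gets label $m-i$ is $b$; and the remaining $m-i-1$ labels can be distributed arbitrarily among the remaining $m-i-1$ vertices in $(m-i-1)!$ ways. Dividing by $m!$ gives
$$t_{i,0}(K_{a,b})=\frac{a(a-1)\cdots(a-i+1)\,b\,(m-i-1)!}{m!}=\frac{a(a-1)\cdots(a-i+1)\,b}{m(m-1)\cdots(m-i)},$$
which is the coefficient of $x^i$ in the claimed formula. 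One should check the edge case $i=a$ separately: then label $m-a$ must go to $B$, but in fact every label below the top $a$ necessarily goes to a $B$-vertex (since all $A$-vertices are used up), so the condition "$m-i$ assigned to $B$" is automatic; the formula still gives the right count because $(m-a-1)!/((m-a-1)!)$-type bookkeeping works out, or equivalently because when $i=a$ there is no constraint beyond "top $a$ labels on $A$" and the count is $a!\,(m-a)!/m! = a!\,b!/m!$, which agrees with $\frac{a!\,b}{m(m-1)\cdots(m-a)}=\frac{a!\,b\,(m-a-1)!}{m!}$ since $b\,(m-a-1)! = b! \cdot \frac{(m-a-1)!}{(b-1)!}$... so this bookkeeping is the one spot requiring a little care.

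By the symmetry $\widetilde{T}_{K_{a,b}}(x,y)=\widetilde{T}_{K_{b,a}}(y,x)$ from Lemma~\ref{csere} (switching the roles of $A$ and $B$), the $y^j$ coefficients $t_{0,j}(K_{a,b})$ are obtained from the $x^i$ formula by swapping $a\leftrightarrow b$, giving $t_{0,j}(K_{a,b})=\frac{b(b-1)\cdots(b-j+1)\,a}{m(m-1)\cdots(m-j)}$. Combining the two families and the observation that no mixed monomials occur yields exactly the stated identity. As a sanity check one can verify $\widetilde{T}_{K_{a,b}}(1,1)=1$, i.e. that $\sum_{i=1}^a t_{i,0}+\sum_{j=1}^b t_{0,j}=1$, which amounts to the identity $\sum_{i=1}^a \frac{a^{\underline{i}}b}{m^{\underline{i+1}}}+\sum_{j=1}^b\frac{b^{\underline{j}}a}{m^{\underline{j+1}}}=1$ (here $x^{\underline{k}}$ denotes the falling factorial); this telescopes and provides a useful correctness check. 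The main obstacle, as noted, is purely bookkeeping: getting the boundary term $i=a$ (resp. $j=b$) to conform to the uniform formula, and being careful that the "$m-i$ goes to $B$" event is well-defined (it requires $i\le a$ and there to be at least one label below the top $i$, which holds since $b\ge 1$ when $H$ has no isolated vertices — though for the formula itself one only needs $K_{a,b}$ with $a,b\ge 1$). No deep idea is needed beyond the "top block is monochromatic" structural fact.
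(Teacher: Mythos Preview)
Your proposal is correct and follows essentially the same approach as the paper: both arguments first observe that in $K_{a,b}$ the internally active vertices are exactly those in $A$ whose labels exceed every label in $B$ (and symmetrically), so $\ia(\pi)>0$ and $\ea(\pi)>0$ are mutually exclusive; both then count permutations by the length of the monochromatic ``top block'' of labels. Your write-up is in fact more detailed than the paper's brief sketch. One minor remark: your hand-wringing over the edge case $i=a$ is unnecessary --- the uniform count $a(a-1)\cdots(a-i+1)\cdot b\cdot(m-i-1)!$ evaluates at $i=a$ to $a!\cdot b\cdot(b-1)!=a!\,b!$, which is exactly the number of permutations sending the top $a$ labels to $A$, so no separate argument is needed.
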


\begin{proof}
If $i>0$, which means that there is an internally active vertex $v$ in $A$, then $\pi(v)$ is greater than $\pi(w)$ for every $w \in B$. This means that there cannot be any externally active vertex in $B$, so $j$ must be $0$. Similarly, if $j>0$, then $i=0$. If we want to count the number of permutations $\pi$ such that $\ia(\pi)=0$ and $\ea(\pi)=j$, we have to consider all the permutations such that $\pi^{-1}(1), \pi^{-1}(2), \dots, \pi^{-1}(j)$ are in $B$, but $\pi^{-1}(j+1)$ is in $A$. The case $j=0$ is similar.
\end{proof}

\begin{Prop}
Let $a,b\geq 1$, then
$$\widetilde{T}_{K_{a,b}}(2,0)\widetilde{T}_{K_{a,b}}(0,2)\geq \widetilde{T}_{K_{a,b}}(1,1)^2.$$
\end{Prop}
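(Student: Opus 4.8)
The plan is to use the explicit formula from Lemma~\ref{lem:complete} and reduce the inequality to a tractable form. Write $m=a+b$ and set
$$\widetilde{T}_{K_{a,b}}(2,0)=\sum_{i=1}^a 2^i\,p_i,\qquad \widetilde{T}_{K_{a,b}}(0,2)=\sum_{j=1}^b 2^j\,q_j,$$
where $p_i=\frac{a(a-1)\cdots(a-i+1)b}{m(m-1)\cdots(m-i)}$ and $q_j$ is the same with the roles of $a$ and $b$ swapped. Since $\widetilde{T}_{K_{a,b}}(1,1)=1$, the target inequality is
$$\left(\sum_{i=1}^a 2^i p_i\right)\left(\sum_{j=1}^b 2^j q_j\right)\ge 1.$$

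First I would record a probabilistic reading of the coefficients: $p_i$ is the probability that, among $m$ i.i.d.\ uniform $[0,1]$ variables (one per vertex), the $i$ smallest labels in the natural decreasing order sit on $A$-vertices and then the next one sits on a $B$-vertex — equivalently, $\sum_{i\ge k}p_i$ is the probability that the top $k$ labels all lie in $A$, which is $\frac{a(a-1)\cdots(a-k+1)}{m(m-1)\cdots(m-k+1)}$. The cleaner route, though, is purely algebraic: multiply through by $m(m-1)\cdots 1 = m!$ to clear denominators, so that $m!\,p_i = i!\,\binom{a}{i}\,b\,(m-i-1)!$ and similarly for $q_j$. Then $\widetilde{T}_{K_{a,b}}(2,0)=\frac{1}{m!}\sum_{i=1}^a 2^i\,i!\,\binom{a}{i}\,b\,(m-i-1)!$, and one wants to show the product of the two such sums is at least $(m!)^2$ after suitable normalization. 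A helpful intermediate bound is a clean lower estimate of $\widetilde{T}_{K_{a,b}}(2,0)$: keeping only the top-degree term gives $\widetilde{T}_{K_{a,b}}(2,0)\ge 2^a p_a = 2^a\frac{a!\,b}{m!/(m-a-1)!}= 2^a\frac{a!\,b\,(b-1)!}{m!}=\frac{2^a a!\,b!}{m!}$, and symmetrically $\widetilde{T}_{K_{a,b}}(0,2)\ge \frac{2^b a!\,b!}{m!}$; their product is $\frac{2^{m}(a!\,b!)^2}{(m!)^2}= 2^m\big/\binom{m}{a}^2$. This already suffices when $2^m\ge \binom{m}{a}^2$, i.e.\ for $a,b$ not too unbalanced, but it is too weak in general (e.g.\ $a=b$ large), so I would not rely on it alone.

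The robust approach I would actually push through is induction on $m$ using the deletion recursion. By Lemma~\ref{rek}, $\widetilde{T}_{K_{a,b}}(x,y)=\frac{1}{m}\big(a\,\widetilde{T}_{K_{a-1,b}}(x,y)+b\,\widetilde{T}_{K_{a,b-1}}(x,y)\big)$ (each vertex of $A$ deletes to $K_{a-1,b}$, each of $B$ to $K_{a,b-1}$). Write $U=\widetilde{T}_{K_{a-1,b}}(2,0)$, $U'=\widetilde{T}_{K_{a-1,b}}(0,2)$ and $V=\widetilde{T}_{K_{a,b-1}}(2,0)$, $V'=\widetilde{T}_{K_{a,b-1}}(0,2)$. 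Then the quantity to bound is
$$m^2\,\widetilde{T}_{K_{a,b}}(2,0)\,\widetilde{T}_{K_{a,b}}(0,2)=(aU+bV)(aU'+bV')=a^2UU'+b^2VV'+ab(UV'+U'V).$$
By the inductive hypothesis $UU'\ge 1$ and $VV'\ge 1$, and by AM–GM $UV'+U'V\ge 2\sqrt{UU'VV'}\ge 2$, so the right side is at least $a^2+b^2+2ab=m^2$, which is exactly what we need. The base cases $a=1$ or $b=1$ (where $K_{1,b}$ is a star and the polynomial is easy to compute, or one uses the unbalanced bound above, which is tight there) close the induction. The main obstacle is really just making sure the recursion bottoms out cleanly and that the AM–GM step is applied with the right nonnegativity; once the recursion of Lemma~\ref{rek} is in place, the inequality almost proves itself.
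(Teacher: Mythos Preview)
Your proof is correct and is essentially the same as the paper's: both establish the base case $\min(a,b)=1$ by direct computation on the star, and then induct on $m$ via the deletion recursion of Lemma~\ref{rek}, using Cauchy--Schwarz/AM--GM to combine the inductive hypotheses (your AM--GM bound $UV'+U'V\ge 2\sqrt{UU'VV'}$ is exactly the Cauchy--Schwarz step the paper applies to the two sums). One small caveat: your leading-term bound $2^m/\binom{m}{a}^2$ is \emph{not} sufficient for the base case (at $a=1$, $m=3$ it gives $8/9<1$), so you do need the direct star computation there, as the paper carries out explicitly.
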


\begin{proof}
We first check the statement if $\min(a,b)=1$. We can assume that $a=1,b=m-1$. Then
$$\widetilde{T}_{K_{1,m-1}}(x,y)=\frac{1}{m}\left(y^{m-1}+y^{m-2}+\dots +y+x\right),$$
and so
$$\widetilde{T}_{K_{1,m-1}}(2,0)\widetilde{T}_{K_{1,m-1}}(0,2)=\frac{2}{m}\frac{2^{m-1}+\dots +2}{m}\geq 1=\widetilde{T}_{K_{a,b}}(1,1)^2$$
if $m\geq 2$. Now we prove the statement by induction on $m$. The case $m=2$ is trivial. Suppose that we already know the statement holds till $m-1$. We can assume that $\min(a,b)\geq 2$. Then for $H=K_{a,b}$ we have
\begin{align*}
\widetilde{T}_H(2,0)\widetilde{T}_H(0,2)&=\left(\frac{1}{m}\sum_{v\in V}\widetilde{T}_{H-v}(2,0)\right)\left(\frac{1}{m}\sum_{v\in V}\widetilde{T}_{H-v}(0,2)\right)\\
&\geq \frac{1}{m^2}\left(\sum_{v\in V}\left(\widetilde{T}_{H-v}(2,0)\widetilde{T}_{H-v}(0,2)\right)^{1/2}\right)^2\\
&\geq \frac{1}{m^2}\left(\sum_{v\in V}1\right)^2\\
&=1\\
&=\widetilde{T}_{K_{a,b}}(1,1)^2.\\
\end{align*}
In the first step, we used the recursion formula for $\widetilde{T}_H(x,y)$. In the second step, we used a Cauchy--Schwarz inequality. In the third step, we used that $H-v$ is also a complete bipartite graph without isolated vertices. This completes the induction step and the proof. 
\end{proof}

\begin{Rem} \label{complete-bipartite-linear}
We remark that
$$\widetilde{T}_{K_{r,r}}(2,0)+\widetilde{T}_{K_{r,r}}(0,2)=\widetilde{T}_{K_{r,r}}(2,2)=\alt(K_{r,r})2^{2r}=\frac{2^{2r}}{\binom{2r}{r}}\approx \sqrt{r\pi}.$$
The first equality follows from the fact that $K_{r,r}$ cannot contain active vertices on both sides. The second equality is Lemma~\ref{22}. Using that $\widetilde{T}_{K_{r,r}}(2,0)=\widetilde{T}_{K_{r,r}}(0,2)$ this shows that  
$$\widetilde{T}_{K_{r,r}}(2,0)\widetilde{T}_{K_{r,r}}(0,2)\approx \frac{r\pi}{4}.$$
\end{Rem}

\subsection{Counter-examples.}

The paper \cite{beke2024merino} shows that there are matroids without loops and coloops for which $$T_M(2,0)T_M(0,2)< T_M(1,1)^2.$$
This immediately implies that $\widetilde{T}_H(2,0)\widetilde{T}_H(0,2)
\geq \widetilde{T}_H(1,1)^2$ cannot be true in general. In this section, we construct such bipartite graphs. A historical comment: the counter-example for the Merino--Welsh conjecture grew out from the counter-examples treated in this section.

\begin{Def}
For positive integers $a,b,c$ with $c\leq b$, let $H_{a,b,c}$ be the graph that we obtained from $K_{a,b}=(A,B,E)$ by attaching $c$ pendant vertices to $c$ distinct elements of $B$. So the resulting graph has $a+b+c$ vertices with $a+c$ and $b$ on the different sides.
\end{Def}

\begin{Rem}
In the paper \cite{beke2024merino}, the authors consider the matroid $U^{(2)}_{3k,2k}$, the $2$-thickening of the uniform matroid $U_{3k,2k}$ on $3k$ elements with rank $2k$. For this matroid, the local basis exchange graph is isomorphic to $H_{2k,2k,2k}$ for every basis. 
\end{Rem}

Let 
$$S(a,b,c)=\widetilde{T}_{H_{a,b,c}}(x,y).$$
Then using Lemma~\ref{rek}, we get that
$$S(a,b,c)=\frac{1}{a+b+c}\left(aS(a-1,b,c)+cxS(a,b-1,c-1)+(b-c)S(a,b-1,c)+cS(a,b,c-1)\right).$$
Together with the boundary conditions 
$$S(0,b,c)=\left(\frac{x+y}{2}\right)^{c}y^{b-c},$$
$$S(a,0,c)=x^{a+c},$$
$$S(a,b,0)=\widetilde{T}_{K_{a,b}}(x,y),$$
this function is completely determined and can be computed in a fast way.

For various choices of $(a,b,c)$, we got that 
$$\widetilde{T}_{H_{a,b,c}}(2,0)\widetilde{T}_{H_{a,b,c}}(0,2)<1.$$
For example, if $a=19,b=c=21$, then
$$\widetilde{T}_{H_{a,b,c}}(2,0)=\frac{17823568079808010514820609}{519645565199326904320}\approx 34299.4711654...$$
and 
$$\widetilde{T}_{H_{a,b,c}}(0,2)=\frac{205317845112145723813}{7322325659223715408773120}\approx 0.000028039977278...,$$
and their product is approximately
$0.961756392151...$

Notably, $(a,b,c)=(22,22,22)$ also provides a counter-example to the matroidal version of the Merino--Welsh conjecture.

\begin{Rem}
Let us call a matroid basis-equivalent if all the local basis exchange graphs are isomorphic. Clearly, basis-transitive matroids are such matroids. An interesting question is to determine which bipartite graphs $H$ can be the local basis exchange graph of a basis-equivalent matroid. Since $T_M(x,y)=c\widetilde{T}_H(x,y)$ in this case, then by comparing the coefficients of $x^a$, we get  a very strong necessary condition: $c=T_M(1,1)=\frac{1}{\alt(H)}\in \mathbb{Z}$ and $\frac{1}{\alt(H)}\widetilde{T}_H(x,y)$ has only integer coefficients. 
\end{Rem}

\section{Applications of correlation inequalities}
\label{sect: FKG-inequalities}

In this section, we show the advantage of $\widetilde{T}_H(x,y)$ over $T_G(x,y)$ in proving Conde-Merino-Welsh-type inequalities.

Let us immediately give two inequalities as motivations.

\begin{Lemma} \label{special rectangle}
Let $H$ be an arbitrary bipartite graph. Suppose that $0\leq x\leq 1$ and $y\geq 1$ or $0\leq y\leq 1$ and $x\geq 1$. Then
$$\widetilde{T}_H(x,y)\widetilde{T}_H(1,1)\geq \widetilde{T}_H(x,1)\widetilde{T}_H(1,y).
$$
If both $x,y\geq 1$ or both $0\leq x,y\leq 1$, then
$$\widetilde{T}_H(x,y)\widetilde{T}_H(1,1)\leq \widetilde{T}_H(x,1)\widetilde{T}_H(1,y).
$$
\end{Lemma}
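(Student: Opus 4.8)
The plan is to realize a uniformly random permutation $\pi \in S_m$ by drawing i.i.d.\ uniform random variables $U_1,\dots,U_m$ on $[0,1]$ (indexed by $V(H)$) and letting $\pi$ be the rank order of the $U_i$'s; activity of a vertex depends only on these ranks. For a vertex $i\in A$, write $X_i = X_i(U)$ for the indicator that $i$ is internally active, i.e.\ $U_i > U_j$ for all $j\in N_H(i)$; similarly $Y_j$ for the indicator that $j\in B$ is externally active, i.e.\ $U_j > U_i$ for all $i\in N_H(j)$. Then $\widetilde{T}_H(x,1) = \mathbb{E}\bigl[x^{\sum_{i\in A}X_i}\bigr]$, $\widetilde{T}_H(1,y) = \mathbb{E}\bigl[y^{\sum_{j\in B}Y_j}\bigr]$, $\widetilde{T}_H(x,y) = \mathbb{E}\bigl[x^{\sum X_i} y^{\sum Y_j}\bigr]$, and $\widetilde{T}_H(1,1)=1$. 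So the claimed inequalities are exactly correlation inequalities between the random variables $\Phi := x^{\sum_{i\in A}X_i}$ and $\Psi := y^{\sum_{j\in B}Y_j}$ as functions of $U=(U_1,\dots,U_m)$: the first case asserts $\mathbb{E}[\Phi\Psi]\ge \mathbb{E}[\Phi]\mathbb{E}[\Psi]$ and the second $\mathbb{E}[\Phi\Psi]\le \mathbb{E}[\Phi]\mathbb{E}[\Psi]$.

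Next I would apply Harris' inequality (the FKG inequality for the product probability space $[0,1]^m$ with the coordinatewise partial order): if $f,g:[0,1]^m\to\mathbb{R}$ are both monotone nondecreasing, or both nonincreasing, then $\mathbb{E}[fg]\ge \mathbb{E}[f]\mathbb{E}[g]$; if one is nondecreasing and the other nonincreasing, then $\mathbb{E}[fg]\le \mathbb{E}[f]\mathbb{E}[g]$. The key structural observation is the \emph{opposite monotonicity} of internal and external activity in each coordinate: increasing $U_i$ for a vertex $i\in A$ can only turn $i$ internally active (never off), and can only turn neighbours $j\in N_H(i)\subseteq B$ externally inactive (never on); and $U_i$ does not affect $X_{i'}$ for $i'\ne i$ in $A$ (no edges inside $A$) nor $Y_j$ for $j\notin N_H(i)$. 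Hence $\sum_{i\in A}X_i$ is coordinatewise nondecreasing in the variables $\{U_i : i\in A\}$ and nonincreasing in $\{U_j : j\in B\}$, and symmetrically $\sum_{j\in B}Y_j$ is nondecreasing in $\{U_j : j\in B\}$ and nonincreasing in $\{U_i : i\in A\}$ — so these two sums have exactly opposite monotonicity in every coordinate. Now $\Phi = x^{\sum X_i}$: if $x\ge 1$, $\Phi$ is nondecreasing in $A$-coordinates and nonincreasing in $B$-coordinates; if $0\le x\le 1$, the two monotonicities flip. The same for $\Psi = y^{\sum Y_j}$. Matching up the four sign cases: when $0\le x\le 1, y\ge 1$ (or $x\ge 1, 0\le y\le 1$), $\Phi$ and $\Psi$ turn out to be comonotone (both nondecreasing in, say, the $B$-coordinates and both nonincreasing in the $A$-coordinates, or vice versa), giving $\mathbb{E}[\Phi\Psi]\ge\mathbb{E}[\Phi]\mathbb{E}[\Psi]$; when $x,y\ge 1$ (or both in $[0,1]$), $\Phi$ and $\Psi$ are anti-monotone coordinatewise, giving $\mathbb{E}[\Phi\Psi]\le\mathbb{E}[\Phi]\mathbb{E}[\Psi]$. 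Dividing by $\widetilde{T}_H(1,1)=1$ and substituting the expectations back as values of $\widetilde{T}_H$ yields the two claimed inequalities.

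The main obstacle is purely bookkeeping: verifying that $\Phi$ and $\Psi$ really are monotone (in the same or opposite sense) \emph{in every one of the $m$ coordinates simultaneously}, so that Harris' inequality applies — one must check that no coordinate $U_k$ breaks the pattern. This is where the bipartiteness of $H$ is essential (no $A$–$A$ or $B$–$B$ edges), and it is worth stating cleanly as a sublemma: $\sum_{i\in A}X_i(U)$ and $\sum_{j\in B}Y_j(U)$ have opposite coordinatewise monotonicity on $[0,1]^m$. A secondary technicality is that the indicators are defined by strict inequalities $U_i > U_j$, but ties occur with probability zero, so monotonicity a.e.\ suffices and Harris' inequality is unaffected; alternatively one can keep the discrete model on $S_m$ and invoke the FKG inequality on the lattice of permutations, but the continuous model makes the monotonicity transparent. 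I expect the write-up to be short once the monotonicity sublemma is in place.
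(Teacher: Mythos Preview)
Your proposal is correct and essentially identical to the paper's approach: both express $\widetilde{T}_H(x,y)$ as an expectation over i.i.d.\ uniform variables and apply Harris' inequality after checking the coordinatewise monotonicity of the $A$- and $B$-activity contributions. The only cosmetic difference is that the paper makes the ``opposite monotonicity in $A$- versus $B$-coordinates'' observation explicit by substituting $x_j \mapsto 1 - y_j$ for $j\in B$ at the outset, so that the factors $X_i$ and $Y_j$ become \emph{globally} monotone and Harris applies verbatim; your ``comonotone in every coordinate'' formulation is equivalent once this flip is performed, and you should state that reduction explicitly in the write-up rather than leave it implicit.
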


Note that $\widetilde{T}_H(1,1)=1$, so it appears in the lemma only for aesthetic reasons.

\begin{Lemma} \label{lower-bound}
Let $H$ be an arbitrary bipartite graph, and let $d_i$ be the degree of vertex $i$. Suppose that $0\leq x\leq 1$ and $y\geq 1$ or $0\leq y\leq 1$ and $x\geq 1$. Then
$$\widetilde{T}_H(x,y)\geq \prod_{i\in A}\left(1+\frac{x-1}{d_i+1}\right) \cdot \prod_{j\in B}\left(1+\frac{y-1}{d_j+1}\right).$$
\end{Lemma}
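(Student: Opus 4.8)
My plan is to use the probabilistic model behind $\widetilde{T}_H(x,y)$: generate a uniformly random permutation $\pi$ by sampling i.i.d.\ uniform random variables $U_1,\dots,U_m$ on $[0,1]$ indexed by the vertices, and letting $\pi$ be their rank order. Under this model, a vertex $i\in A$ is internally active iff $U_i > U_j$ for all $j\in N_H(i)$, and similarly for $B$. So writing $X_i = \mathbf{1}[i \text{ internally active}]$ for $i\in A$ and $Y_j = \mathbf{1}[j \text{ externally active}]$ for $j\in B$, we have
$$\widetilde{T}_H(x,y) = \mathbb{E}\!\left[\, x^{\sum_{i\in A} X_i}\, y^{\sum_{j\in B} Y_j}\,\right] = \mathbb{E}\!\left[\prod_{i\in A} x^{X_i} \prod_{j\in B} y^{Y_j}\right].$$
First I would assume WLOG (by Lemma~\ref{csere}) that we are in the case $0\le x\le 1$ and $y\ge 1$. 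The key structural point is monotonicity: each event $\{X_i = 1\}$ is a \emph{decreasing} event in the variable $U_i$ and increasing in the variables $U_j$ for $j\in N_H(i)$; symmetrically each $\{Y_j=1\}$ is increasing in $U_j$ and decreasing in its neighbours' variables.

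The heart of the argument will be an application of Harris' (FKG) inequality, exactly as flagged in the introduction. I want to compare $\mathbb{E}\big[\prod_i x^{X_i}\prod_j y^{Y_j}\big]$ with the product of expectations $\prod_i \mathbb{E}[x^{X_i}]\prod_j \mathbb{E}[y^{Y_j}]$. Since $0\le x\le 1$, the function $x^{X_i} = 1 - (1-x)X_i$ is a decreasing function of $U_i$ and an increasing function of the neighbouring $U_j$'s; since $y\ge 1$, $y^{Y_j} = 1 + (y-1)Y_j$ is increasing in $U_j$ and decreasing in the neighbouring $U_i$'s. To make all factors co-monotone in a single partial order I would flip the coordinates on the $A$-side: replace $U_i$ by $1-U_i$ for $i\in A$ (this leaves the joint law invariant). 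After this substitution, one checks that $x^{X_i}$ becomes an increasing function of the (flipped) coordinate vector and $y^{Y_j}$ becomes an increasing function as well — here one uses that for an edge $ij$ with $i\in A,j\in B$, increasing $1-U_i$ (i.e.\ decreasing $U_i$) makes $i$ more likely active and $j$ less likely active, but the factor $y^{Y_j}$ is decreasing in $Y_j$-promoting directions...

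Let me be more careful about which case of Harris to invoke. In the regime $0\le x\le 1,\ y\ge 1$, I expect the correct comparison to be $\widetilde{T}_H(x,y)\ge \prod_{i\in A}\mathbb{E}[x^{X_i}]\cdot\prod_{j\in B}\mathbb{E}[y^{Y_j}]$ after checking that all the factors, as functions of the flipped coordinates, are simultaneously monotone in the same direction — Harris then gives that their expectation is at least the product of expectations (nonnegative correlation of increasing functions). Granting this, it remains to compute each single-vertex expectation. For $i\in A$ with degree $d_i$, the event $\{X_i=1\}$ says $U_i$ is the maximum among $d_i+1$ i.i.d.\ uniforms, which has probability $\tfrac{1}{d_i+1}$; hence $\mathbb{E}[x^{X_i}] = 1 - (1-x)\tfrac{1}{d_i+1} = 1 + \tfrac{x-1}{d_i+1}$. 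Symmetrically $\mathbb{E}[y^{Y_j}] = 1 + \tfrac{y-1}{d_j+1}$ for $j\in B$. Substituting gives exactly the claimed bound.

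The main obstacle, and the step I would spend the most care on, is verifying the monotonicity/co-monotonicity bookkeeping so that a single clean application of Harris goes through: one must confirm that after flipping the $A$-coordinates, \emph{every} factor $x^{X_i}$ ($i\in A$) and $y^{Y_j}$ ($j\in B$) is a monotone function of the coordinate vector in the \emph{same} direction. A vertex $j\in B$ has neighbours only in $A$, whose coordinates were flipped, so its dependence on those coordinates reverses; and $y^{Y_j}$ is increasing in $Y_j$ while $Y_j$ is decreasing in each neighbour's original $U_i$, i.e.\ increasing in the flipped $1-U_i$ — and $Y_j$ is increasing in $U_j$, which was not flipped — so $y^{Y_j}$ is increasing overall. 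Similarly $x^{X_i} = 1-(1-x)X_i$ with $1-x\ge 0$: $X_i$ is increasing in $U_i$, hence decreasing in $1-U_i$ (the flipped coordinate), so $x^{X_i}$ is increasing in $1-U_i$; and $X_i$ is decreasing in each neighbour $U_j$ (not flipped, $j\in B$), so $x^{X_i}=1-(1-x)X_i$ is increasing in $U_j$. Thus all factors are increasing in the common coordinate system $(1-U_i)_{i\in A},(U_j)_{j\in B}$, and Harris' inequality applies, yielding $\widetilde{T}_H(x,y)\ge\prod_{i\in A}\mathbb{E}[x^{X_i}]\prod_{j\in B}\mathbb{E}[y^{Y_j}]$, which is the desired inequality. (The case $0\le y\le 1,\ x\ge 1$ follows by Lemma~\ref{csere}.)
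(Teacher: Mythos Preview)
Your proof is correct and follows essentially the same approach as the paper's: both express $\widetilde{T}_H(x,y)$ as an expectation of a product of per-vertex factors over i.i.d.\ uniforms, flip the coordinates on one side of the bipartition to make all factors co-monotone, apply Harris' inequality, and then compute the individual expectations $1+\tfrac{x-1}{d_i+1}$ and $1+\tfrac{y-1}{d_j+1}$. The only cosmetic differences are that the paper flips the $B$-side and works in the regime $x\geq 1,\ 0\leq y\leq 1$ (whereas you flip the $A$-side and work in the symmetric regime), and the paper writes the factors directly as $\{x,1\}$- and $\{y,1\}$-valued random variables rather than as $x^{X_i}$ with $X_i$ an indicator.
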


To prove Lemma~\ref{special rectangle} and \ref{lower-bound}, we need the following inequality of Harris that is also a special case of the FKG-inequality \cite{fortuin1971correlation}.

\begin{Lemma}[Harris \cite{harris1960lower}, Fortuin, Kasteleyn, Ginibre \cite{fortuin1971correlation}] \label{FKG-inequality}
Suppose that $\mu$ is the uniform measure on $[0,1]^N$, and $X_1,\dots ,X_t$ are non-negative monotone increasing functions in the sense that if $x_i\geq x_i'$ for $i=1,\dots ,N$, then for $1\leq j\leq t$ we have
$$X_j(x_1,\dots ,x_N)\geq X_j(x_1',\dots ,x_N').$$
Then
$$\E_{\mu}\left[\prod_{j=1}^tX_j\right]\geq \prod_{j=1}^t\E_{\mu}[X_j].$$
Furthermore, if $X$ is monotone increasing and $Y$ is monotone decreasing, then
$$\E[XY]\leq \E[X]\E[Y].$$
\end{Lemma}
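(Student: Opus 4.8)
The plan is to prove the first (Harris/FKG) inequality by a double induction: an outer induction on the number $t$ of functions that reduces everything to the case $t=2$, and, for $t=2$, an inner induction on the dimension $N$ whose base case $N=1$ is the classical Chebyshev-type integral inequality. The second inequality then follows from the first by a one-line reduction.

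First I would establish the base case $N=1$, $t=2$: if $f,g\colon[0,1]\to\mathbb{R}$ are both monotone increasing, then for all $x,y\in[0,1]$ the two factors of $(f(x)-f(y))(g(x)-g(y))$ have the same sign, so this product is non-negative. Integrating over $(x,y)\in[0,1]^2$ and expanding gives $2\int_0^1 fg - 2\left(\int_0^1 f\right)\left(\int_0^1 g\right)\ge 0$, that is, $\E[fg]\ge\E[f]\E[g]$. Next, still for $t=2$, I would induct on $N$. Write a point of $[0,1]^N$ as $(x',s)$ with $x'\in[0,1]^{N-1}$ and $s\in[0,1]$, and for non-negative monotone increasing $X,Y$ set $\widehat X(s)=\E_{x'}[X(x',s)]$ and $\widehat Y(s)=\E_{x'}[Y(x',s)]$. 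For each fixed $s$ the slices $x'\mapsto X(x',s)$ and $x'\mapsto Y(x',s)$ are non-negative and monotone increasing on $[0,1]^{N-1}$, so the induction hypothesis yields $\E_{x'}[X(x',s)Y(x',s)]\ge \widehat X(s)\widehat Y(s)$ pointwise in $s$; moreover $\widehat X,\widehat Y$ are monotone increasing in $s$ because $X,Y$ are increasing in the last coordinate. Hence, using Fubini and then the $N=1$ case applied to $\widehat X,\widehat Y$,
$$\E[XY]=\E_s\E_{x'}[XY]\ge \E_s\!\left[\widehat X(s)\widehat Y(s)\right]\ge \E_s[\widehat X]\,\E_s[\widehat Y]=\E[X]\E[Y].$$

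For general $t$ I would induct on $t$, the case $t=1$ being trivial. Given non-negative monotone increasing $X_1,\dots,X_t$, the product $\prod_{j=1}^{t-1}X_j$ is again non-negative and monotone increasing, so the $t=2$ case gives $\E\!\left[\prod_{j=1}^{t}X_j\right]\ge \E\!\left[\prod_{j=1}^{t-1}X_j\right]\E[X_t]$, and the induction hypothesis bounds the first factor below by $\prod_{j=1}^{t-1}\E[X_j]$, which completes the step. Finally, for the decreasing case I would assume $Y\le C$ for some constant $C$ — harmless, since in all our applications the functions are bounded; then $C-Y\ge 0$ is monotone increasing, so $\E[X(C-Y)]\ge\E[X]\E[C-Y]$, and expanding and cancelling $C\E[X]$ from both sides yields $\E[XY]\le\E[X]\E[Y]$.

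Since this is the classical theorem of Harris, there is no substantive obstacle; the only points deserving a word of care are the measurability of the slice-averages and the applicability of Fubini in the inductive step, together with the boundedness used in the decreasing case (which, if one wanted full generality, could be removed by truncation and monotone convergence). I expect the write-up to be short and entirely routine.
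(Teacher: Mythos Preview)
The paper does not give its own proof of this lemma: it is stated with attribution to Harris and to Fortuin--Kasteleyn--Ginibre and then used as a black box. (There is even a commented-out remark in the source saying exactly that it ``can also be proved by induction on $N$ by repeatedly using the case $t=2$, $N=1$, which is just Chebyshev's inequality''.) So there is nothing in the paper to compare against beyond that hint, and your proposal follows precisely that route.

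Your argument is the standard one and is correct. The Chebyshev base case, the conditioning-on-the-last-coordinate induction on $N$, and the outer induction on $t$ are all fine; the point that $\widehat X,\widehat Y$ remain monotone in $s$ is exactly what makes the induction go through. The reduction of the decreasing case via $C-Y$ is also fine here: in every application in the paper the random variables take only finitely many values (indeed only the values $1$ and $x$, or $1$ and $y$), so boundedness is automatic, and your parenthetical about truncation and monotone convergence is the right fix in general. The only cosmetic remark is that in the $N=1$ Chebyshev step you don't actually need $f,g\ge 0$; non-negativity is only used when you multiply expectations in the $t$-induction and when you pass to products of functions.
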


In what follows, we repeatedly use the same idea to express $\widetilde{T}_H(x,y)$. This is a crucial idea.

 We can create a random ordering of the vertices of $H$ as follows: for each vertex $i$ we choose a uniform random number $x_i$ from the interval $[0,1]$. The numbers $x_i$ then determine an ordering of the edges. The probability that two numbers are equal is $0$.

 \begin{Lemma}\label{vlsz}
Let $H$ be a bipartite graph and let $\widetilde{T}_H(x,y)=\sum t_{i,j}(H)x^iy^j$. Let $v(H)=m$ and let $x_1, x_2, \dots x_m$ be i.i.d. random variables with distribution $x_i\sim U(0,1)$. Let $I(A)=\left|\left\{v\in A |\  x_v\ge x_{v'} \text{ for } v'\in N_H(v)\right\}\right|$ and  $I(B)=\left|\left\{v\in B |\ x_v\ge x_{v'} \text{ for } v'\in N_H(v)\right\}\right|$. Then

$$\mathbb{P}\left(I(A)=i, I(B)=j\right)=t_{i,j}(H).$$
\end{Lemma}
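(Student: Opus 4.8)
The plan is to observe that the i.i.d.\ uniform sample $x_1,\dots,x_m$ almost surely consists of distinct values, and that conditioning on being distinct, the induced ranking is a uniformly random permutation of $[m]$. Concretely, I would define the random permutation $\pi\in S_m$ by letting $\pi(v)$ be the rank of $x_v$ among $x_1,\dots,x_m$ (so $\pi(v)=k$ exactly when $x_v$ is the $k$-th smallest of the sample). Since the $x_i$ are i.i.d.\ from a continuous distribution, $\mathbb{P}(x_v=x_{v'})=0$ for any $v\neq v'$, so $\pi$ is well-defined almost surely; and by symmetry (exchangeability of the $x_i$), the distribution of $\pi$ is uniform on $S_m$.

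Next I would match the two notions of activity. For a vertex $v$ with $N_H(v)\neq\emptyset$, the event ``$x_v\geq x_{v'}$ for all $v'\in N_H(v)$'' coincides, outside the null set where some $x_v=x_{v'}$, with the event ``$\pi(v)>\pi(v')$ for all $v'\in N_H(v)$'', i.e.\ with $\pi(v)>\max_{v'\in N_H(v)}\pi(v')$. (If $v$ is isolated the convention $\max\emptyset=-\infty$ makes $v$ active in both senses; this causes no trouble.) Hence, for $v\in A$ the indicator appearing in the definition of $I(A)$ equals a.s.\ the indicator that $v$ is internally active for $\pi$, and similarly for $v\in B$ and $I(B)$ with external activity. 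Summing over $v$ gives $I(A)=\ia(\pi)$ and $I(B)=\ea(\pi)$ almost surely.

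Finally I would conclude by combining the two observations with Definition~\ref{main-def}: since $\pi$ is uniform on $S_m$,
$$\mathbb{P}\bigl(I(A)=i,\ I(B)=j\bigr)=\mathbb{P}\bigl(\ia(\pi)=i,\ \ea(\pi)=j\bigr)=\frac{\#\{\pi\in S_m:\ \ia(\pi)=i,\ \ea(\pi)=j\}}{m!}=t_{i,j}(H),$$
the last equality being exactly the statement that $\widetilde{T}_H(x,y)=\frac{1}{m!}\sum_{\pi}x^{\ia(\pi)}y^{\ea(\pi)}=\sum_{i,j}t_{i,j}(H)x^iy^j$. There is no serious obstacle here; the only point requiring a line of care is the almost-sure absence of ties and the resulting uniformity of the induced ranking, which is where the hypothesis that the $x_i$ are i.i.d.\ continuous (here $U(0,1)$) is used.
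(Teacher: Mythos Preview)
Your proposal is correct and follows essentially the same approach as the paper: both argue that the i.i.d.\ uniform samples almost surely have no ties, that the induced ordering is a uniformly random permutation, and that activity with respect to the $x_v$'s coincides with activity with respect to that permutation. The paper phrases this via the law of total probability over the events $E_\pi=\{x_{\pi(1)}>x_{\pi(2)}>\cdots>x_{\pi(m)}\}$, while you set up the rank permutation directly; your formulation is slightly cleaner in that the match $I(A)=\ia(\pi)$, $I(B)=\ea(\pi)$ is immediate from your choice of $\pi$.
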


\begin{proof}
For $\pi \in S_m$, let $E_{\pi}$ be the event that $x_{\pi(1)}> x_{\pi(2)}>\dots>x_{\pi(m)}$. Then we have $\mathbb{P}\left(I(A)=i, I(B)=j| E_{\pi}\right) =1$ if $\ia(\pi)=i$ and $\ea(\pi)=j$, otherwise it is $0$, so by the law of total probability we have
\begin{align*}
    \mathbb{P}\left(I(A)=i, I(B)=j\right)&=\sum_{\pi\in S_m}\mathbb{P}\left(I(A)=i, I(B)=j| E_{\pi}\right)\cdot \mathbb{P}\left(E_{\pi}\right)\\
    &=\frac{1}{m!}\sum_{\pi\in S_m}\mathbb{P}\left(I(A)=i, I(B)=j|\  E_{\pi}\right)\\
    &=t_{i,j}(H).
\end{align*}
\end{proof}

In what follows we do a little trick. For $i\in A$ we generate $x_i\sim U(0,1)$ as before, but for $j\in B$  we actually first generate a uniformly random number $y_j$ from $[0,1]$ and let $x_j=1-y_j$. 
The role of this trick will be apparent soon.
 
For $i\in A$, let us introduce the random variable
$$X_{i}(x_i,\{y_j\}_{j\in B})=\left\{ \begin{array}{ll}
x & \mbox{if}\ \max_{j\in N_H(i)}(1-y_j)\leq x_i,\\
1 & \mbox{if}\ \max_{j\in N_H(i)}(1-y_j)> x_i.
\end{array} \right.$$
and for $j\in B$, let
$$Y_{j}(\{x_i\}_{i \in A},y_j)=\left\{ \begin{array}{ll}
y & \mbox{if}\ \max_{i\in N_H(j)}x_i\leq 1-y_j,\\
1 & \mbox{if}\ \max_{i\in N_H(j)}x_i\geq 1-y_i.
\end{array} \right.$$

\begin{Lemma}
(a) We have 
$$\widetilde{T}_H(x,y)=\E\left[ \prod_{i\in A}X_i\cdot \prod_{j\in B}Y_j\right].$$
(b) If $x\geq 1$, then $X_{i}(x_i,\{y_j\}_{j\in B})$ is a monotone increasing function for each $i\in A$. \\
If $0\leq x\leq 1$, then $X_{i}(x_i,\{y_j\}_{j\in B})$ is a monotone decreasing function for each $i\in A$. \\
For $0\leq y\leq 1$ the function $Y_{j}(\{x_i\}_{i \in A},y_j)$ is monotone increasing for each $j\in B$.\\ Finally, for $0\leq y\leq 1$ the function $Y_{j}(\{x_i\}_{i \in A},y_j)$ is monotone decreasing for each $j\in B$.
\end{Lemma}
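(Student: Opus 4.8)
The plan for part (a) is to observe that the substitution $x_j:=1-y_j$ for $j\in B$ does not alter the joint distribution: if $y_j\sim U(0,1)$, then $1-y_j\sim U(0,1)$, and independence is preserved. Hence the family $\{x_i\}_{i\in A}\cup\{x_j\}_{j\in B}$, with $x_j:=1-y_j$, is exactly the i.i.d.\ family of $U(0,1)$ variables appearing in Lemma~\ref{vlsz}. With this identification I would check that for $i\in A$ the condition $\max_{j\in N_H(i)}(1-y_j)\le x_i$ defining $\{X_i=x\}$ is precisely $x_i\ge\max_{j\in N_H(i)}x_j$, i.e.\ that $i$ is internally active in the ordering induced by the $x$-values, and that for $j\in B$ the condition $\max_{i\in N_H(j)}x_i\le 1-y_j$ defining $\{Y_j=y\}$ is precisely $x_j\ge\max_{i\in N_H(j)}x_i$, i.e.\ that $j$ is externally active. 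Since ties among the $x$-values occur with probability $0$, the choice of strict versus non-strict inequalities (and the minor typo in the definition of $Y_j$) is irrelevant almost surely. Consequently $\prod_{i\in A}X_i\cdot\prod_{j\in B}Y_j=x^{I(A)}y^{I(B)}$ pointwise a.s., and taking expectations together with $\mathbb{P}(I(A)=i,I(B)=j)=t_{i,j}(H)$ from Lemma~\ref{vlsz} gives $\E[\prod_{i\in A}X_i\cdot\prod_{j\in B}Y_j]=\sum_{i,j}t_{i,j}(H)x^iy^j=\widetilde{T}_H(x,y)$.

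For part (b) I would write each factor in indicator form: $X_i=1+(x-1)\mathbf{1}[A_i]$ where $A_i=\{x_i\ge\max_{j\in N_H(i)}(1-y_j)\}$, and $Y_j=1+(y-1)\mathbf{1}[B_j]$ where $B_j=\{1-y_j\ge\max_{i\in N_H(j)}x_i\}$. The point of the $1-y_j$ trick is that $A_i$ depends only on the coordinates $x_i$ and $\{y_j\}_{j\in N_H(i)}$ and, as the inequality $x_i\ge 1-\min_{j\in N_H(i)}y_j$ shows, is coordinatewise monotone increasing in every one of these variables; symmetrically $B_j$ depends on $y_j$ and $\{x_i\}_{i\in N_H(j)}$ and is coordinatewise monotone decreasing in every one of these. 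Therefore, if $x\ge1$ the factor $x-1\ge0$ makes $X_i$ monotone increasing, while if $0\le x\le1$ the factor $x-1\le0$ makes $X_i$ monotone decreasing; and if $0\le y\le1$ the factor $y-1\le0$ times the decreasing indicator $\mathbf{1}[B_j]$ makes $Y_j$ monotone increasing, while if $y\ge1$ the factor $y-1\ge0$ makes $Y_j$ monotone decreasing. (The last clause of the statement should read ``for $y\ge1$''; as written it is a typo.)

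There is no serious obstacle here; the only thing requiring care is the bookkeeping around ties and strict versus non-strict inequalities, which is handled by the probability-zero observation above so that the events $A_i,B_j$ agree a.s.\ with the activity events of Definition~\ref{main-def}. The conceptual content of the lemma is precisely that the substitution $x_j=1-y_j$ aligns all the relevant threshold events with a single coordinatewise partial order on $[0,1]^m$, which is exactly what makes Harris' inequality applicable in the proofs of Lemmas~\ref{special rectangle} and~\ref{lower-bound}.
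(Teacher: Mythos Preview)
Your proposal is correct and follows essentially the same approach as the paper: part~(a) is exactly the observation that the $x_j=1-y_j$ substitution reproduces the i.i.d.\ $U(0,1)$ family of Lemma~\ref{vlsz}, and part~(b) is the same monotonicity check, which you package a bit more cleanly via the indicator decomposition $X_i=1+(x-1)\mathbf{1}[A_i]$ (the paper instead argues directly that increasing the coordinates can only turn the inequality $\max_{j\in N_H(i)}(1-y_j)<x_i$ from false to true). You are also right that the final clause of the statement is a typo and should read $y\ge 1$.
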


\begin{Rem}
This lemma is the reason why we generated $x_j$ by $x_j=1-y_j$ for $j\in B$. If we consider the function 
$$X'_{i}(x_i,\{x_j\}_{j\in B})=\left\{ \begin{array}{ll}
x & \mbox{if}\ \max_{j\in N_H(i)}x_j\leq x_i,\\
1 & \mbox{if}\ \max_{j\in N_H(i)}x_j> x_i.
\end{array} \right.$$
instead of $X_i(x_i,\{y_j\}_{j\in B})$, this  would be neither increasing, nor decreasing. 
\end{Rem}

\begin{proof}
Since we simply generated a uniform random ordering of the vertices, we get that
$$\widetilde{T}_H(x,y)=\E\left[ \prod_{i\in A}X_i\cdot \prod_{j\in B}Y_j\right].$$
We only prove the first statement of part (b), the proof of the other claims are analogous.
Observe that if $x_i,\{y_j\}_{j\in B}$ satisfies that $\max_{j\in N_H(i)}(1-y_j)<x_i$, then increasing $x_i,\{y_j\}_{j\in B}$ cannot ruin this inequality. It can occur though that $\max_{j\in N_H(i)}(1-y_j)<x_i$ previously was not true, but after increasing $x_i,\{y_j\}_{j\in B}$ it becomes true. In this case the value of $X_i(x_i,\{y_j\}_{j\in B})$ jumps from $1$ to $x$, that is, since $x\geq 1$, the value of $X_i(x_i,\{y_j\}_{j\in B})$ is increasing. A similar argument proves the other three statements of part (b).
\end{proof}

Now we are ready to prove Lemma~\ref{special rectangle} and \ref{lower-bound}.

\begin{proof}[Proof of Lemma~\ref{special rectangle}]
If $x\geq 1$ and $0\leq y\leq 1$, then
$\prod_{i\in A}X_i$ and $\prod_{j\in B}Y_j$ are both monotone increasing random variables. Hence by the Harris-inequality (Lemma~\ref{FKG-inequality}), we have
$$\widetilde{T}_H(x,y)=\E\left[ \prod_{i\in A}X_i\cdot \prod_{j\in B}Y_j\right]\geq \E\left[\prod_{i\in A}X_i\right] \cdot \E\left[\prod_{j\in B}Y_j\right]=\widetilde{T}_H(x,1)\widetilde{T}_H(1,y).$$
The other inequalities follow the same way.
\end{proof}

\begin{proof}[Proof of Lemma~\ref{lower-bound}]
We have
$$\E[X_i]=\left(1-\frac{1}{d_i+1}\right)+\frac{x}{d_i+1}=1+\frac{x-1}{d_i+1},$$
and
$$\E[Y_j]=\left(1-\frac{1}{d_j+1}\right)+\frac{y}{d_j+1}=1+\frac{y-1}{d_j+1}.$$
Note that $X_i$ and $Y_j$ are 
monotone increasing functions in terms of the variables $\{x_i\}_{i\in A}$ and $\{y_j\}_{j\in B}$ if $x\geq 1$ and $0\leq y\leq 1$, and they are monotone decreasing functions in terms of the variables $\{x_i\}_{i\in A}$ and $\{y_j\}_{j\in B}$ if $0\leq x\leq 1$ and $y\geq 1$.
Hence, by Harris' inequality (Lemma~\ref{FKG-inequality}), we have
$$\widetilde{T}_H(x,y)=\E\left[ \prod_{i\in A}X_i\cdot \prod_{j\in B}Y_j\right]\geq \prod_{i\in A}\E[X_i] \cdot \prod_{j\in B}\E[Y_j]=\prod_{i\in A}\left(1+\frac{x-1}{d_i+1}\right) \cdot \prod_{j\in B}\left(1+\frac{y-1}{d_j+1}\right).$$

\end{proof}

\begin{Rem}
An interesting application of the above inequalities is the following.  Suppose that a graph $G$ has $n$ vertices, $m$ edges and the length of the shortest cycle is $g$. Then for any spanning tree $T$, the local basis exchange graph $H=H[T]$ has a minimum degree $g-1$ on the side of the non-spanning-tree edges. This means that if $x\geq 1$, we have
$$\widetilde{T}_H(x,0)\geq \widetilde{T}_H(x,1)\widetilde{T}_H(1,0)\geq \widetilde{T}_H(x,1)\prod_{j\in B}\left(1-\frac{1}{d_j+1}\right)\geq \widetilde{T}_H(x,1)\left(1-\frac{1}{g}\right)^{m-n+1}.$$
By summing this inequality for all spanning trees, we get that
$$T_G(x,0)\geq T_G(x,1)\left(1-\frac{1}{g}\right)^{m-n+1}.$$
This inequality is particularly useful if one studies graphs with large girth, and a variant of this inequality was used in the paper \cite{bencs2022evaluations}.
\end{Rem}

\begin{Th}
Let $H$ be a bipartite graph with minimum degree $\delta\geq 1$. Then
$$\widetilde{T}_H\left(2+\frac{1}{\delta},0\right) \widetilde{T}_H\left(0,2+\frac{1}{\delta}\right)\geq \widetilde{T}_H(1,1)^2.$$
In particular, we have
$$\widetilde{T}_H(3,0) \widetilde{T}_H(0,3)\geq \widetilde{T}_H(1,1)^2.$$
Let $G$ be a graph without loops and bridges. Then
$$T_G(3,0) T_G(0,3)\geq T_G(1,1)^2.$$
\end{Th}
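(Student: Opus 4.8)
The plan is to feed the lower bound of Lemma~\ref{lower-bound} into both factors and then collapse the resulting product to a vertex-by-vertex inequality. Set $c=2+\tfrac1\delta$, so $c\ge 1$. First I would apply Lemma~\ref{lower-bound} with $(x,y)=(c,0)$ — this is the admissible regime $x\ge 1$, $0\le y\le 1$ — and with $(x,y)=(0,c)$ — the regime $0\le x\le1$, $y\ge1$ — to get
$$\widetilde{T}_H(c,0)\ge\prod_{i\in A}\Bigl(1+\frac{c-1}{d_i+1}\Bigr)\prod_{j\in B}\Bigl(1-\frac{1}{d_j+1}\Bigr)=\prod_{i\in A}\frac{d_i+c}{d_i+1}\prod_{j\in B}\frac{d_j}{d_j+1},$$
$$\widetilde{T}_H(0,c)\ge\prod_{i\in A}\frac{d_i}{d_i+1}\prod_{j\in B}\frac{d_j+c}{d_j+1}.$$

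Multiplying these two inequalities (all four quantities are non-negative), the $A$-factors and $B$-factors merge into a single product over all vertices:
$$\widetilde{T}_H(c,0)\,\widetilde{T}_H(0,c)\ge\prod_{v\in V(H)}\frac{d_v(d_v+c)}{(d_v+1)^2}.$$
Now the claim becomes a one-variable numerical fact: for a vertex of degree $d\ge\delta\ge1$ one has $\dfrac{d(d+c)}{(d+1)^2}\ge1$ precisely when $cd\ge2d+1$, i.e. when $c\ge2+\tfrac1d$; and this holds because $d\ge\delta$ forces $2+\tfrac1d\le2+\tfrac1\delta=c$. Hence every factor is at least $1$, so $\widetilde{T}_H(c,0)\widetilde{T}_H(0,c)\ge1=\widetilde{T}_H(1,1)^2$, which is the first asserted inequality.

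For the ``in particular'' clause, note $\delta\ge1$ gives $c=2+\tfrac1\delta\le3$; since the coefficients $t_{i,0}(H)$ and $t_{0,j}(H)$ are non-negative, $\widetilde{T}_H(x,0)$ and $\widetilde{T}_H(0,x)$ are non-decreasing for $x\ge0$, so $\widetilde{T}_H(3,0)\widetilde{T}_H(0,3)\ge\widetilde{T}_H(c,0)\widetilde{T}_H(0,c)\ge1$. Finally, for a graph $G$ with no loops and no bridges, every local basis exchange graph $H[T]$ has no isolated vertex (the Remark following Lemma~\ref{conn}), hence minimum degree at least $1$; thus $\widetilde{T}_{H[T]}(3,0)\widetilde{T}_{H[T]}(0,3)\ge\widetilde{T}_{H[T]}(1,1)^2$ for every spanning tree $T$, and the transfer lemma (Lemma~\ref{quadratic-connection}, whose proof only invokes these exchange graphs) delivers $T_G(3,0)T_G(0,3)\ge T_G(1,1)^2$.

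I do not expect a real obstacle: the only points needing care are checking that each of the two evaluations $(c,0)$, $(0,c)$ falls in the correct monotonicity window of Lemma~\ref{lower-bound}, and observing that the threshold $c=2+\tfrac1\delta$ is exactly the value making the per-vertex factor $\ge1$ (indeed equality holds at $d=\delta$, so this argument cannot give anything below $2+\tfrac1\delta$).
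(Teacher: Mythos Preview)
Your proof is correct and follows essentially the same route as the paper: apply Lemma~\ref{lower-bound} to both $\widetilde{T}_H(c,0)$ and $\widetilde{T}_H(0,c)$, multiply, and reduce to the vertex-wise inequality $\dfrac{d_v(d_v+c)}{(d_v+1)^2}\ge1$, which is exactly the paper's inequality $(1+(x-1)\varepsilon)(1-\varepsilon)\ge1$ with $\varepsilon=\tfrac{1}{d_v+1}$. The only cosmetic difference is in the ``in particular'' clause: the paper simply reruns the argument with the lower bound $\delta\ge1$ to get $x=3$ directly, whereas you pass from $c\le3$ to $3$ via monotonicity of $\widetilde{T}_H(x,0)$ in $x$; both are valid and equally short.
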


\begin{proof}
Let $x=2+\frac{1}{\delta}$. Let us use that $\widetilde{T}_H(1,1)=1$,
$$\widetilde{T}_H(x,0)\geq \prod_{i\in A}\left(1+\frac{x-1}{d_i+1}\right) \cdot \prod_{j\in B}\left(1-\frac{1}{d_j+1}\right),$$
and
$$\widetilde{T}_H(0,x)\geq \prod_{i\in A}\left(1-\frac{1}{d_i+1}\right) \cdot \prod_{j\in B}\left(1+\frac{x-1}{d_j+1}\right).$$
So it is enough to prove that
$$\left(1+\frac{x-1}{d_v+1}\right)\left(1-\frac{1}{d_v+1}\right)\geq 1$$
if $d_v$ is the degree of a vertex $v$.
The inequality $(1+(x-1)\varepsilon)(1-\varepsilon)\geq 1$ is equivalent with $(x-2)\varepsilon\geq (x-1)\varepsilon^2$, that is, $\varepsilon\leq \frac{x-2}{x-1}=\frac{1}{\delta+1}$ which is satisfied since $d_v\geq \delta$ for all vertices $v\in V(H)$. The second inequality follows from the first one by simply taking $\delta=1$.
The third inequality follows from the second one by Lemma~\ref{quadratic-connection}.
\end{proof}

\begin{Cor} \label{simple-co-simple matroids}
If $M$ is a matroid that is simple and co-simple at the same time, then
$$T_M\left(\frac{5}{2},0\right) T_M\left(0,\frac{5}{2}\right)\geq T_M(1,1)^2.$$
\end{Cor}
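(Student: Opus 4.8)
The plan is to reduce the statement about a simple and co-simple matroid $M$ to the preceding theorem applied to the local basis exchange graphs of $M$. The key observation is that simplicity and co-simplicity of $M$ are exactly the conditions guaranteeing a good minimum-degree bound on every $H[T]$. First I would recall that, by Lemma~\ref{conn} (which holds verbatim for matroids, as noted in the ``Graphs and matroids'' paragraph), $T_M(x,y)=\sum_{B\in\mathcal{B}(M)}\widetilde{T}_{H[B]}(x,y)$, where $H[B]$ is the local basis exchange graph with respect to the basis $B$.

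Next I would establish the crucial claim: if $M$ is simple and co-simple, then every $H[B]$ has minimum degree $\delta\geq 2$. Indeed, consider a basis element $e\in B$; its degree in $H[B]$ equals the number of non-basis elements $f$ that lie in the fundamental cocircuit of $e$ with respect to $B$, equivalently the size of that cocircuit minus one. In a co-simple matroid every cocircuit has size at least $3$, so this degree is at least $2$. Dually, for a non-basis element $f$, its degree in $H[B]$ is the size of the fundamental circuit of $f$ with respect to $B$ minus one; simplicity forces every circuit to have size at least $3$, so again the degree is at least $2$. Hence $\delta(H[B])\geq 2$ for every basis $B$.

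Applying the preceding Theorem with $\delta=2$ gives $2+\tfrac{1}{\delta}=\tfrac{5}{2}$, so
$$\widetilde{T}_{H[B]}\!\left(\tfrac{5}{2},0\right)\widetilde{T}_{H[B]}\!\left(0,\tfrac{5}{2}\right)\geq \widetilde{T}_{H[B]}(1,1)^2$$
for every basis $B$. (Note $M$ simple and co-simple in particular has no loops and no coloops, so each $H[B]$ has no isolated vertices, which is what the Theorem requires.) Now the transfer lemma argument of Lemma~\ref{quadratic-connection} applies in exactly the same Cauchy--Schwarz fashion: summing over all bases,
\begin{align*}
T_M\!\left(\tfrac52,0\right)T_M\!\left(0,\tfrac52\right)
&=\Bigl(\sum_{B}\widetilde{T}_{H[B]}\!\left(\tfrac52,0\right)\Bigr)\Bigl(\sum_{B}\widetilde{T}_{H[B]}\!\left(0,\tfrac52\right)\Bigr)\\
&\geq \Bigl(\sum_{B}\bigl(\widetilde{T}_{H[B]}\!\left(\tfrac52,0\right)\widetilde{T}_{H[B]}\!\left(0,\tfrac52\right)\bigr)^{1/2}\Bigr)^2\\
&\geq \Bigl(\sum_{B}\widetilde{T}_{H[B]}(1,1)\Bigr)^2
=T_M(1,1)^2.
\end{align*}

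The main obstacle — and really the only nontrivial point — is the minimum-degree claim: one must correctly identify the degree of a vertex of $H[B]$ with the size of the corresponding fundamental circuit or cocircuit and then invoke the standard fact that in a simple (co-simple) matroid circuits (cocircuits) have at least three elements. Everything else is a direct quotation of Lemma~\ref{conn}, the preceding Theorem, and the Cauchy--Schwarz step from Lemma~\ref{quadratic-connection}; no new estimates are needed.
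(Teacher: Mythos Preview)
Your proposal is correct and follows exactly the paper's approach: show that simplicity and co-simplicity force every local basis exchange graph $H[B]$ to have minimum degree at least $2$, then invoke the preceding theorem with $\delta=2$ and finish via the transfer lemma. The paper's own proof is just a terser version of what you wrote, omitting the explicit fundamental-circuit/cocircuit justification for the degree bound.
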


\begin{proof}
Since $M$ is simple and co-simple at the same time, the minimum degree of any basis exchange graph of $M$ is at least $2$, so we can apply the previous theorem with $\delta=2$. Then the transfer lemma implies the statement. 

\end{proof}

\subsection{Regular bipartite graphs}

In this part we prove that regular bipartite graphs satisfy the inequality $\widetilde{T}_H(2,0)\widetilde{T}_H(0,2)\geq \widetilde{T}_H(1,1)^2$.

\begin{Th}
If $H$ is a regular bipartite graph, then for $x\in [0,2]$, we have
$$\widetilde{T}_H(x,2-x)\geq 1.$$
In particular,
$$\widetilde{T}_H(2,0)\widetilde{T}_H(0,2)\geq \widetilde{T}_H(1,1)^2.$$
\end{Th}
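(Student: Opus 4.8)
The plan is to use the probabilistic representation from Lemma~\ref{vlsz} together with Harris' inequality, exploiting the fact that $H$ is $d$-regular for some $d\geq 1$. The key observation is that for a regular bipartite graph, $\widetilde{T}_H(x,1)$ and $\widetilde{T}_H(1,y)$ can be controlled very explicitly. First I would set $y=2-x$ with $x\in[0,2]$; by symmetry (Lemma~\ref{csere}) it suffices to treat $x\in[1,2]$, so that $0\leq y=2-x\leq 1$ and $x\geq 1$. In this regime, by Lemma~\ref{special rectangle}, we have
$$\widetilde{T}_H(x,2-x)\geq \widetilde{T}_H(x,1)\widetilde{T}_H(1,2-x).$$
So it is enough to prove $\widetilde{T}_H(x,1)\widetilde{T}_H(1,2-x)\geq 1$ for $x\in[1,2]$.

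Next I would compute $\widetilde{T}_H(x,1)$ using the representation $\widetilde{T}_H(x,y)=\E\!\left[\prod_{i\in A}X_i\cdot\prod_{j\in B}Y_j\right]$ with $y=1$, so that every $Y_j\equiv 1$; thus $\widetilde{T}_H(x,1)=\E\!\left[\prod_{i\in A}X_i\right]=\E\!\left[x^{I(A)}\right]$, where $I(A)$ is the number of internally active vertices in $A$. Since $x\geq 1$, the function $t\mapsto x^t$ is convex, so by Jensen's inequality $\widetilde{T}_H(x,1)\geq x^{\E[I(A)]}$. Now $\E[I(A)]=\sum_{i\in A}\frac{1}{d_i+1}=\frac{|A|}{d+1}$ by regularity, and similarly $\E[I(B)]=\frac{|B|}{d+1}$; for a $d$-regular bipartite graph $|A|=|B|=:k$, so $\E[I(A)]=\E[I(B)]=\frac{k}{d+1}=:c$. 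Hence $\widetilde{T}_H(x,1)\geq x^{c}$. Applying the symmetric argument (or Lemma~\ref{csere}) gives $\widetilde{T}_H(1,2-x)\geq (2-x)^{c}$. Therefore
$$\widetilde{T}_H(x,2-x)\geq x^{c}(2-x)^{c}=\bigl(x(2-x)\bigr)^{c}.$$
This is where the argument would stall: $x(2-x)\leq 1$ for all $x$, with equality only at $x=1$, so the bound $\bigl(x(2-x)\bigr)^c$ is generally less than $1$ — Jensen in this direction is too lossy.

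To fix this, I would instead use Jensen "the other way" or, better, compute $\widetilde{T}_H(x,1)$ more precisely via the recursion of Lemma~\ref{rek}, or use a direct convexity-in-$x$ argument on the single-variable polynomial $p(x):=\widetilde{T}_H(x,1)=\sum_i t_i x^i$, where $t_i=\P(I(A)=i)$. The honest route is: $\widetilde{T}_H(x,2-x)$ is, for each fixed graph, a polynomial in $x$; show it is convex on $[0,2]$ (its second derivative is a nonnegative combination of terms, using $\widetilde{T}_H(1,1)=1$ and that the mixed partials have a sign coming from Lemma~\ref{special rectangle} applied infinitesimally), and that it equals $1$ at $x=1$ with vanishing derivative there — the derivative at $x=1$ is $\E[I(A)]-\E[I(B)]=0$ by regularity. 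Convexity plus a critical point at the interior value $x=1$ forces $\widetilde{T}_H(x,2-x)\geq \widetilde{T}_H(1,1)=1$ on all of $[0,2]$. The main obstacle, and the step requiring genuine care, is establishing the convexity of $x\mapsto\widetilde{T}_H(x,2-x)$ on $[0,2]$: I expect this to follow by writing the second derivative as $\E\big[(\ldots)\big]$ using the $X_i,Y_j$ representation and checking that the resulting correlation expression is nonnegative via Harris' inequality (monotonicity of the relevant functions in the regime dictated by the sign of $x-1$), but the bookkeeping of cross terms is the delicate part. Once convexity and the critical point at $x=1$ are in hand, the final chain
$$\widetilde{T}_H(2,0)\widetilde{T}_H(0,2)\geq \widetilde{T}_H(1,1)^2$$
follows: by regularity $\widetilde{T}_H(2,0)=\widetilde{T}_H(0,2)$ (Lemma~\ref{csere}), and $\widetilde{T}_H(2,0)=\widetilde{T}_H(2,2-2)\geq 1=\widetilde{T}_H(1,1)$ by the inequality just proved, so the product is $\geq 1 = \widetilde{T}_H(1,1)^2$.
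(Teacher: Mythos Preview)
Your proposal has a genuine gap: the convexity of $x\mapsto\widetilde{T}_H(x,2-x)$ is stated as an expectation rather than proved. You correctly observe that the derivative vanishes at $x=1$ (since $\E[I(A)]=\E[I(B)]$ by regularity) and that the value there is $1$, but convexity is then the entire content of the argument, and you give no proof of it. Writing the second derivative as $\partial_{xx}\widetilde{T}_H-2\,\partial_{xy}\widetilde{T}_H+\partial_{yy}\widetilde{T}_H$ at $(x,2-x)$ does not visibly organize itself into a Harris-type correlation inequality; the ``bookkeeping'' you defer is not a detail but the whole difficulty. Your first route via Lemma~\ref{special rectangle} and Jensen, as you already noticed, goes the wrong way.

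The paper's proof sidesteps all of this with a different grouping of the random variables $X_i,Y_j$. Since $H$ is $d$-regular bipartite, it has a perfect matching $\{(u_1,v_1),\dots,(u_k,v_k)\}$ with $u_i\in A$, $v_i\in B$. For $x\in[1,2]$ and $y=2-x\in[0,1]$, each product $X_{u_i}Y_{v_i}$ is monotone increasing, so Harris' inequality gives
\[
\widetilde{T}_H(x,y)=\E\!\left[\prod_{i=1}^k X_{u_i}Y_{v_i}\right]\geq\prod_{i=1}^k\E\bigl[X_{u_i}Y_{v_i}\bigr].
\]
Because $u_i$ and $v_i$ are adjacent they cannot both be active, so
\[
\E\bigl[X_{u_i}Y_{v_i}\bigr]=1+\frac{x-1}{d+1}+\frac{y-1}{d+1}=1,
\]
using $x+y=2$. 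This yields $\widetilde{T}_H(x,2-x)\geq 1$ directly, with no convexity argument needed. The perfect matching is the key idea you are missing.

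A minor point: your claim that regularity gives $\widetilde{T}_H(2,0)=\widetilde{T}_H(0,2)$ via Lemma~\ref{csere} is not correct --- regularity forces $|A|=|B|$ but does not produce an automorphism swapping the two sides. Fortunately you do not need equality: once $\widetilde{T}_H(x,2-x)\geq 1$ holds, plugging in $x=2$ and $x=0$ bounds each factor by $1$ separately.
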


\begin{proof}
We define the random variables $X_i$ and $Y_j$ as before. Recall that if $x\geq 1$ and $0\leq y\leq 1$, then all of these variables are monotone increasing. We can assume that $x\in [1,2]$ and $y=2-x\in [0,1]$.
Since $H$ is regular, it contains a perfect matching $M=\{(u_1,v_1),(u_2,v_2),\dots (u_k,v_k)\}$, where $k=m/2$. Then
$$\widetilde{T}_H(x,y)=\E\left[ \prod_{i\in A}X_i\cdot \prod_{j\in B}Y_j\right]=\E\left[\prod_{i=1}^k\left(X_{u_i}Y_{v_i}\right)\right]\geq \prod_{i=1}^k\E\left[X_{u_i}Y_{v_i}\right]$$
since for each $i$ the random variables $X_{u_i}Y_{v_i}$ are monotone increasing, thus we can use Harris' inequality (Lemma~\ref{FKG-inequality}).
Now observe that
$$\E\left[X_{u_i}Y_{v_i}\right]=1+\frac{x-1}{d+1}+\frac{y-1}{d+1}=1$$
since the probability of $u_i$ being active is $\frac{1}{d+1}$, just as the probability of $v_i$ being active, and these two events exclude each other.
Hence $\widetilde{T}_H(x,y)\geq 1=\widetilde{T}_H(1,1)$.
\end{proof}

\subsection{Trees}

In this section, we prove that trees also satisfy the inequality $\widetilde{T}_H(2,0)\widetilde{T}_H(0,2)\geq \widetilde{T}_H(1,1)^2$. First, we need a lemma about the decompositions of trees.

\begin{Lemma}[Gluing lemma] \label{P(H) of glued trees}
Let $x\geq 1$ and $0\leq y\leq 1$.
Let $H_1$ be a rooted tree with root vertex $v_1$. Let $H_2$ be another rooted tree with root vertex $v_2$. Let $H$ be obtained from $H_1$ and $H_2$ by identifying $v_1$ and $v_2$ in the union of $H_1$ and $H_2$. Let $v$ be the vertex obtained from identifying $v_1$ and $v_2$. Assume that the bipartite parts of $H$ determines the bipartite parts of $H_1$ and $H_2$, that is, if $v\in A(H)$, then $v_1\in A(H_1)$ and $v_2\in A(H_2)$, and if $v\in B(H)$, then $v_1\in B(H_1)$ and $v_2\in B(H_2)$.
\medskip

\noindent (a) If $v\in A$, then
$$x\widetilde{T}_H(x,y)\geq \widetilde{T}_{H_1}(x,y)\widetilde{T}_{H_2}(x,y).$$
\noindent (b) If $v\in B$, then
$$\widetilde{T}_H(x,y)\geq \widetilde{T}_{H_1}(x,y)\widetilde{T}_{H_2}(x,y).$$
\end{Lemma}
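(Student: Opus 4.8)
The plan is to use the probabilistic representation $\widetilde{T}_H(x,y)=\E[\prod_{i\in A}X_i\cdot\prod_{j\in B}Y_j]$ established above, where each vertex $i\in A$ gets an independent $x_i\sim U(0,1)$, each vertex $j\in B$ gets $x_j=1-y_j$ with $y_j\sim U(0,1)$, and $X_i,Y_j$ are the $\{x,1\}$- resp.\ $\{y,1\}$-valued indicator-type variables. Since $x\geq 1$ and $0\leq y\leq 1$, all the $X_i$ and $Y_j$ are monotone increasing in the underlying uniform variables. The key structural point is that $H$ is a tree obtained by gluing $H_1$ and $H_2$ at the single vertex $v$; so the edge sets of $H_1$ and $H_2$ are disjoint, and the neighbourhood of any vertex $w\neq v$ lies entirely in the tree that originally contained $w$. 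Only the activity status of $v$ couples the two sides.

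\textbf{Conditioning on the value at the glued vertex.}
First I would condition on $x_v=s$ (in case (a), $v\in A$) or on $y_v=t$, equivalently $x_v=1-t$ (in case (b), $v\in B$). Conditioned on this value, the variables attached to $H_1\setminus\{v\}$ and to $H_2\setminus\{v\}$ are independent, and the product $\prod_{w\in V(H)}(\text{its variable})$ factors as (a term for $v$) times (a product over $V(H_1)\setminus\{v\}$) times (a product over $V(H_2)\setminus\{v\}$). So $\widetilde{T}_H(x,y)=\E_{s}\big[Z_v(s)\cdot \E[\prod_{H_1\setminus v}\mid x_v{=}s]\cdot\E[\prod_{H_2\setminus v}\mid x_v{=}s]\big]$, where $Z_v(s)$ is the value of $X_v$ (resp.\ $Y_v$) given $x_v=s$. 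The point is that $\widetilde{T}_{H_k}(x,y)$ itself equals $\E_s[Z_{v_k}(s)\cdot\E[\prod_{H_k\setminus v_k}\mid x_{v_k}{=}s]]$, with the \emph{same} conditional factor $\E[\prod_{H_k\setminus v}\mid x_v{=}s]$ appearing, because the rest of $H_k$ sees $v$ only through its value.

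\textbf{Case (a), $v\in A$.}
Write $g_k(s)=\E[\prod_{w\in V(H_k)\setminus\{v\}}(\text{var}_w)\mid x_v=s]$ for $k=1,2$; these are monotone \emph{increasing} functions of $s$ (increasing $x_v$ can only flip some $X_i$ from $1$ to $x\geq1$ or some $Y_j$ from $y$ to $1$, and the neighbours of $v$ on the $B$-side have their $Y_j$ increase). For $v\in A$, $X_v$ given $x_v=s$ is $x$ if $s\geq\max_{j\in N_H(v)}x_j$ and $1$ otherwise; the probability over the other variables that $s$ exceeds all neighbours, call it $p(s)$, is increasing in $s$. Then $\widetilde{T}_H(x,y)=\E_s[\,(x p(s)+(1-p(s)))\,g_1(s)g_2(s)\,]$ — wait, more carefully the $X_v$ factor and the neighbour-variables in $g_k$ are coupled, so I would instead keep $X_v$ inside and write $\widetilde{T}_H = \E_s\E[X_v\,g_1^\circ\,g_2^\circ]$ where $g_k^\circ$ are the (random, given $s$) conditional products; since $X_v$, $g_1^\circ$, $g_2^\circ$ are all monotone increasing in all remaining variables, Harris gives $\widetilde{T}_H\geq\E[X_v]\cdot\widetilde{T}_{H_1}(x,y)/\E[X_{v_1}]\cdot\ldots$ — this bookkeeping is delicate, so the cleanest route is: apply Harris conditionally on $x_v=s$ to get $\widetilde{T}_H\geq \E_s[\,\E[X_v\mid s]\,g_1(s)\,g_2(s)\,]$, then note $\widetilde{T}_{H_k}(x,y)=\E_s[\E[X_{v}\mid s]\,g_k(s)]$, and finally compare $\E_s[\E[X_v\mid s]g_1 g_2]$ with $\E_s[\E[X_v\mid s]g_1]\cdot\E_s[\E[X_v\mid s]g_2]$ using that under the measure weighted by $\E[X_v\mid s]\,ds$ both $g_1,g_2$ are increasing, together with the crude bound $\E[X_v\mid s]\le x$, which produces the factor $x$ on the left. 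Case (b) is the same but $v\in B$, $Y_v$ is $y\leq 1$-valued, and the clean version of Harris already gives the inequality without the extra $x$; the monotonicity directions work out because $x_v=1-y_v$ is \emph{decreasing} in $y_v$, but all the $g_k$ are then decreasing in $y_v$ too, so the correlation inequality still applies in the right direction.

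\textbf{Main obstacle.}
The delicate part is the change-of-measure / Chebyshev step: after conditioning, I need that the \emph{ordinary} product $\widetilde{T}_{H_1}\widetilde{T}_{H_2}=\E_s[h(s)g_1(s)]\cdot\E_{s'}[h(s')g_2(s')]$ is dominated by $\E_s[h(s)g_1(s)g_2(s)]$ (times $x$ in case (a)), where $h(s)=\E[X_v\mid s]$. This is a Chebyshev-type (FKG on a one-dimensional weighted measure) inequality requiring $g_1,g_2$ comonotone under the weight $h(s)\,ds$ — true since both are monotone in the same direction — but the normalisation constant $\int h(s)\,ds=\widetilde{T}_{\text{pendant}}$ is not $1$, and reconciling that constant with the target $x$ (case a) or $1$ (case b) is exactly where the hypothesis $x\ge1\ge y$ and $d_v+1$ enter. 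I expect this one-variable weighted-Chebyshev bookkeeping to be the crux; everything else is the standard conditioning-plus-Harris template already used repeatedly in this section.
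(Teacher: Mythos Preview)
Your approach has a genuine gap, and the paper's proof is much simpler than the route you are attempting.

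The paper does not condition on $x_v$ at all. Instead it observes a \emph{pointwise} inequality: if $v\in B$, then $Y_v\geq Y_{v_1}Y_{v_2}$ (because $Y_v=y$ forces $Y_{v_1}=Y_{v_2}=y$, and $y\leq 1$), and if $v\in A$, then $xX_v\geq X_{v_1}X_{v_2}$ (check the four cases; the worst is $X_v=1$, which means some neighbour of $v$ beats $v$, hence at least one of $X_{v_1},X_{v_2}$ also equals $1$, so the product is at most $x$). Replacing $Y_v$ by $Y_{v_1}Y_{v_2}$ (resp.\ $xX_v$ by $X_{v_1}X_{v_2}$) inside the expectation, the full product becomes exactly $\Big(\prod_{H_1}\Big)\cdot\Big(\prod_{H_2}\Big)$, two monotone increasing functions sharing only the variable at $v$, and a single application of Harris' inequality finishes. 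No conditioning, no weighted measure, no one-dimensional Chebyshev step.

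Your conditioning route stumbles at the line ``note $\widetilde{T}_{H_k}(x,y)=\E_s[\E[X_{v}\mid s]\,g_k(s)]$''. This identity is false for two reasons. First, the activity variable for $v$ inside $H_k$ is $X_{v_k}$, not $X_v$; these differ because $v$ has more neighbours in $H$ than in $H_k$. Second, and more seriously, even with $X_{v_k}$ you have
\[
\widetilde{T}_{H_k}(x,y)=\E_s\big[\E[X_{v_k}P_k\mid s]\big],
\]
and given $s$ the factors $X_{v_k}$ and $P_k$ are \emph{positively} correlated (both increasing in the remaining variables), so $\E[X_{v_k}P_k\mid s]\geq \E[X_{v_k}\mid s]\,g_k(s)$, which gives $\widetilde{T}_{H_k}\geq \E_s[h_k g_k]$ --- the inequality points the wrong way for your argument. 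Your weighted Chebyshev step then only shows $x\widetilde{T}_H\geq \E_s[h g_1]\E_s[h g_2]$, which is \emph{below} $\widetilde{T}_{H_1}\widetilde{T}_{H_2}$, not above. The missing ingredient that repairs this is precisely the pointwise bound $xX_v\geq X_{v_1}X_{v_2}$ (it lets you pass from $\E[X_v P_1P_2\mid s]$ directly to $\E[X_{v_1}P_1\mid s]\E[X_{v_2}P_2\mid s]$ by conditional independence), after which the final 1D Chebyshev in $s$ does work --- but at that point you have essentially reproduced the paper's argument with an unnecessary conditioning layer.
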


\begin{proof}
First, we prove part (b), and the proof of part (a) will be very similar. 

As before, we introduce the random variables $X_i$ and $Y_j$. In particular,
$$Y_{v}(\{x_i\}_{i \in A(H)},y_v)=\left\{ \begin{array}{ll}
y & \mbox{if}\ \max_{i\in N_H(v)}x_i\leq 1-y_v,\\
1 & \mbox{if}\ \max_{i\in N_H(v)}x_i\geq 1-y_v.
\end{array} \right.$$
and similarly,
$$Y_{v_1}(\{x_i\}_{i \in A(H_1)},y_{v})=\left\{ \begin{array}{ll}
y & \mbox{if}\ \max_{i\in N_{H_1}(v)}x_i\leq 1-y_{v},\\
1 & \mbox{if}\ \max_{i\in N_{H_1}(v)}x_i\geq 1-y_{v}.
\end{array} \right.$$
and
$$Y_{v_2}(\{x_i\}_{i \in A(H_2)},y_{v})=\left\{ \begin{array}{ll}
y & \mbox{if}\ \max_{i\in N_{H_2}(v)}x_i\leq 1-y_{v},\\
1 & \mbox{if}\ \max_{i\in N_{H_2}(v)}x_i\geq 1-y_{v}.
\end{array} \right.$$
Note that we think of $H_1$ and $H_2$ as they are embedded into $H$, that is why we used the variable $y_v$ for both $Y_{v_1}$ and $Y_{v_2}$. In this sense, $Y_{v}\geq Y_{v_1}$ because if $Y_v=y$, then $Y_{v_1}=Y_{v_2}=y$ automatically holds. Since $y<1$, we also get that $Y_v\geq Y_{v_1}Y_{v_2}$. Hence
$$\widetilde{T}_H(x,y)=\E\left[ \prod_{i\in A(H)}X_i\cdot \prod_{j\in B(H)}Y_j\right]\geq \E\left[Y_{v_1}Y_{v_2}\prod_{i\in A(H)}X_i\cdot \prod_{j\in B(H)\setminus v}Y_j\right]\geq$$
$$\geq \E\left[ \prod_{i\in A(H_1)}X_i\cdot \prod_{j\in B(H_1)}Y_j\right]\cdot \E\left[ \prod_{i\in A(H_2)}X_i\cdot \prod_{j\in B(H_2)}Y_j\right]=\widetilde{T}_{H_1}(x,y)\widetilde{T}_{H_2}(x,y).$$

In the proof of part (a), we use that $xX_v\geq X_{v_1}X_{v_2}$.  Whence
$$x\widetilde{T}_H(x,y)=\E\left[x \prod_{i\in A(H)}X_i\cdot \prod_{j\in B(H)}Y_j\right]\geq 
\E \left[ X_{v_1}X_{v_2} \prod_{i\in A(H)\setminus v}X_i
\prod_{j\in B(H)}Y_j\right]$$

$$\geq \E\left[\prod_{i\in A(H_1)}X_i\cdot \prod_{j\in B(H_1)}Y_j\right]\E\left[\prod_{i\in A(H_2)}X_i\cdot \prod_{j\in B(H_2)}Y_j\right]=\widetilde{T}_{H_1}(x,y)\widetilde{T}_{H_2}(x,y).$$
\end{proof}

\begin{Def} Let
$$P(H):=\widetilde{T}_{H}(2,0)\widetilde{T}_{H}(0,2).$$
\end{Def}

The following lemma is an immediate consequence of Lemma~\ref{P(H) of glued trees}.

\begin{Lemma} \label{P(H) of glued trees 2}
Let $H_1$ be a rooted tree with root vertex $v_1$. Let $H_2$ be another rooted tree with root vertex $v_2$. Let $H$ be obtained from $H_1$ and $H_2$ by identifying $v_1$ and $v_2$ in the union of $H_1$ and $H_2$. Then
$$P(H)\geq \frac{1}{2}P(H_1)P(H_2).$$
\end{Lemma}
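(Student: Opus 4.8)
The plan is to apply the Gluing Lemma (Lemma~\ref{P(H) of glued trees}) with the parameter choices $(x,y)=(2,0)$ and $(x,y)=(0,2)$ separately, and then combine the two resulting inequalities. Concretely, set $x\ge 1$, $0\le y\le 1$ and first take $(x,y)=(2,0)$. If the identified vertex $v$ lies in $A(H)$, part~(a) of Lemma~\ref{P(H) of glued trees} gives $2\,\widetilde{T}_H(2,0)\ge \widetilde{T}_{H_1}(2,0)\widetilde{T}_{H_2}(2,0)$, while part~(b) applied in the graph $H'$ obtained by switching the two sides (using Lemma~\ref{csere}, so that $\widetilde{T}_H(0,2)=\widetilde{T}_{H'}(2,0)$ and $v$ lies in $B(H')$) gives $\widetilde{T}_H(0,2)\ge \widetilde{T}_{H_1}(0,2)\widetilde{T}_{H_2}(0,2)$. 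Multiplying these two inequalities yields
$$2\,P(H)=2\,\widetilde{T}_H(2,0)\widetilde{T}_H(0,2)\ge \widetilde{T}_{H_1}(2,0)\widetilde{T}_{H_2}(2,0)\widetilde{T}_{H_1}(0,2)\widetilde{T}_{H_2}(0,2)=P(H_1)P(H_2),$$
which rearranges to $P(H)\ge \tfrac12 P(H_1)P(H_2)$, as desired.

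The symmetric case where $v\in B(H)$ is handled the same way: now part~(b) of Lemma~\ref{P(H) of glued trees} applies directly to give $\widetilde{T}_H(2,0)\ge \widetilde{T}_{H_1}(2,0)\widetilde{T}_{H_2}(2,0)$ with no extra factor, and part~(a), applied after switching sides, gives $2\,\widetilde{T}_H(0,2)\ge \widetilde{T}_{H_1}(0,2)\widetilde{T}_{H_2}(0,2)$. The product is again $2\,P(H)\ge P(H_1)P(H_2)$. So in both cases exactly one of the two parts contributes a factor of $2$, which is precisely the factor $\tfrac12$ appearing in the statement, and one never does better or worse than this.

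There is essentially no obstacle here: the only point requiring a word of care is checking that the hypotheses of Lemma~\ref{P(H) of glued trees} are met after the side-switch — namely that the bipartition of $H'$ still "determines" the bipartitions of $H_1'$ and $H_2'$ — but this is immediate since switching sides is an involution that preserves the amalgamation structure. One should also note that $\widetilde{T}_{H_i}(2,0),\widetilde{T}_{H_i}(0,2)\ge 0$ so that multiplying the two inequalities is valid; this holds because all coefficients $t_{i,j}(H_i)$ are non-negative. Thus the lemma follows by a direct two-line combination of the two parts of Lemma~\ref{P(H) of glued trees}.
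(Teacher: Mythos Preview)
Your proof is correct and follows essentially the same approach as the paper: apply part~(a) of Lemma~\ref{P(H) of glued trees} at $(x,y)=(2,0)$, then switch sides via Lemma~\ref{csere} and apply part~(b), and multiply. The paper's proof is nearly word-for-word the same, including the case split on whether $v\in A$ or $v\in B$.
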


\begin{proof} Suppose that $v\in V(H)$ obtained from identifying $v_1$ and $v_2$. We can assume that $v\in A$ as the argument for $v\in B$ is completely analogous. Then by part (a) of Lemma~\ref{P(H) of glued trees} we have
$$2\widetilde{T}_H(2,0)\geq \widetilde{T}_{H_1}(2,0)\widetilde{T}_{H_2}(2,0).$$
Let $H'=(B,A,E)$ be the graph obtained by switching the two sides of $H$. Then $v\in B(H')$ and by part (b) of Lemma~\ref{P(H) of glued trees} we have
$$\widetilde{T}_{H'}(2,0)\geq \widetilde{T}_{H'_1}(2,0)\widetilde{T}_{H'_2}(2,0).$$
By Lemma~\ref{csere} this is equivalent with
$$\widetilde{T}_{H}(0,2)\geq \widetilde{T}_{H_1}(0,2)\widetilde{T}_{H_2}(0,2).$$
By multiplying the two inequalities we get that
$$2P(H)\geq P(H_1)P(H_2).$$
\end{proof}

Next, we need a lemma that says that we can always decompose a tree into two trees such that none of them is too small or too large.

\begin{Lemma} \label{tree decomposition}
Let $M\geq 2$. Let $H$ be a tree. If $H$ has $M$ edges, then it can be decomposed to edge-disjoint trees $H_1$ and $H_2$ such that both of them have at least $M/3$ edges.
\end{Lemma}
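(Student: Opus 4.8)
The plan is to root the tree $H$ at an arbitrary leaf $r$ and perform a standard centroid-type search on the edge set. Concretely, for each vertex $v$, let $s(v)$ denote the number of edges in the subtree hanging below $v$ (so $s(r)=M$ and $s(v)=0$ when $v$ is a leaf other than $r$). As one walks down from the root, the quantity $s(v)$ decreases, but when passing from a vertex $v$ to one of its children $w$ the drop $s(v)-s(w)$ need not be small. The key observation is to stop at the \emph{first} vertex $v$ (in a top-down traversal) for which $s(v)<2M/3$; let $p$ be its parent, so $s(p)\geq 2M/3$.

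First I would handle the easy case: if $s(v)\geq M/3$, then the subtree below the edge $pv$, call it $H_1$, has $s(v)\in[M/3,2M/3)$ edges, and $H_2:=H\setminus E(H_1)$ has $M-s(v)\in(M/3,2M/3]$ edges, and both share only the vertex $p$ — wait, more precisely $H_1$ consists of the edge $pv$ together with everything below $v$, so it has $s(v)+1$ edges; adjusting the threshold constants slightly (or absorbing the $+1$, which is harmless since for $M\geq 2$ the bound $M/3$ leaves room) this still gives both parts at least $M/3$ edges, and they are edge-disjoint trees whose union is $H$.

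The remaining case is $s(v)<M/3$. Then, since $v$ was the first vertex with $s(v)<2M/3$, each child $w$ of $v$ already satisfies $s(w)<2M/3$, and in fact $s(w)\le s(v)<M/3$. Now I would greedily group the edge $pv$ together with the subtrees below the children $w_1,w_2,\dots$ of $v$ one at a time, forming a growing tree $H_1$ rooted at $v$ (initially just the edge $pv$, which contributes $1$ edge). Each time we add a child subtree we increase the edge count of $H_1$ by $s(w_i)+1<M/3+1\le M/3$-ish, a quantity smaller than $M/3$; we keep adding until the edge count of $H_1$ first reaches $M/3$. Since each increment is less than $M/3$, at the moment we cross the threshold $M/3$ we have not yet exceeded $2M/3$, so $H_1$ has between $M/3$ and $2M/3$ edges, and $H_2:=H\setminus E(H_1)$ — which contains the part of $H$ above and including $p$, plus the not-yet-used child subtrees of $v$ — also has at least $M/3$ edges. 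Both are trees (each is connected: $H_1$ because everything is attached through $v$ and $p$; $H_2$ because it is what remains of a tree after deleting a subtree rooted below $v$) and they are edge-disjoint with union $H$.

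The main obstacle — really the only subtlety — is bookkeeping the off-by-one caused by whether the bridging edge $pv$ is counted in $H_1$ or $H_2$, and making sure the greedy increments in the last case are genuinely bounded by something $\le M/3$ so that the interval $[M/3,2M/3]$ is not skipped; this forces a careful but routine choice of where to place strict versus non-strict inequalities, and uses $M\geq 2$ to rule out degenerate tiny cases. Everything else is the elementary fact that on a tree the "subtree size" function changes in controlled steps as one descends, which is exactly what makes a balanced split possible. Once Lemma~\ref{tree decomposition} is in hand, combining it with Lemma~\ref{P(H) of glued trees 2} by induction on the number of edges will yield the desired lower bound on $P(H)$ for all trees $H$.
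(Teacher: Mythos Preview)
Your centroid-style descent is a natural strategy, and the easy case $M/3\le s(v)<2M/3$ works: simply take $H_1$ to be the subtree rooted at $v$ (so $|E(H_1)|=s(v)$, without the edge $pv$), and $H_2$ the complement.

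The hard case $s(v)<M/3$, however, is broken as written. You propose to build $H_1$ from the edge $pv$ together with some subtrees hanging below the \emph{children of $v$}, but the total number of edges you can collect this way is only $1+s(v)$, which need not reach $M/3$. For a concrete failure, let $M=6$ and take the broom $r$--$c$--$p$ with four leaves attached to $p$: then $s(p)=4\ge 2M/3$ while every child $v$ of $p$ is a leaf with $s(v)=0<M/3=2$, so your $H_1$ is just the single edge $pv$. There is a second problem: whenever your greedy stops before exhausting all of $v$'s children, the unused child-subtrees of $v$ land in $H_2$ but are separated from the rest of $H_2$ because the edge $pv$ lies in $H_1$; hence $H_2$ is not even connected.

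The repair is to run the greedy one level higher, at $p$, grouping subtrees hanging from the children of $p$ (so both parts meet at $p$ and remain trees). For this you must fix the descent rule to ``always move to the child of largest $s$'', ensuring every child of $p$ has subtree size at most $s(v)<M/3$; then, processing children from largest to smallest, the partial sums cannot skip the window $[M/3,\,2M/3]$. This corrected argument is close in spirit to the paper's proof, which also looks at the branch sizes at a vertex and, when no balanced split is available, walks along the heaviest branch toward the next branching vertex; both are centroid-type arguments, just organised slightly differently.
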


\begin{Rem}
The lemma is tight in the sense that if we have a tree $H $ on $3k+1$ vertices such that from a vertex of degree $3 $ we have $3$ paths of length $k$, then in any decomposition, there is a tree with at most $k$ edges.
\end{Rem}

\begin{proof}[Proof of Lemma~\ref{tree decomposition}]
We give an algorithm to find such a decomposition. If the tree is a path, then the problem is trivial. If the tree is not a path, then let $v$ be a vertex of degree at least $3$. Let $a_1\leq a_2\leq \dots \leq a_k$ be the number of edges of the branches from $v$, that is, $k\geq 3$ is the degree of $v$ and $a_1+\dots +a_k=M$. Clearly, $a_1\leq M/k\leq M/3$. Let us introduce the function $t(v):=a_k$. Let us distinguish three cases. 
\medskip

Case 1: there is an $i<k$ such that $M/3\leq a_1+\dots +a_{i-1}\leq 2M/3$. In this case, we are done because we can put the first $i-1$ branches into $H_1$ and the rest to $H_2$.
\medskip

Case 2: There is an $i<k$ such that $a_1+\dots +a_{i-1}< M/3$ but $a_1+\dots +a_i\geq 2M/3$. Then $a_i>M/3$. Since $i<k$, we have $a_k\geq a_i>M/3$, but then $a_1+\dots +a_k>M$. So this case cannot happen.
\medskip

Case 3: $a_1+\dots +a_{k-1}<M/3$. In this case, let us start to walk in the k-th branch to the next vertex of degree at least $3$, let us call it $u$. So consider the first $k-1$ branches as one in the sequel. Since we are walking on a path, the size of this branch changes one by one. If at some point the size of the branch is at least $M/3$, then we are done. If this is not the case, then we arrive at the next vertex of degree at least $3$, namely $u$, and we can repeat the whole argument. An important observation is though that $t(u)<t(v)$. So by repeating this argument, we eventually arrive at a decomposition where the parts have sizes between $M/3$ and $2M/3$.  
\end{proof}

\begin{Th}
For every tree $H$, we have $P(H)\geq 1$. In fact, if $H$ has at least $10$ vertices, then $P(H)\geq 2$.
\end{Th}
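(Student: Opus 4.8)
The plan is to combine the gluing lemma (Lemma~\ref{P(H) of glued trees 2}) with the balanced tree-decomposition lemma (Lemma~\ref{tree decomposition}) in an inductive argument on the number of edges. Since $\widetilde{T}_H(1,1)=1$, the inequality $P(H)\geq 1$ is exactly the multiplicative Conde--Merino--Welsh inequality for trees. First I would reduce to connected $H$: by Lemma~\ref{comp}, $P$ is multiplicative over disjoint unions, so it suffices to treat a single tree, and isolated vertices only multiply $P$ by a power of $2$, which is harmless. So assume $H$ is a tree with $M$ edges.

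The induction is set up as follows. The base cases are small trees (say $M\leq$ some explicit bound), which one checks directly — these are stars and short paths, for which $\widetilde{T}_H$ is easily computed via Lemma~\ref{rek} or Lemma~\ref{lem:complete}, and in fact a star $K_{1,M}$ already gives $P(K_{1,M})=\frac{2}{M+1}\cdot\frac{2(2^M-1)}{M+1}$, which exceeds $1$ once $M$ is moderately large and exceeds $2$ once $M\geq 9$ or so. For the inductive step, apply Lemma~\ref{tree decomposition}: decompose $H$ into edge-disjoint trees $H_1,H_2$, each with at least $M/3$ edges, glued at a common vertex. By Lemma~\ref{P(H) of glued trees 2}, $P(H)\geq \tfrac12 P(H_1)P(H_2)$. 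The point is that each $H_i$ has strictly fewer edges than $H$ (since the other piece has at least $M/3\geq 1$ edges), so the induction hypothesis applies to both. To make the factor $\tfrac12$ lossy-but-survivable, one strengthens the inductive hypothesis: I would prove that $P(H)\geq 2$ for all trees with at least $N_0$ vertices for a suitable explicit $N_0$ (the statement says $10$), and $P(H)\geq 1$ always. Then in the step, if both $H_1,H_2$ are large enough to have $P(H_i)\geq 2$, we get $P(H)\geq \tfrac12\cdot 2\cdot 2=2$; if $H$ itself has at least $N_0$ vertices then since each $H_i$ has at least $M/3$ edges, both are forced to be large (for $M$ beyond a threshold), closing the induction. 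One must handle separately the intermediate range where $H$ has many vertices but one piece of the decomposition is still below the $P\geq 2$ threshold — there one uses the better bound $P(H_i)\geq 1$ for the small piece together with an explicit lower bound $P(H_i)\geq c>2$ for the large piece (which the star/path computations supply with room to spare), so that $\tfrac12\cdot c\cdot 1\geq 2$.

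Concretely, the bookkeeping I would carry out: let $f(M)$ be the minimum of $P(H)$ over trees with $M$ edges; the recursion gives $f(M)\geq \tfrac12 \min_{M/3\le k\le 2M/3} f(k)f(M-k)$, and combined with the base-case computations and monotonicity-type estimates for $f$ on small arguments, one checks by a finite computation that $f(M)\geq 1$ for all $M\geq 1$ and $f(M)\geq 2$ for all $M\geq 9$. The ``$10$ vertices'' in the statement is ``$9$ edges.'' The verification that the explicit threshold works is a finite check: one needs $f(k)\geq 1$ for $k\ge 1$ (part one of the induction, which is the easier half) and then, pushing the threshold, one lists the trees with between (roughly) $3$ and $9$ edges, computes their $P$ values, observes they are all $\geq 1$ and the ones with $\geq 9$ edges are $\geq 2$, and checks the decomposition inequality bootstraps from there.

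The main obstacle is the factor $\tfrac12$ in Lemma~\ref{P(H) of glued trees 2}: a naive induction with hypothesis ``$P(H)\geq 1$'' gives only $P(H)\geq \tfrac12$, so it does not close. The whole art is in choosing the strengthened hypothesis ($P\geq 2$ for large trees) and verifying that the balanced decomposition — which guarantees each piece has at least $M/3$ edges, hence is ``large'' once $M$ is large enough — feeds back enough slack to absorb the $\tfrac12$. A secondary annoyance is the finite base-case verification: one must be careful that the smallest trees to which the decomposition step applies already satisfy the needed explicit bounds, and in the borderline range one cannot afford to lose the factor $\tfrac12$ against two pieces that are only known to have $P\geq 1$, so the concrete numerical lower bounds for stars and paths (which are comfortably larger than $2$) must be invoked for at least one of the two pieces.
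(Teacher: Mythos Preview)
Your overall strategy matches the paper's exactly: strengthen the hypothesis to $P(H)\geq 2$ for trees with at least $10$ vertices, use Lemma~\ref{tree decomposition} together with Lemma~\ref{P(H) of glued trees 2} to get $P(H)\geq \tfrac12 P(H_1)P(H_2)$, and close the induction once both pieces are guaranteed to have $\geq 10$ vertices.

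There is, however, a genuine gap in your treatment of the intermediate range and the base cases. You propose checking trees only up to $9$ edges and, for the awkward range where one piece may be small, bounding the small piece by $P\geq 1$ and the large piece by some $c$ ``which the star/path computations supply with room to spare,'' needing $c\geq 4$. This does not work: the pieces of the decomposition are arbitrary trees, not stars or paths, and the minimum of $P$ over \emph{all} trees on $m$ vertices is far below the star value --- indeed $\Pi(m)<4$ for every $m\leq 18$. So no single piece in this range can supply $c\geq 4$, and throwing away the small factor entirely is fatal. Concretely, for $m=11$ the decomposition yields two pieces each on $5$--$7$ vertices, with $\Pi$-values around $1.5$--$1.66$; their product is well below $4$, and $\tfrac12 P(H_1)P(H_2)$ does not reach $2$.

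The paper closes this gap by (i) pushing the direct computer check all the way to $m=18$ (enumerating all $123{,}867$ trees at $m=18$, not just stars and paths), and (ii) using the \emph{two-sided} numerical observation $\Pi(m_1)\Pi(m_2)\geq 4$ whenever $m_1+m_2\geq 20$ and $3\leq m_1,m_2\leq 17$, rather than discarding one factor. This handles $19\leq m\leq 27$; only from $m\geq 28$ onward does the decomposition guarantee both pieces have $\geq 10$ vertices and your clean inductive step applies verbatim.
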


\begin{proof}
By a computer program, we first checked the claim for trees on at most $18$ vertices. Let
$$\Pi(m)=\min_{H\in \mathcal{T}_m}P(H)$$
be the minimum of $P(H)$ among trees on $m$ vertices.

The table at the end of the proof summarizes our findings for $m\leq 18$. One key observation is that $\Pi(m)\geq 2$ for $10\leq m\leq 18$. The other important observation is that
$$\Pi(m_1)\Pi(m_2)\geq 4$$
if $m_1+m_2\geq 20$ and $3\leq m_1,m_2\leq 17$. Let $H$ be a tree with $m$ vertices such that $19\leq m\leq 27$. Then by Lemma~\ref{tree decomposition}, we can decompose it to trees $H_1$ and $H_2$ such that 
$$7\leq \Bigl\lceil\frac{m-1}{3}\Bigr\rceil+1\leq v(H_1),v(H_2)\leq \Bigl\lfloor\frac{2(m-1)}{3}\Bigr\rfloor+1\leq 18.$$
Since $v(H_1)+v(H_2)=m+1\geq 20$, we get that
$P(H)\geq \frac{1}{2}P(H_1)P(H_2)\geq 2$.
So the claim is true for trees on at most $27$ vertices. From now on, we proceed by induction on the number of vertices: we prove that $P(H)\geq 2$ if $H$ has at least $10$ vertices. Let $H$ be a tree on $m$ vertices.  As we have seen, the claim is true if $10\leq  m\leq 27$.  If $m\geq 28$, then we can decompose it into two trees $H_1$ and $H_2$ such that $v(H_1),v(H_2)\geq \frac{m-1}{3}+1\geq 10$, so by induction,
we have 
$P(H)\geq \frac{1}{2}P(H_1)P(H_2)\geq 2$.
This finishes the proof. 
\end{proof}

\bigskip

\begin{center}
\begin{tabular}{|c|c|c|}\hline
$m$ & number of trees & $\Pi(m)$ \\ \hline
2 & 1 & 1 \\ \hline
3 & 1 & 1.3333 \\ \hline
4 & 2 & 1.3611 \\ \hline
5 & 3 & 1.5111  \\ \hline
6 & 6 & 1.5766  \\ \hline
7 & 11 & 1.6585  \\ \hline
8 & 23 & 1.7958  \\ \hline
9 & 47 & 1.8640  \\ \hline
10 & 106 & 2.0589  \\ \hline
11 & 235 & 2.1546  \\ \hline
12 & 551 & 2.3426  \\ \hline
13 & 1301 & 2.4600  \\ \hline
14 & 3159 & 2.5990  \\ \hline
15 & 7741 & 2.8138  \\ \hline
16 & 19320 & 2.9519  \\ \hline
17 & 48629 & 3.1965  \\ \hline
18 & 123867 & 3.3424  \\ \hline
\end{tabular}
\end{center}
\bigskip

\begin{Rem}
The same proof gives that if $m\geq 10$, then
$$\Pi(m)>2\cdot 1.0001^{m-1},$$
so $\Pi(m)$ grows exponentially! It is worth comparing this result with Remark~\ref{complete-bipartite-linear} about balanced complete bipartite graphs.
\end{Rem}

\section{Improvement over $3$}
\label{sect: improvement 3}

This section aims to prove that if $x\geq 2.9243$, then for any bipartite graph $H$ without isolated vertices, we have
$$\widetilde{T}_H(x,0)\widetilde{T}_H(0,x)>\widetilde{T}_H(1,1)^2.$$
From now on let
$$P_x(H)=\widetilde{T}_H(x,0)\widetilde{T}_H(0,x).$$
Clearly, $P(H)$ that was introduced in the previous section is $P_2(H)$ with this notation.

\begin{Lemma} \label{leaf-deletion} Let $H$ be a bipartite graph and $v\in V(G)$.
Let $x\geq 1$ and suppose that $v$ has degree $1$. \\
\noindent (a) If $v\in A$, then
$$\widetilde{T}_H(x,0)\geq \frac{x+1}{2}\widetilde{T}_{H-v}(x,0).$$
\noindent (b) If $v\in B$, then
$$\widetilde{T}_H(x,0)\geq \frac{1}{2}\widetilde{T}_{H-v}(x,0).$$
In particular,
$$P_x(H)\geq \frac{x+1}{4}P_x(H-v).$$
\end{Lemma}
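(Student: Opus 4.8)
The plan is to use the probabilistic representation $\widetilde{T}_H(x,0)=\E\big[\prod_{i\in A}X_i\cdot\prod_{j\in B}Y_j\big]$ (with $y=0$, so the $Y_j$ are $0/1$-valued), together with the monotonicity of the relevant variables for $x\geq 1$, $y=0$. The key point is that deleting a degree-$1$ vertex $v$ changes the product in a controlled way: it removes the factor indexed by $v$, and it changes exactly one other factor, namely the one indexed by the unique neighbour $w$ of $v$, because $w$ loses $v$ from its neighbourhood while all other vertices are unaffected.

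First I would set up part (b), $v\in B$. Here the deleted factor is $Y_v$, which takes value $1$ with probability $\tfrac12$ (this is the probability that $v$, having a single neighbour, beats it) and value $0$ otherwise; more precisely $Y_v$ is the indicator that $v$ is not active. The neighbour $w$ lies in $A$; deleting $v$ can only help $w$ become internally active, so $X_w^{H}\le X_w^{H-v}$ pointwise — wait, one must be careful about the direction. For $x\ge 1$ the variable $X_w$ equals $x$ when $w$ beats all its neighbours and $1$ otherwise, so removing a neighbour can only switch $X_w$ from $1$ up to $x$; thus $X_w^{H-v}\ge X_w^{H}$. Conditioning on the randomness outside $y_v$ and using that $\{Y_v=0\}$ forces $w$ to have a larger neighbour in $H$ (hence $X_w^H=1$) while on $\{Y_v=1\}$ the two graphs agree at $w$, one gets
$$
\prod_{i\in A(H)}X_i\cdot\prod_{j\in B(H)}Y_j \;=\; Y_v\cdot X_w^H\cdot R \;\le\; Y_v\cdot X_w^{H-v}\cdot R,
$$
where $R=\prod_{i\in A\setminus w}X_i\prod_{j\in B\setminus v}Y_j$ is the common remainder, and since $\E[Y_v]=\tfrac12$ and $Y_v$ is independent-ish of the rest on the event that matters — more cleanly, $Y_v$ is a monotone increasing function while $X_w^{H-v}R$ is monotone increasing, so by Harris $\E[Y_v\cdot X_w^{H-v}R]\le \E[Y_v]\,\E[X_w^{H-v}R]$ is the \emph{wrong} direction; instead I would simply bound $Y_v\le 1$ after extracting the conditional expectation over $y_v$, giving $\widetilde{T}_H(x,0)\le \E[X_w^{H-v}R]=\widetilde{T}_{H-v}(x,0)$, which is an upper bound, not the claimed lower bound. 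So the correct move is the reverse: bound $\E\big[X_w^H\,R\big]\ge \E\big[X_w^{H-v}\,R\big]\cdot\E[Y_v]$ using Harris on the two increasing variables $Y_v$ and $X_w^{H-v}R$ — no. The clean argument is: condition on all variables except $y_v$; the remainder $\prod_{i\in A}X_i\prod_{j\in B\setminus v}Y_j$ does not depend on $y_v$ except through $X_w$, and averaging over $y_v\in[0,1]$ replaces $X_w$ by at least $\tfrac12 X_w^{H-v}$ (value $x$ on a set of measure $\ge$ the measure on which $w$ already beats its $H-v$-neighbours times $1$, value $1$ otherwise, and $Y_v=1$ exactly when $w$ does not beat $v$). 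Working this out gives $\widetilde{T}_H(x,0)\ge\tfrac12\widetilde{T}_{H-v}(x,0)$.

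For part (a), $v\in A$, the same bookkeeping applies with the roles flipped: the deleted factor is $X_v$, which equals $x$ with probability $\tfrac12$ and $1$ otherwise, so $\E[X_v]=\tfrac{x+1}{2}$; the affected neighbour $w\in B$ has $Y_w^{H-v}\ge Y_w^H$ pointwise, and conditioning on $x_v$ one shows the average over $x_v$ of $X_v\cdot Y_w^H$ dominates $\tfrac{x+1}{2}\,Y_w^{H-v}$ times the rest — this uses $x\ge 1$ so the ``$x$'' branch is the large one and lines up with the branch where $Y_w$ is forced to $1$. Then the ``in particular'' statement is immediate: applying (a) to $H$ and (b) to the side-swapped graph $H'$ (using Lemma~\ref{csere} to translate $\widetilde{T}_{H'}(x,0)=\widetilde{T}_H(0,x)$), or directly noting that $v$ has degree $1$ in $H$ on whichever side, we get one factor of $\tfrac{x+1}{2}$ from $\widetilde{T}_\bullet(x,0)$ and one factor of $\tfrac12$ from $\widetilde{T}_\bullet(0,x)$, multiplying to $\tfrac{x+1}{4}$.

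The main obstacle I expect is getting the conditioning argument in the right direction and genuinely rigorous: one must isolate the single variable ($y_v$ or $x_v$) attached to the leaf, observe that the rest of the product depends on it only through the unique neighbour's factor, and then carry out the one-dimensional average carefully, checking that the event ``leaf is active'' (probability $\tfrac12$) is precisely the complement of the event on which the neighbour's factor is forced to its small value. Handling the non-independence cleanly — the neighbour's factor in $H$ depends on $y_v$, the factor in $H-v$ does not — is the crux; everything else is routine.
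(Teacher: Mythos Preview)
Your setup is right, but the argument derails at the crucial step. In part (b) you write out $\prod = Y_v\cdot X_w^H\cdot R$, note $X_w^{H-v}\ge X_w^H$, and then say ``by Harris $\E[Y_v\cdot X_w^{H-v}R]\le \E[Y_v]\,\E[X_w^{H-v}R]$ is the \emph{wrong} direction''. This is backwards: for $x\ge 1$ and $y=0$ both $Y_v$ and $X_w^{H-v}R$ are monotone \emph{increasing}, so Harris gives $\ge$, which is exactly the direction you need. In fact $Y_v X_w^H = Y_v X_w^{H-v}$ holds pointwise (on $\{Y_v=1\}$ the vertex $w$ already beats $v$, so deleting $v$ leaves $X_w$ unchanged), so the chain $\widetilde T_H(x,0)=\E[Y_v X_w^{H-v}R]\ge \E[Y_v]\,\E[X_w^{H-v}R]=\tfrac12\widetilde T_{H-v}(x,0)$ is immediate. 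The paper's proof of (b) is essentially this, packaged as an application of the gluing lemma with $H_1=K_2$.

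The fallback you propose---integrate out the leaf variable and hope for a pointwise inequality in the remaining variables---does not work. In part (b), averaging over $y_v$ gives $\E_{y_v}[Y_v X_w^H] = x_w\cdot X_w^{H-v}$, and $x_w\ge\tfrac12$ is false pointwise. In part (a), averaging over $x_v$ gives $\E_{x_v}[X_v Y_w^H] = 1+(x-1)y_w$ on the event $\{Y_w^{H-v}=1\}$, which drops below $\tfrac{x+1}{2}$ whenever $y_w<\tfrac12$; so your claimed domination ``the average over $x_v$ of $X_v\cdot Y_w^H$ dominates $\tfrac{x+1}{2}\,Y_w^{H-v}$'' fails. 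The correct move for (a), and what the paper does, is: first use the pointwise bound $Y_w^H\ge Y_w^{H-v}$ (removing a neighbour of $w$ can only make $w$ active, i.e.\ push $Y_w$ from $1$ down to $0$), and \emph{then} apply Harris to split $X_v$ off from $Y_w^{H-v}R'$, yielding $\E[X_v]\cdot\widetilde T_{H-v}(x,0)=\tfrac{x+1}{2}\,\widetilde T_{H-v}(x,0)$. The order matters: pointwise replacement first, Harris second; bare conditioning on the leaf variable is not enough.
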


\begin{proof}
Let $u$ be the unique neighbour of $v$. We can think of $H$ as the graph obtained from glueing $K_2$ with $H-v$ at vertex $u$. If $v\in B$, then $u\in A$ and by the gluing lemma (Lemma \ref{P(H) of glued trees}), we have
$$\widetilde{T}_H(x,0)\geq \frac{1}{x}\widetilde{T}_{K_2}(x,0)\widetilde{T}_{H-v}(x,0)=\frac{1}{2}\widetilde{T}_{H-v}(x,0).$$
To prove part (a) of the lemma, we need a strengthening of the glueing lemma: if $u\in B$, then
$$\widetilde{T}_H(x,0)\geq \widetilde{T}_{K_2}(x,1)\widetilde{T}_{H-v}(x,0).$$
This strengthening only works because in $K_2$ there are no other vertices in $A$ apart from $v$. This inequality can be proved as follows. 
As before, we introduce the random variables $X_i$ and $Y_j$. In particular,
$$Y_{u}(\{x_i\}_{i \in A(H)},y_u)=\left\{ \begin{array}{ll}
0 & \mbox{if}\ \max_{i\in N_H(u)}x_i\leq 1-y_u,\\
1 & \mbox{if}\ \max_{i\in N_H(u)}x_i\geq 1-y_u,
\end{array} \right.$$
and similarly,
$$Y'_{u}(\{x_i\}_{i \in A(H-v)},y_{u})=\left\{ \begin{array}{ll}
0 & \mbox{if}\ \max_{i\in N_{H}(u)\setminus \{v\}}x_i\leq 1-y_{u},\\
1 & \mbox{if}\ \max_{i\in N_{H}(u)\setminus \{v\}}x_i\geq 1-y_{u}.
\end{array} \right.$$
Observe that $Y_u\geq Y'_u$ since if $Y_u=0$, then $Y'_u=0$ too. Furthermore, every function is monotone increasing. Then
$$
\widetilde{T}_H(x,0)=\E\left[ \prod_{i\in A(H)}X_i\cdot \prod_{j\in B(H)}Y_j\right] 
=\E\left[ X_vY_u \prod_{i\in A(H) \atop i\neq v}X_i\cdot \prod_{j\in B(H) \atop j\neq u}Y_j\right]
\geq $$
$$\E\left[ X_vY'_u \prod_{i\in A(H) \atop i\neq v}X_i\cdot \prod_{j\in B(H) \atop j\neq u}Y_j\right] \geq \E\left[ X_v\right] \cdot \E\left[Y'_u \prod_{i\in A(H) \atop i\neq v}X_i\cdot \prod_{j\in B(H) \atop j\neq u}Y_j\right]
=\frac{x+1}{2}\widetilde{T}_{H-v}(x,0).
$$

\end{proof}

\begin{Def}
We say that a graph $H$ is a minimal graph with respect to $P_x(\cdot)$ if it contains no induced subgraph $H'$ such that $P_x(H')<P_x(H)$. 
\end{Def}

\begin{Lemma} \label{excluded-subgraph}
Assume that $x>\frac{\sqrt{37}-1}{2}\approx 2.541381265$. Suppose that $H$ is a minimal graph with respect to $P_x(\cdot)$. Then $H$ cannot contain two vertices of degree $1$ connected to the same vertex. Furthermore, if we delete all degree $1$ vertices from $H$, then the obtained graph cannot contain any degree $1$ vertex.
\end{Lemma}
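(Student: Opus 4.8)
The plan is to exploit minimality together with the leaf-deletion bound of Lemma~\ref{leaf-deletion} and the gluing lemma in order to force a contradiction whenever the forbidden configurations occur.

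First I would address the claim that a minimal graph $H$ cannot contain two degree-$1$ vertices $v_1,v_2$ attached to the same vertex $u$. Suppose it does. Since $H$ is minimal, $P_x(H)\leq P_x(H')$ for every induced subgraph $H'$; in particular $P_x(H)\leq P_x(H-v_1)$ and also $P_x(H)\leq P_x(H-v_1-v_2)$. On the other hand, applying Lemma~\ref{leaf-deletion} twice (first removing $v_1$, then removing $v_2$, which still has degree $1$ in $H-v_1$ as long as $u$ had at least one other neighbour; if $u$ had only $v_1,v_2$ as neighbours then $H$ is just a path $P_3$ and the inequality can be checked directly) gives
$$P_x(H)\geq \frac{x+1}{4}P_x(H-v_1)\geq \left(\frac{x+1}{4}\right)^2 P_x(H-v_1-v_2).$$
Combining this with $P_x(H)\leq P_x(H-v_1-v_2)$ would yield $\left(\frac{x+1}{4}\right)^2\leq 1$, i.e. $x\leq 3$, which is not yet a contradiction; so this crude double application is too weak, and the real argument must be more careful. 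The better route is to note that $v_1$ and $v_2$ together with $u$ behave like attaching a copy of the star $K_{1,2}$ (rooted at the degree-$2$ vertex, which sits on the $A$-side here if $u\in B$, or on the $B$-side if $u\in A$). One computes $\widetilde{T}_{K_{1,2}}$ explicitly and then uses the gluing lemma exactly as in Lemma~\ref{leaf-deletion}; the point is that attaching a pendant path of length $2$ at $u$ multiplies $P_x$ by a factor strictly larger than $1$ once $x>\frac{\sqrt{37}-1}{2}$, because this threshold is precisely where the relevant quadratic in $x$ — which I expect to be something like $x^2+x-9\geq 0$ up to constants coming from the $\frac14$ factors — becomes nonnegative. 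So the key computation is: isolate the exact local factor $\phi(x)$ by which $P_x$ changes when the two-leaf gadget is removed, show $\phi(x)>1$ for $x>\frac{\sqrt{37}-1}{2}$, and conclude $P_x(H)>P_x(H-v_1-v_2)$, contradicting minimality.

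For the second assertion, let $H^{\flat}$ be $H$ with all its degree-$1$ vertices deleted, and suppose $H^{\flat}$ still has a degree-$1$ vertex $w$. Then in $H$, the vertex $w$ had some neighbours that were leaves of $H$ plus exactly one neighbour $u$ of higher degree; i.e. $w$ together with its pendant leaves forms a star, and $H$ is obtained by gluing this star (rooted at $w$) onto the rest at $u$. Using Lemma~\ref{leaf-deletion} repeatedly to strip the leaves hanging off $w$, and then once more to strip $w$ itself (now a degree-$1$ vertex), I would again extract a net multiplicative factor on $P_x$ and show it exceeds $1$ for $x$ above the stated threshold, contradicting minimality of $H$. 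Here one has to be a little careful about the bipartite side of $w$: if $w\in A$ each leaf-removal at $w$ contributes a factor $\frac{x+1}{4}$ via the ``in particular'' clause of Lemma~\ref{leaf-deletion}, whereas the final removal of $w$ itself (which lies opposite to $u$) contributes the other kind of factor; one then checks the product against $1$.

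The main obstacle I anticipate is getting the constant $\frac{\sqrt{37}-1}{2}$ to come out exactly right. The bound in Lemma~\ref{leaf-deletion} as stated, $P_x(H)\geq \frac{x+1}{4}P_x(H-v)$, is lossy — applying it twice only gives $x\leq 3$, not the sharper threshold — so the proof must instead use the \emph{directional} inequalities $\widetilde{T}_H(x,0)\geq \frac{x+1}{2}\widetilde{T}_{H-v}(x,0)$ and $\widetilde{T}_H(x,0)\geq \frac12\widetilde{T}_{H-v}(x,0)$ together with the gluing lemma applied to the explicit small gadget, tracking the two factors $\widetilde{T}_H(x,0)$ and $\widetilde{T}_H(0,x)$ separately and only multiplying at the end. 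The algebra reduces to verifying a single quadratic inequality in $x$ whose root is $\frac{\sqrt{37}-1}{2}$, namely (up to rearrangement) $x^2+x-9\geq 0$; identifying which gadget produces exactly this quadratic, and confirming the monotonicity/FKG hypotheses needed to glue it in, is the crux.
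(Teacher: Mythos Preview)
Your plan for the first assertion is essentially the paper's proof: compute $P_x(K_{1,2})=\frac{x}{3}\cdot\frac{x^2+x}{3}$, apply the gluing lemma (Lemma~\ref{P(H) of glued trees}) to write $H$ as $K_{1,2}$ glued at its centre with $H_1=H-\{v_1,v_2\}$, obtain $P_x(H)\geq \frac{1}{x}P_x(K_{1,2})P_x(H_1)=\frac{x^2+x}{9}\,P_x(H_1)$, and note that $\frac{x^2+x}{9}>1$ exactly when $x>\frac{\sqrt{37}-1}{2}$. So you have correctly located the gadget and the quadratic.

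Where your plan goes astray is the second assertion. Your proposed route---strip the leaves off $w$ one by one via Lemma~\ref{leaf-deletion}, then strip $w$---does not recover the right constant: even tracking the two sides separately, each leaf removal contributes one factor $\frac{x+1}{2}$ and one factor $\frac{1}{2}$, and the final removal of $w$ contributes the same pair with the roles swapped, so the net factor on $P_x$ is $\bigl(\frac{x+1}{4}\bigr)^{k+1}$, which is $\leq 1$ for all $x\leq 3$. No amount of bookkeeping on the bipartite side rescues this.

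The missing observation is that the second assertion is \emph{the same} $K_{1,2}$ argument. Once the first part is established, any vertex $w$ that becomes a leaf after deleting all leaves of $H$ must have had exactly one leaf neighbour $v$ (two would violate part one) and exactly one non-leaf neighbour $u$. Hence $\{v,w,u\}$ is an induced $P_3=K_{1,2}$ attached to the rest of $H$ through the single vertex $u$, and the identical gluing computation gives $P_x(H)\geq \frac{x^2+x}{9}\,P_x(H-\{v,w\})>P_x(H-\{v,w\})$, contradicting minimality. The paper states this as a single principle---$H$ cannot contain an induced $K_{1,2}$ pendant at a cut-vertex---and both conclusions drop out at once.
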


\begin{proof}
For $K_{1,2}$, we have
$$P_x(K_{1,2})=\widetilde{T}_{K_{1,2}}(x,0)\widetilde{T}_{K_{1,2}}(0,x)=\frac{x}{3}\cdot \frac{x^2+x}{3}.$$
If a bipartite graph $H$ contains $K_{1,2}$ as an induced subgraph, then we can think of $H$ as the graph obtained from glueing $K_{1,2}$ with an appropriate graph $H_1$. Then by the gluing lemma (Lemma \ref{P(H) of glued trees}), we have
$$P_x(H)\geq \frac{1}{x}P_x(K_{1,2})P_x(H_1)\geq \frac{x^2+x}{9}P_x(H_1).$$
Since $x>\frac{\sqrt{37}-1}{2}$, the largest zero of $x^2+x-9$, we get that
$P_x(H)>P_x(H_1)$ contradicting the minimality of $H$. Thus $H$ cannot contain $K_{1,2}$ as an induced subgraph, implying both parts of the theorem.
\end{proof}

\begin{Lemma} \label{comparison-with-compbip}
Let $H=(A,B,E)$ be a bipartite graph such that $|A|=a$ and $|B|=b$. If $x,y\geq 1$, then
$$\widetilde{T}_H(x,y)\geq \widetilde{T}_{K_{a,b}}(x,y).$$
\end{Lemma}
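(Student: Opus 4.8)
The plan is to prove $\widetilde{T}_H(x,y)\geq \widetilde{T}_{K_{a,b}}(x,y)$ for $x,y\geq 1$ by comparing the random-variable representations of the two polynomials. Recall from Lemma~\ref{vlsz} that, using i.i.d.\ uniform variables $x_1,\dots,x_m$ on $[0,1]$ attached to the vertices, we have $\widetilde{T}_H(x,y)=\E\big[\prod_{i\in A}x^{\mathbf 1[i\text{ active}]}\,\prod_{j\in B}y^{\mathbf 1[j\text{ active}]}\big]$, where $i\in A$ is active iff $x_i>\max_{j\in N_H(i)}x_j$ and symmetrically for $B$. The key structural observation is that adding edges to $H$ can only make it \emph{harder} for a vertex to be active: if $H\subseteq H'$ on the same vertex set with the same bipartition, then $N_H(v)\subseteq N_{H'}(v)$, so $\{v\text{ active in }H'\}\subseteq\{v\text{ active in }H\}$ pointwise in the sample. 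Since $K_{a,b}$ is obtained from any bipartite $H$ on classes of size $a,b$ by adding edges, this gives a pointwise domination of the indicator vectors.

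The main step is then to turn this pointwise domination of activity into an inequality of expectations. Fix a sample point $(x_1,\dots,x_m)$ and write $\sigma_A(H),\sigma_B(H)$ for the number of active vertices in $A$, resp.\ $B$, in $H$ at that point. We have shown $\sigma_A(K_{a,b})\leq\sigma_A(H)$ and $\sigma_B(K_{a,b})\leq\sigma_B(H)$ pointwise. Because $x\geq1$ and $y\geq1$, the function $(s,t)\mapsto x^s y^t$ is monotone increasing in each coordinate, hence $x^{\sigma_A(H)}y^{\sigma_B(H)}\geq x^{\sigma_A(K_{a,b})}y^{\sigma_B(K_{a,b})}$ at every sample point. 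Taking expectations and invoking Lemma~\ref{vlsz} on both sides yields $\widetilde{T}_H(x,y)\geq\widetilde{T}_{K_{a,b}}(x,y)$. (One should note that $\widetilde{T}_{K_{a,b}}$ indeed uses the same vertex set and bipartition; the ground set size $m=a+b$ matches, so the same probability space works for both.)

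One subtlety worth addressing: if $H$ has an isolated vertex $v\in A$, then $v$ is always active in $H$ but not in $K_{a,b}$ (where it acquires neighbours), so $\sigma_A(H)$ could exceed $\sigma_A(K_{a,b})$ by even more — this only helps the inequality, and with $x\ge 1$ it causes no problem; the claim does not require $H$ to be free of isolated vertices. The only genuine care needed is the pointwise containment of active sets under edge addition, which is immediate from $N_H(v)\subseteq N_{K_{a,b}}(v)$ and the definition: $v\in A$ active in $K_{a,b}$ means $x_v>x_j$ for \emph{all} $j\in N_{K_{a,b}}(v)\supseteq N_H(v)$, so in particular $x_v>x_j$ for all $j\in N_H(v)$, i.e.\ $v$ is active in $H$. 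I do not anticipate a real obstacle here; the proof is short and the FKG/Harris machinery is not even needed — only monotonicity of $x^sy^t$ and the deterministic containment of active sets. The step most likely to need a careful sentence is simply making explicit that $H$ and $K_{a,b}$ are compared on the \emph{same} vertex set with the \emph{same} bipartition, so that ``adding edges'' is the correct relation and the random variables $x_1,\dots,x_m$ serve both polynomials simultaneously.
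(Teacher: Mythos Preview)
Your proof is correct and takes essentially the same approach as the paper: both argue that adding an edge can only decrease the set of active vertices pointwise (per permutation in the paper, per sample of the uniform variables in your version), and then use $x,y\geq 1$ to conclude that $x^{\ia}y^{\ea}$ decreases term by term. The only cosmetic difference is that the paper phrases the pointwise comparison directly in terms of permutations rather than via Lemma~\ref{vlsz}.
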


\begin{proof}
Given a graph $H$ and a fixed permutation $\pi$, the addition of an edge $(i,j)$ not in $H$ may result in $i$ or $j$ ceased to be active but cannot create a new active vertex. Since $x,y\geq 1$, this means that $\widetilde{T}_H(x,y)$ decreases term by term after the addition of an edge. Hence
$$\widetilde{T}_H(x,y)\geq \widetilde{T}_{K_{a,b}}(x,y).$$

\end{proof}

\begin{Lemma} \label{growth-compbip}
Let $a+b=m$ and $x>2$. Then
$$\widetilde{T}_{K_{a,b}}(x,1) \widetilde{T}_{K_{a,b}}(1,x)\geq \left(\frac{x^2}{4(x-1)}\right)^m.$$
\end{Lemma}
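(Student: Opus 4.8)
The plan is to give an explicit closed form, or at least a clean lower bound, for $\widetilde{T}_{K_{a,b}}(x,1)$ using Lemma~\ref{lem:complete}, and then multiply the bound for $\widetilde{T}_{K_{a,b}}(x,1)$ by the symmetric bound for $\widetilde{T}_{K_{a,b}}(1,x)$. By Lemma~\ref{lem:complete},
$$\widetilde{T}_{K_{a,b}}(x,1)=\sum_{i=1}^{a}\frac{a(a-1)\cdots(a-i+1)\,b}{m(m-1)\cdots(m-i)}\,x^i+\sum_{j=1}^{b}\frac{b(b-1)\cdots(b-j+1)\,a}{m(m-1)\cdots(m-j)}.$$
The second sum telescopes: $\sum_{j=1}^{b}\frac{b(b-1)\cdots(b-j+1)a}{m(m-1)\cdots(m-j)}$ is exactly the probability that the top-ranked vertex of $K_{a,b}$ lies in $A$, which equals $\frac{a}{m}$ (one can also see this directly: it is $\widetilde{T}_{K_{a,b}}(0,1)=\mathbb{P}(I(A)\ge 1)$, and since at most one side can be active, this is $\mathbb{P}(\pi^{-1}(1)\in A)=a/m$). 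So
$$\widetilde{T}_{K_{a,b}}(x,1)=\frac{a}{m}+\sum_{i=1}^{a}\binom{a}{i}\Big/\binom{m}{i}\cdot\frac{b}{\,m-i\,}\,x^i,$$
and symmetrically $\widetilde{T}_{K_{a,b}}(1,x)=\frac{b}{m}+\sum_{j=1}^{b}\binom{b}{j}\big/\binom{m}{j}\cdot\frac{a}{m-j}\,x^j$.

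Next I would reduce to a single sum by keeping only the first ($i=1$) term of each geometric-like sum is too lossy; instead the natural move is to recognize the full sum. Write $p=a/m$, $q=b/m$. Then $\widetilde{T}_{K_{a,b}}(x,1)=\mathbb{E}\big[x^{I(A)}\big]$ where $I(A)\in\{0,1,\dots\}$ but in fact $I(A)\le 1$ is false — wait, $I(A)$ can exceed $1$ only when no $B$-vertex beats several $A$-vertices, which forces all of $B$ below those $A$-vertices; so $I(A)=i$ means the top $i$ ranks that are not in $B$... The cleanest route: $\widetilde{T}_{K_{a,b}}(x,1)=\mathbb{E}[x^{I(A)}]\ge \mathbb{E}[x\cdot\mathbf{1}[I(A)\ge 1]]+\mathbb{P}(I(A)=0)\cdot 1$ only uses $x\ge 1$ and convexity poorly. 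Instead I would use the probabilistic picture of Lemma~\ref{vlsz} directly: assign i.i.d.\ uniforms; then $\widetilde{T}_{K_{a,b}}(x,1)\ge \mathbb{E}[x^{I(A)}]$ and by Jensen (since $x>1$, $t\mapsto x^t$ is convex) $\mathbb{E}[x^{I(A)}]\ge x^{\mathbb{E}[I(A)]}$. Here $\mathbb{E}[I(A)]=\sum_{i\in A}\mathbb{P}(i\text{ active})=a\cdot\frac{1}{b+1}=\frac{a}{b+1}$. Thus $\widetilde{T}_{K_{a,b}}(x,1)\ge x^{a/(b+1)}$ and likewise $\widetilde{T}_{K_{a,b}}(1,x)\ge x^{b/(a+1)}$, giving $\widetilde{T}_{K_{a,b}}(x,1)\widetilde{T}_{K_{a,b}}(1,x)\ge x^{a/(b+1)+b/(a+1)}$. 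The exponent $\frac{a}{b+1}+\frac{b}{a+1}$ is minimized (for fixed $m$) at $a=b=m/2$ where it is about $\frac{m/2}{m/2+1}\cdot 2=\frac{m}{m/2+1}<2$, which is \emph{not} enough to reach $m$; so raw Jensen is too weak.

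The right approach is therefore to keep the sum and bound it below by a genuine $m$-th power. I would show $\widetilde{T}_{K_{a,b}}(x,1)\ge\big(\tfrac{x}{2(x-1)}\big)^{?}$ — more precisely, the target $\left(\frac{x^2}{4(x-1)}\right)^m$ factors as $\left(\frac{x}{2(x-1)}\right)^m\!\cdot\!\left(\frac{x}{2}\right)^m\cdot\frac{?}{}$; note $\frac{x^2}{4(x-1)}=\frac{x}{2}\cdot\frac{x}{2(x-1)}$ and also $=1+\frac{(x-2)^2}{4(x-1)}$, so the claim is $\widetilde{T}_{K_{a,b}}(x,1)\widetilde{T}_{K_{a,b}}(1,x)\ge\big(1+\tfrac{(x-2)^2}{4(x-1)}\big)^m$. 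This suggests proving a \emph{per-vertex} bound by induction on $m$ via the deletion recursion Lemma~\ref{rek}: $\widetilde{T}_{K_{a,b}}(x,y)=\frac1m\big(a\,\widetilde{T}_{K_{a-1,b}}(x,y)+b\,\widetilde{T}_{K_{a,b-1}}(x,y)\big)$. Writing $f(a,b)=\widetilde{T}_{K_{a,b}}(x,1)$, $g(a,b)=\widetilde{T}_{K_{a,b}}(1,x)$, and $c=\frac{x^2}{4(x-1)}$, I would set up the induction hypothesis $f(a,b)g(a,b)\ge c^{a+b}$ and try to close it using the recursion plus Cauchy--Schwarz, exactly as in the Proposition on $K_{a,b}$ earlier; the base cases $\min(a,b)$ small (or $a=0$ or $b=0$) are handled by the explicit formula $\widetilde{T}_{K_{0,b}}(x,1)=1$, $\widetilde{T}_{K_{1,b}}$ from the displayed formula, etc. The main obstacle I anticipate is that the naive Cauchy--Schwarz step loses a factor — $\frac1{m^2}(a+b)^2=1\ne c$ — so one needs a sharper, \emph{weighted} estimate that tracks the factor $c$ per deleted vertex; concretely one wants $f(a,b)\ge c_1^{a}c_2^{b}$-type bounds with $c_1c_2'\cdots$ multiplying to $c^2$, which likely requires separately estimating $f(a,b)/f(a-1,b)$ and $f(a,b)/f(a,b-1)$ using the structure of Lemma~\ref{lem:complete} (each ratio is of the form $1+\Theta(x/m)$). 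Handling that ratio uniformly in $a,b$, and checking the small cases, is where the real work lies; the rest is bookkeeping with the telescoping identity $\widetilde{T}_{K_{a,b}}(0,1)=a/m$.
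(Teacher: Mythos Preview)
Your proposal is not a proof but an exploration that never lands on a working argument. You correctly diagnose that raw Jensen on $\mathbb{E}[x^{I(A)}]$ is far too weak (the exponent $\tfrac{a}{b+1}+\tfrac{b}{a+1}$ is bounded rather than linear in $m$), and you correctly diagnose that the Cauchy--Schwarz induction via Lemma~\ref{rek} only reproduces $c^{m-1}$ rather than $c^m$, which is the wrong direction since $c=\tfrac{x^2}{4(x-1)}>1$ for $x>2$. But your fallback --- ``track the ratios $f(a,b)/f(a-1,b)$ and $f(a,b)/f(a,b-1)$ uniformly'' --- is not fleshed out, and there is no indication it can be carried through: those ratios depend delicately on both $a$ and $b$, and proving a uniform per-vertex gain of exactly $c$ from them would essentially require the full lemma. (There is also a small slip: the second sum in $\widetilde{T}_{K_{a,b}}(x,1)$ is $\widetilde{T}_{K_{a,b}}(0,1)=\mathbb{P}(I(A)=0)=b/m$, not $a/m$; $\widetilde{T}_H(0,1)$ counts permutations with \emph{no} active vertex in $A$.)

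The paper's proof follows an entirely different route that you did not consider. First it proves the bound only up to a subexponential multiplicative error $f(m)$: using Lemma~\ref{lem:complete}, it locates the maximal term $t_{i,0}(K_{a,b})x^i$ at $i\approx\frac{xa-m}{x-1}$, applies Stirling, and after setting $\alpha=a/m$ obtains $\widetilde{T}_{K_{a,b}}(x,0)\approx(\alpha^\alpha(1-\alpha)^{1-\alpha}x(x-1)^{\alpha-1})^m$; multiplying by the symmetric expression and minimizing over $\alpha$ gives $\bigl(\tfrac{x^2}{4(x-1)}\bigr)^m$ times a subexponential factor. The key idea you are missing is the second step, a \emph{tensor-power (self-improvement) trick}: $k$ disjoint copies of $K_{a,b}$ embed in $K_{ka,kb}$, so by Lemma~\ref{comparison-with-compbip} one has $\bigl(\widetilde{T}_{K_{a,b}}(x,1)\widetilde{T}_{K_{a,b}}(1,x)\bigr)^k\ge f(km)\,c^{km}$; taking $k$-th roots and letting $k\to\infty$ kills $f(km)^{1/k}\to 1$ and yields the exact inequality. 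This self-improvement step is what converts the soft Stirling estimate into a sharp bound, and nothing in your plan plays that role.
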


\begin{proof}
We first prove a slightly weaker result, namely, that 
$$\widetilde{T}_{K_{a,b}}(x,1) \widetilde{T}_{K_{a,b}}(1,x)\geq f(m)\left(\frac{x^2}{4(x-1)}\right)^m,$$
where $f(m)$ is a function such that for every $c<1$, we have $f(m)>c^m$ for large enough $m$. Later we remove $f(m)$ with a little trick. In this proof, $f(m)$ will be $m^{-K}$ for some $K$, but we will not specify $K$ as we will remove it anyway.

Recall from Lemma \ref{lem:complete} that
$$\widetilde{T}_{K_{a,b}}(x,y)=\sum_{i=1}^a\frac{a(a-1)...(a-i+1)b}{m(m-1)...(m-i)}x^i+\sum_{j=1}^b\frac{b(b-1)...(b-j+1)a}{m(m-1) \dots (m-j)}y^j.$$
Observe that
$\frac{t_{i+1,0}(K_{a,b})x^{i+1}}{t_{i,0}(K_{a,b})x^i}=\frac{(a-i)x}{m-i-1}.$
So the maximum of $t_{i,0}(K_{a,b})x^i$ is achieved at
$i=\left\lfloor \frac{xa-(m-1)}{x-1}\right\rfloor$
if this number is at least $1$, otherwise the maximal term is $t_{1,0}(K_{a,b})x=\frac{abx}{m(m-1)}.$ For the sake of simplicity, we just use the approximation given by Stirling's formula at $i=\frac{xa-m}{x-1}$ (we omit taking the integer part, this again only affects a polynomial multiplicative error):
$$t_{i,0}(K_{a,b})x^i\approx b\frac{a!(m-i)!}{(a-i)!m!}x^i\approx \frac{\left(\frac{a}{e}\right)^a\left(\frac{m-i}{e}\right)^{m-i}}{\left(\frac{a-i}{e}\right)^{a-i}\left(\frac{m}{e}\right)^m}x^i=\frac{a^a(m-i)^{m-i}}{(a-i)^{a-i}m^m}x^i,$$
where the approximations are up to subexponential terms. (After the first step we built $b$ into the subexponential term.)

Using the notation $a=\alpha m$, we have
$$i=\frac{x\alpha-1}{x-1}m,$$
and the above expression is further approximately equal to
\begin{align*}
\left(\frac{\alpha^{\alpha}\left(1-\frac{x\alpha-1}{x-1}\right)^{1-\frac{x\alpha-1}{x-1}}}{\left(\alpha-\frac{x\alpha-1}{x-1}\right)^{\alpha-\frac{x\alpha-1}{x-1}}}\right)^mx^i
&=\left(\frac{\alpha^{\alpha}(x(1-\alpha))^{x(1-\alpha)/(x-1)}}{(1-\alpha)^{\frac{1-\alpha}{x-1}}(x-1)^{1-\alpha}}\right)^mx^i\\
&=(\alpha^{\alpha}(1-\alpha)^{1-\alpha}x^{(1-\alpha)\frac{x}{x-1}}(x-1)^{\alpha-1})^mx^i\\
&=(\alpha^{\alpha}(1-\alpha)^{1-\alpha}x(x-1)^{\alpha-1})^m.
\end{align*}
So either $\alpha<\frac{1}{x}$ and $\widetilde{T}_{K_{a,b}}(x,0)>\frac{x}{m}$ or 
$\alpha\geq \frac{1}{x}$ and
$$\widetilde{T}_{K_{a,b}}(x,0)=f_1(m)(\alpha^{\alpha}(1-\alpha)^{1-\alpha}x(x-1)^{\alpha-1})^m,$$
where $f_1(m)$ is some subexponential term.

If we introduce the notation $b=\beta m$, then again 
either $\beta<\frac{1}{x}$ and $\widetilde{T}_{K_{a,b}}(0,x)>\frac{x}{m}$ or 
$\beta\geq \frac{1}{x}$ and
$$\widetilde{T}_{K_{a,b}}(0,x)=f_2(m)(\beta^{\beta}(1-\beta)^{1-\beta}x(x-1)^{\beta-1})^m,$$
where $f_2(m)$ is some subexponential term. Note that $\beta=1-\alpha$, \\
$\widetilde{T}_{K_{a,b}}(x,1)=\widetilde{T}_{K_{a,b}}(x,0)+\widetilde{T}_{K_{a,b}}(0,1),$
and $0<\widetilde{T}_{K_{a,b}}(0,1)<\widetilde{T}_{K_{a,b}}(1,1)=1$.

If $\alpha,\beta\geq 1/x$, then
$$\widetilde{T}_{K_{a,b}}(x,1)\widetilde{T}_{K_{a,b}}(1,x)=f_1(m)f_2(m)\left(\alpha^{2\alpha}(1-\alpha)^{2(1-\alpha)}\frac{x^2}{x-1}\right)^m,$$
and the minimum of $\alpha^{2\alpha}(1-\alpha)^{2(1-\alpha)}$ is at $\alpha=\frac{1}{2}$, where it is equal to $\frac{1}{4}$. So we get that 
$$\widetilde{T}_{K_{a,b}}(x,1)\widetilde{T}_{K_{a,b}}(1,x) \geq f(m)\left(\frac{x^2}{4(x-1)}\right)^m,$$
where $f(m)$ is some function that grows faster than any $c^m$ with $c<1$.

Suppose that either $\alpha<\frac{1}{x}$ or $\beta<\frac{1}{x}$. Since $x>2$, it cannot happen that both $\alpha,\beta<\frac{1}{x}$ as their sum is $1$. By symmetry, we can assume that $\beta<\frac{1}{x}$ and $\alpha>1-\frac{1}{x}$. Observe that the function $$h(\alpha):=\alpha^{\alpha}(1-\alpha)^{1-\alpha}x(x-1)^{\alpha-1}$$
is monotone increasing on the interval $\left[\frac{1}{x},1\right]$ as the derivative of its logarithm is \\ $\ln \left(\frac{\alpha}{1-\alpha}\right)+\ln(x-1)$ which is $0$ at $\alpha=\frac{1}{x}$ and positive if $\alpha>\frac{1}{x}$. Since $1-\frac{1}{x}>\frac{1}{x}$ we get that $h(\alpha)\geq h\left(1-\frac{1}{x}\right)=(x-1)^{1-2/x}$. We claim that
$$(x-1)^{1-2/x}\geq \frac{x^2}{4(x-1)}$$
if $x\geq 2$. This is equivalent to the fact that $\frac{1}{x}(x-1)^{1-1/x}\geq \frac{1}{2}$. Since they are equal at $x=2$, it is enough to show that for $x\geq 2$, the left-hand side is monotone increasing. Indeed, taking the derivative of its logarithm we get $\frac{\ln(x-1)}{x^2}\geq 0$. Hence, in this case, it is still true that
$$\widetilde{T}_{K_{a,b}}(x,1)\widetilde{T}_{K_{a,b}}(1,x) \geq f(m)\left(\frac{x^2}{4(x-1)}\right)^m.$$

Now let us take $k$ copies of $K_{a,b}$ and observe that $kK_{a,b}$ is a subgraph of $K_{ka,kb}$. Thus, by the previous lemma, we have
$$(\widetilde{T}_{K_{a,b}}(x,1)\widetilde{T}_{K_{a,b}}(1,x))^k=\widetilde{T}_{kK_{a,b}}(x,1)\widetilde{T}_{kK_{a,b}}(1,x)\geq \widetilde{T}_{K_{ka,kb}}(x,1)\widetilde{T}_{K_{ka,kb}}(1,x)\geq f(km)\left(\frac{x^2}{4(x-1)}\right)^{km}.$$
Hence
$$\widetilde{T}_{K_{a,b}}(x,1)\widetilde{T}_{K_{a,b}}(1,x)\geq f(km)^{1/k}\left(\frac{x^2}{4(x-1)}\right)^{m}$$
for every $k$. Since we have $f(km)>c^{km}$ for every $c<1$ and large enough $k$, we get that
$$\widetilde{T}_{K_{a,b}}(x,1)\widetilde{T}_{K_{a,b}}(1,x)\geq \left(\frac{x^2}{4(x-1)}\right)^{m}.$$

\end{proof}

\begin{Lemma} \label{monotonicity}
If $x\geq 1$, then the function
$$g(k,x):=\frac{\ln\left(1+\frac{x-1}{k+1}\right)}{\ln \left(1+\frac{1}{k}\right)}$$
is a monotone increasing function in $k$. In particular, $g(2,x)\leq g(k,x)$ if $k\geq 2$. 
\end{Lemma}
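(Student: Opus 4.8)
The plan is to remove the nested logarithm by the substitution $s=1/k$. Differentiating $g(k,x)$ directly in $k$ is unpleasant, since $\ln\!\left(1+\frac{x-1}{k+1}\right)$ is the logarithm of a rational function of $k$. Instead, putting $s=1/k$ one has $\frac{1}{k+1}=\frac{s}{1+s}$, hence
$$1+\frac{x-1}{k+1}=\frac{1+xs}{1+s},$$
so that
$$g(k,x)=\frac{\ln(1+xs)-\ln(1+s)}{\ln(1+s)}=\frac{\ln(1+xs)}{\ln(1+s)}-1.$$
Since $k\mapsto s=1/k$ is decreasing, it is enough to show that $\phi(s):=\frac{\ln(1+xs)}{\ln(1+s)}$ is monotone decreasing on $(0,\infty)$. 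If $x=1$ then $g\equiv 0$ and there is nothing to prove, so from now on assume $x>1$.

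Next I would differentiate. With $f(s)=\ln(1+xs)$ and $h(s)=\ln(1+s)$ we have $f'(s)=\frac{x}{1+xs}$, $h'(s)=\frac{1}{1+s}$, and $\phi'(s)=\frac{f'h-fh'}{h^{2}}$. As $h(s)>0$ and $(1+xs)(1+s)>0$, the inequality $\phi'(s)\le 0$ is equivalent to
$$x(1+s)\ln(1+s)\le (1+xs)\ln(1+xs).$$

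To prove this last inequality, introduce $q(p):=\frac{(1+p)\ln(1+p)}{p}$ for $p>0$. Multiplying out, the claim $x(1+s)\ln(1+s)\le(1+xs)\ln(1+xs)$ is exactly $xs\,q(s)\le xs\,q(xs)$, i.e. $q(s)\le q(xs)$; since $x>1$ this follows once we know that $q$ is increasing on $(0,\infty)$. And indeed
$$q'(p)=\frac{\bigl(\ln(1+p)+1\bigr)p-(1+p)\ln(1+p)}{p^{2}}=\frac{p-\ln(1+p)}{p^{2}}>0$$
for all $p>0$, because $\ln(1+p)<p$. Chaining the implications --- $q$ increasing $\Rightarrow$ the displayed inequality $\Rightarrow$ $\phi$ decreasing $\Rightarrow$ $g(\cdot,x)$ increasing --- proves the lemma; the final assertion is the special case $k\ge 2$ with base point $k=2$. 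All the inequalities above are strict for $x>1$, so in fact $g(\cdot,x)$ is strictly increasing when $x>1$.

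The only genuine obstacle is spotting the substitution $s=1/k$, which converts the logarithm of a quotient into a difference of logarithms and thereby un-nests the expression; after that everything reduces to the elementary estimate $\ln(1+p)<p$.
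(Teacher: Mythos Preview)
Your proof is correct and takes a genuinely different route from the paper. The paper differentiates $g$ directly in the variable $k$, obtaining
\[
\frac{\partial g}{\partial k}=\frac{(k+x)\ln\!\left(\frac{k+x}{k+1}\right)-k(x-1)\ln\!\left(\frac{k+1}{k}\right)}{k(k+1)(k+x)\bigl(\ln\frac{k+1}{k}\bigr)^{2}},
\]
and then shows the numerator is nonnegative by differentiating it in the \emph{second} variable $x$ and invoking $k\ln(1+1/k)\le 1$. You instead substitute $s=1/k$, which collapses the nested logarithm to the clean form $g=\frac{\ln(1+xs)}{\ln(1+s)}-1$ and reduces everything to the monotonicity of the single-variable function $q(p)=\frac{(1+p)\ln(1+p)}{p}$. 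Both arguments ultimately rest on the same elementary estimate $\ln(1+p)<p$, but your substitution buys a tidier computation (one differentiation instead of two, and no mixed two-variable argument), while the paper's direct approach avoids the need to spot the change of variables.
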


\begin{proof} Let us differentiate $g(k,x)$ in the variable $k$:
$$\frac{d}{dk}g(k,x)=\frac{(k+x)\ln\left(\frac{k+x}{k+1}\right)-k(x-1)\ln\left(\frac{k+1}{k}\right)}{k(k+1)(k+x)\left(\ln\left(\frac{k+1}{k}\right)\right)^2}.$$
We need to prove that it is positive if $x\geq 1$ and $k\geq 2$. If $x=1$, then the numerator is obviously $0$.  If we differentiate the numerator in the variable $x$, we get that
$$\frac{d}{dx}\left((k+x)\ln\left(\frac{k+x}{k+1}\right)-k(x-1)\ln\left(\frac{k+1}{k}\right)\right)=\ln\left(\frac{k+x}{k+1}\right)+1-k\ln\left(\frac{k+1}{k}\right).$$
This is obviously non-negative since 
$k\ln\left(\frac{k+1}{k}\right)\leq k\cdot \frac{1}{k}=1$. So the numerator is increasing in $x$, so it is always positive for $x\geq 1$ and $k\geq 2$.

\end{proof}

\begin{Th}
If $x\geq 2.9243$, then we have
$$\widetilde{T}_H(x,0)\widetilde{T}_H(0,x)>1$$
for any bipartite graph $H$ without isolated vertices.
\end{Th}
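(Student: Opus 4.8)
The plan is to argue by contradiction through a minimal counterexample. Suppose $x\ge 2.9243$ but $P_x(H):=\widetilde{T}_H(x,0)\widetilde{T}_H(0,x)\le 1$ for some bipartite graph $H$ without isolated vertices, and pick such an $H$ minimal with respect to $P_x(\cdot)$. By Lemma~\ref{comp} we may assume $H$ is connected (otherwise a bad component is a smaller counterexample), and $H\ne K_2$ since $P_x(K_2)=x^2/4>1$. Because $x>\frac{\sqrt{37}-1}{2}$, Lemma~\ref{excluded-subgraph} applies: no vertex of $H$ carries two pendant leaves, and deleting all degree-$1$ vertices leaves none. Combined with connectivity and $H\ne K_2$, this forces the clean structure that each leaf has a unique \emph{anchor}, each anchor has degree $\ge 3$ in $H$, and every other vertex has degree $\ge 2$.

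I would then set up two lower bounds for $P_x(H)$ that hold for every bipartite graph. Applying Lemma~\ref{lower-bound} with $y=0$ to both $\widetilde{T}_H(x,0)$ and $\widetilde{T}_H(0,x)$ gives
\[
P_x(H)\ \ge\ \prod_{v\in V(H)}\phi(d_v),\qquad \phi(d):=\frac{d(d+x)}{(d+1)^2},
\]
where $\phi(d)>1$ for every $d\ge2$ (indeed $\phi(2)=\tfrac{2(x+2)}{9}>1$ once $x>\tfrac52$), while $\phi(1)=\tfrac{x+1}{4}<1$ and $\phi(d)\to1$ as $d\to\infty$; Lemma~\ref{monotonicity} lets one rewrite $\phi(d)=(1+\tfrac1d)^{g(d,x)-1}$ and compare these factors across degrees. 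Separately, chaining Lemma~\ref{special rectangle} ($\widetilde{T}_H(x,0)\ge\widetilde{T}_H(x,1)\widetilde{T}_H(1,0)$ and its mirror), Lemma~\ref{comparison-with-compbip} ($\widetilde{T}_H(x,1)\ge\widetilde{T}_{K_{a,b}}(x,1)$), Lemma~\ref{growth-compbip} ($\widetilde{T}_{K_{a,b}}(x,1)\widetilde{T}_{K_{a,b}}(1,x)\ge(\tfrac{x^2}{4(x-1)})^{m}$) and, once more, Lemma~\ref{lower-bound} (for $\widetilde{T}_H(1,0)\widetilde{T}_H(0,1)\ge\prod_v\tfrac{d_v}{d_v+1}$) yields
\[
P_x(H)\ \ge\ \prod_{v\in V(H)}\psi(d_v),\qquad \psi(d):=\frac{x^2}{4(x-1)}\cdot\frac{d}{d+1},
\]
which is increasing in $d$ and tends to $\tfrac{x^2}{4(x-1)}>1$; and Lemma~\ref{leaf-deletion} provides the local companion $P_x(H)\ge\tfrac{x+1}{4}P_x(H-v)$ for a leaf $v$, while controlling its degree-$\ge3$ anchor.

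To conclude I would split $V(H)$ into the leaf--anchor pairs and the remaining (degree $\ge2$) vertices and show that every block of the relevant product beats $1$. A remaining vertex contributes $\phi(d_v)>1$ via the first bound. For a leaf--anchor pair with anchor of degree $d\ge3$: if $d$ is moderate one uses the first bound, the pair contributing $\phi(1)\phi(d)\ge\phi(1)\phi(3)=\tfrac{3(x+1)(x+3)}{64}$, which exceeds $1$ since $3(x+1)(x+3)>64$ at $x\ge2.9243$; if $d$ is large, $\phi(d)$ is too close to $1$ to absorb $\phi(1)<1$, so there one charges the anchor against the second bound (whose factor $\tfrac{x^2}{4(x-1)}\cdot\tfrac{d}{d+1}$ stays bounded away from $1$) and combines the two estimates, e.g.\ through a weighted geometric mean $P_x(H)=P_x(H)^{\theta}P_x(H)^{1-\theta}$ or a gluing argument peeling off the high-degree anchors. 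Fixing the degree threshold and the interpolation parameter so that every block simultaneously exceeds $1$ is the computation that produces the precise constant $2.9243$; it is not a round number because it is governed by the worst of these quantitative estimates, of the shape $\tfrac{x^2}{4(x-1)}\cdot\tfrac{d}{d+1}\ge1$ at the critical degree. Since $H$ is nonempty, this forces $P_x(H)>1$, contradicting the choice of $H$.

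The step I expect to be the main obstacle is the treatment of high-degree anchors. A leaf always contributes the factor $\tfrac{x+1}{4}<1$ (for $x<3$), which must be compensated by its anchor, yet under the correlation bound an anchor of large degree contributes only $\phi(d_u)\approx1$. This is precisely why Lemmas~\ref{comparison-with-compbip}, \ref{growth-compbip} (supplying the complete-bipartite estimate $\psi$, which does not decay to $1$) and \ref{monotonicity} (to compare contributions across degrees) are in the toolbox, and the delicate part is to splice the two estimates so that anchors of \emph{every} degree $\ge3$ can carry their pendant leaves; making that splice efficient is exactly what improves the constant from $3$ down to $2.9243$.
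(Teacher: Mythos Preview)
Your framework assembles exactly the right ingredients, but the execution plan has a genuine gap at the point you yourself flag as the main obstacle. You propose to split $V(H)$ into leaf--anchor pairs plus remaining vertices and then, for pairs with large anchor degree, ``charge the anchor against the second bound'' via a weighted mean $P_x(H)^\theta P_x(H)^{1-\theta}$. But both lower bounds are \emph{global} products, so any interpolation must use the \emph{same} $\theta$ for every vertex; and no single $\theta$ works. Indeed, for a leaf--anchor pair with anchor degree $d$, the combined factor is $(\phi(1)\phi(d))^\theta(\psi(1)\psi(d))^{1-\theta}$, and as $d\to\infty$ one has $\phi(1)\phi(d)\to\frac{x+1}{4}<1$ while $\psi(1)\psi(d)\to\frac{x^4}{32(x-1)^2}\approx 0.62$ at $x=2.9243$, so the pair's contribution is $<1$ for every $\theta\in[0,1]$. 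Thus high-degree anchors cannot absorb their pendant leaves under any weighted-mean splice of your two bounds on $H$, and the vague ``gluing argument peeling off the high-degree anchors'' does not repair this.

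The paper's proof avoids this by not trying to match leaves to individual anchors. Instead it strips \emph{all} $L$ leaves at once using Lemma~\ref{leaf-deletion}, paying a global factor $\bigl(\tfrac{x+1}{4}\bigr)^L$, and then bounds $P_x(H')$ for the remaining graph $H'$ of minimum degree $2$. On $H'$ the two bounds are reformulated in terms of the single scalar $C=\bigl(\prod_{v\in V'}(1+\tfrac1{d_v})\bigr)^{1/|V'|}$: your $\phi$-bound (via Lemma~\ref{monotonicity}) gives $P_x(H')\ge C^{(g(2,x)-1)|V'|}$, your $\psi$-bound gives $P_x(H')\ge\bigl(\tfrac{x^2}{4(x-1)}\bigr)^{|V'|}C^{-|V'|}$, and minimizing over $C$ yields
\[
P_x(H')\ \ge\ \Bigl(\tfrac{x^2}{4(x-1)}\Bigr)^{\frac{g(2,x)-1}{g(2,x)}\,|V'|}.
\]
Since $L\le |V'|$ (each leaf has a distinct anchor in $V'$), the leaf cost is absorbed by the \emph{aggregate} of all of $H'$, not by the individual anchors; the threshold $2.9243$ is precisely where $\tfrac{x+1}{4}\cdot\bigl(\tfrac{x^2}{4(x-1)}\bigr)^{(g(2,x)-1)/g(2,x)}=1$. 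Replacing your vertex-by-vertex pairing with this strip-then-aggregate argument closes the gap.
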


\begin{proof}
Recall that $P_x(H)=\widetilde{T}_H(x,0)\widetilde{T}_H(0,x)$. Suppose for contradiction that for some graph $H$ we have $P_x(H)<1$. We can also assume that $H$ is a minimal graph with respect to $P_x(\cdot)$.

Let $L$ denote the number of leaves of $H$. Let $H'$ be the bipartite graph obtained from $H$ by deleting all these leaves. By Lemma~\ref{leaf-deletion} for $H'$ we have
$$P_x(H)\geq \left(\frac{x+1}{4}\right)^LP_x(H').$$
Observe that we can assume by Lemma~\ref{excluded-subgraph} that we deleted at most one leaf pending from each vertex and that the resulting graph does not contain any leaves. So for the graph $H'=(V',E')$ we have $|V'|\geq L$  and its minimum degree is at least $2$.

By Lemmas \ref{special rectangle} and \ref{lower-bound}, we know that
\begin{align*}
\widetilde{T}_{H'}(x,0)\widetilde{T}_{H'}(0,x)&\geq \widetilde{T}_{H'}(x,1)\widetilde{T}_{H'}(1,x)\widetilde{T}_{H'}(1,0)\widetilde{T}_{H'}(0,1)\\
&\geq \widetilde{T}_{H'}(x,1)\widetilde{T}_{H'}(1,x)\prod_{v\in V'}\left(1-\frac{1}{d_v+1}\right). \\
&\geq \prod_{v\in V'}\left(1+\frac{x-1}{d_v+1}\right)\cdot \prod_{v\in V'}\left(1-\frac{1}{d_v+1}\right).
\end{align*}
Next, we practically distinguish the two cases whether 
$$\left(\prod_{v\in V'}\left(1-\frac{1}{d_v+1}\right)\right)^{-1}=\prod_{v\in V'}\left(1+\frac{1}{d_v}\right)$$
is small or (exponentially) large. If it is large, then we argue that 
$\prod_{v\in V'}\left(1+\frac{x-1}{d_v+1}\right)$ is even larger. If it is small, then we will argue that $\widetilde{T}_{H'}(x,1)\widetilde{T}_{H'}(1,x)$ is still exponentially large. So in both cases, $\widetilde{T}_{H'}(x,0)\widetilde{T}_{H'}(0,x)$ is exponentially large. The details are the following. 

Let 
$$C=\left(\prod_{v\in V'}\left(1+\frac{1}{d_v}\right)\right)^{1/|V'|}.$$
Observe that since every degree is at least $2$ in $H'$, we have
$$\prod_{v\in V'}\left(1+\frac{x-1}{d_v+1}\right)=
\prod_{v\in V'}\left(1+\frac{1}{d_v}\right)^{g(d_v,x)}\geq \left(\prod_{v\in V'}\left(1+\frac{1}{d_v}\right)\right)^{g(2,x)}$$
by Lemma~\ref{monotonicity}. Note that 
$$g(2,x)=\frac{\ln\left(1+\frac{x-1}{3}\right)}{\ln\left(\frac{3}{2}\right)}>1$$
since $x>5/2$. 
Hence we have
$$\widetilde{T}_{H'}(x,0)\widetilde{T}_{H'}(0,x)\geq \prod_{v\in V'}\left(1+\frac{x-1}{d_v+1}\right)\cdot \prod_{v\in V'}\left(1-\frac{1}{d_v+1}\right)\geq C^{(g(2,x)-1)|V'|}.$$
We can, of course, combine Lemma~\ref{comparison-with-compbip} and \ref{growth-compbip} and get that
$$\widetilde{T}_{H'}(x,1)\widetilde{T}_{H'}(1,x)\geq \left(\frac{x^2}{4(x-1)}\right)^{|V'|}.$$
This implies that
$$\widetilde{T}_{H'}(x,0)\widetilde{T}_{H'}(0,x)\geq \widetilde{T}_{H'}(x,1)\widetilde{T}_{H'}(1,x)\prod_{v\in V'}\left(1-\frac{1}{d_v+1}\right)\geq \left(\frac{x^2}{4(x-1)}\right)^{|V'|}\cdot C^{-|V'|}.$$
Now let us consider two cases.

\noindent Case 1. Suppose that $C^{g(2,x)}\geq \frac{x^2}{4(x-1)}$. Then 
$$\widetilde{T}_{H'}(x,0)\widetilde{T}_{H'}(0,x)\geq C^{(g(2,x)-1)|V'|}\geq \left(\frac{x^2}{4(x-1)}\right)^{\frac{g(2,x)-1}{g(2,x)}|V'|}.$$
\noindent Case 2. Suppose that $C^{g(2,x)}\leq \frac{x^2}{4(x-1)}$. Then
$$\widetilde{T}_{H'}(x,0)\widetilde{T}_{H'}(0,x)\geq \left(\frac{x^2}{4(x-1)}\right)^{|V'|}\cdot C^{-|V'|}\geq\left(\frac{x^2}{4(x-1)}\right)^{\frac{g(2,x)-1}{g(2,x)}|V'|}.
$$
So, after all, we get that
$$P_x(H)\geq \left(\frac{x+1}{4}\right)^LP_x(H')\geq \left(\frac{x+1}{4}\right)^L\left(\frac{x^2}{4(x-1)}\right)^{\frac{g(2,x)-1}{g(2,x)}|V'|}.$$
Note that $L\leq |V'|$ and for $x\geq 2.9243$, we have 
$$\frac{x+1}{4}\left(\frac{x^2}{4(x-1)}\right)^{\frac{g(2,x)-1}{g(2,x)}}>1.$$
Hence $P_x(H)>1$, contradiction.

\end{proof}

\section{The alternating function of a bipartite graph} \label{sect: alternating function}

In this section, we study a special coefficient of the permutation Tutte polynomial, which we call the alternating number of the graph.

\begin{Prop} \label{same}
Let $H=(A,B,E)$ be a bipartite graph without isolated vertices. Let $|A|=a$ and $|B|=b$. Then
$$t_{a,0}(H)=t_{0,b}(H).$$
\end{Prop}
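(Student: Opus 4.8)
The plan is to give a short bijective argument after unwinding what the two coefficients count. By Definition~\ref{main-def}, $m!\,t_{a,0}(H)$ is the number of permutations $\pi\in S_m$ with $\ia(\pi)=a$ and $\ea(\pi)=0$. The condition $\ia(\pi)=a$ says that \emph{every} $i\in A$ beats all its neighbours, i.e.\ $\pi(i)>\pi(j)$ for every edge $ij$ with $i\in A$ and $j\in B$. The observation to make first is that, since $H$ has no isolated vertices, this condition already forces $\ea(\pi)=0$: every $j\in B$ has some neighbour $i\in A$, and then $\pi(i)>\pi(j)$ shows $j$ is not externally active. Hence
\[
m!\,t_{a,0}(H)=\#\{\pi\in S_m:\ \pi(i)>\pi(j)\ \text{for every edge}\ ij,\ i\in A,\ j\in B\},
\]
and, running the identical computation with the roles of $A$ and $B$ interchanged,
\[
m!\,t_{0,b}(H)=\#\{\pi\in S_m:\ \pi(j)>\pi(i)\ \text{for every edge}\ ij,\ i\in A,\ j\in B\}.
\]

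Next I would exhibit the natural bijection between these two sets of permutations: the complementation map $\pi\mapsto\ovl\pi$, where $\ovl\pi(v)=m+1-\pi(v)$. This is an involution of $S_m$, and it turns each strict inequality $\pi(i)>\pi(j)$ into $\ovl\pi(i)<\ovl\pi(j)$, so it carries the first set bijectively onto the second. Therefore the two cardinalities coincide, which gives $t_{a,0}(H)=t_{0,b}(H)$.

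If one prefers the probabilistic language used throughout the paper, the same proof reads as follows. By Lemma~\ref{vlsz} (together with the no‑isolated‑vertices reduction above), $t_{a,0}(H)=\mathbb{P}(E)$, where $E$ is the event that $x_i>x_j$ for every edge $ij$ when the $x_v$ are i.i.d.\ $U(0,1)$; likewise $t_{0,b}(H)=\mathbb{P}(E')$ with $E'$ the event that $x_i<x_j$ for every edge. Since $(x_v)_{v\in V(H)}$ and $(1-x_v)_{v\in V(H)}$ have the same distribution, $\mathbb{P}(E)=\mathbb{P}(E')$, and the claim follows.

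I do not expect a genuine obstacle here. The only point that requires care is the implication ``$\ia(\pi)=a\Rightarrow\ea(\pi)=0$'', which is precisely where the hypothesis that $H$ has no isolated vertices is used; without it the equality can fail, since an isolated vertex in $A$ forces $t_{0,b}(H)=0$ while $t_{a,0}(H)$ may be positive (and symmetrically).
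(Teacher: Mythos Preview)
Your proof is correct and is essentially the same as the paper's: both use the complementation involution $\pi\mapsto\pi'$ with $\pi'(v)=m+1-\pi(v)$ to biject between permutations with all of $A$ active and those with all of $B$ active, and both note that the no-isolated-vertex hypothesis is what guarantees activity on one side kills activity on the other. Your write-up is slightly more explicit about the reduction step $\ia(\pi)=a\Rightarrow\ea(\pi)=0$, and the probabilistic paraphrase is a nice touch, but the underlying argument is identical.
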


\begin{proof}
Let $\pi \in S_m$ be a permutation such that $\ia(\pi)=a$ and $\ea(\pi)=0$. Since every vertex of $A$ is internally active in $\pi$, if $j \in B$ and $i \in N_{H}(j) \subseteq A$, we have $\pi(i)>\pi(j)$. So letting $\pi'(v) = m+1- \pi(v)$, we get a permutation in which every vertex of $B$ is externally active. Clearly, no vertex of $A$ remains internally active in $\pi'$, so $\ia(\pi')=0$ and $\ea(\pi')=b$. The bijection $\pi \mapsto \pi'$ shows that the coefficient of $x^a$ is the same as the coefficient of $y^b$ in $\widetilde{T}(x,y)$.     
\end{proof}

\begin{Cor} \label{is}
Let $H=(A,B,E)$ be a bipartite graph. Let $|A|=a$ and $|B|=b$. If $r$ is the number of isolated vertices in $A$, and $\ell$ is the number of isolated vertices in $B$, then
$$t_{a,\ell}(H)=t_{r,b}(H).$$
\end{Cor}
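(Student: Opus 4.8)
The plan is to reduce Corollary~\ref{is} to Proposition~\ref{same} by peeling off the isolated vertices using Lemma~\ref{comp}. First I would write $H$ as the disjoint union of its isolated vertices and the graph $H_0=(A_0,B_0,E)$ obtained from $H$ by deleting all isolated vertices, where $A_0$ has $a-r$ vertices and $B_0$ has $b-\ell$ vertices. Since $H_0$ has no isolated vertices, Lemma~\ref{comp} (applied $r+\ell$ times, once for each isolated vertex) gives
$$\widetilde{T}_H(x,y)=x^r y^\ell\,\widetilde{T}_{H_0}(x,y).$$
Reading off coefficients, this says $t_{i,j}(H)=t_{i-r,j-\ell}(H_0)$ for all $i\ge r$, $j\ge \ell$ (and $t_{i,j}(H)=0$ otherwise).

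Next I would apply Proposition~\ref{same} to $H_0$: since $H_0$ has no isolated vertices and its sides have sizes $a-r$ and $b-\ell$, we get
$$t_{a-r,\,0}(H_0)=t_{0,\,b-\ell}(H_0).$$
Now translate both sides back to $H$ via the coefficient shift: $t_{a,\ell}(H)=t_{a-r,(\ell)-\ell}(H_0)=t_{a-r,0}(H_0)$ and $t_{r,b}(H)=t_{(r)-r,b-\ell}(H_0)=t_{0,b-\ell}(H_0)$. Chaining the three equalities yields $t_{a,\ell}(H)=t_{r,b}(H)$, which is exactly the claim.

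There is essentially no obstacle here — the only point requiring a line of care is the bookkeeping: one must check that the exponents $a$ and $\ell$ (resp. $r$ and $b$) really are the ones that survive the factor $x^r y^\ell$, i.e. that $x^a y^\ell$ in $\widetilde{T}_H$ corresponds to $x^{a-r}y^0$ in $\widetilde{T}_{H_0}$ and $x^r y^b$ corresponds to $x^0 y^{b-\ell}$. This is immediate once one notes $a=(a-r)+r$ and $b=(b-\ell)+\ell$, so that $a-r$ and $b-\ell$ are precisely the top degrees in $x$ and $y$ of $\widetilde{T}_{H_0}$, matching the hypotheses of Proposition~\ref{same}. Thus the corollary follows with no further work.
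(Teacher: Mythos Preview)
Your proposal is correct and follows essentially the same approach as the paper: remove the isolated vertices using Lemma~\ref{comp} to get $\widetilde{T}_H(x,y)=x^r y^{\ell}\widetilde{T}_{H_0}(x,y)$, apply Proposition~\ref{same} to the isolate-free graph $H_0$ to obtain $t_{a-r,0}(H_0)=t_{0,b-\ell}(H_0)$, and then shift the indices back. Your write-up is in fact slightly more careful with the coefficient bookkeeping than the paper's own proof.
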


\begin{proof}

Let $H'$ be the graph we obtain from $H$ by removing all isolated vertices. Then Lemma \ref{comp} implies that $\widetilde{T}_{H}(x,y)=x^ry^{\ell}\widetilde{T}_{H'}(x,y)$. Using Proposition~\ref{same} for $H'$, we get that

$$t_{a-r,0}(H)=t_{0,b-\ell}(H),$$
hence
$$t_{a,\ell}(H)=t_{r,b}(H).$$

\end{proof}

Now we are ready to define the alternating number of a bipartite graph $H$.

\begin{Def}
Let $H=(A,B,E)$ be a bipartite graph. Let $|A|=a$ and $|B|=b$. Let $r$ denote the number of isolated vertices in $A$, and $\ell$ denote the number of isolated vertices in $B$. We define
$$\mathrm{alt}(H):=t_{a,\ell}(H)=t_{r,b}(H)$$
as the alternating number of $H$.
\end{Def}

\begin{Rem}
The notation $\alt(H)$ is originated from the fact that if $H=P_n$, then $\alt(P_n)=\frac{A_n}{n!}$, where $A_n$ denotes the number of alternating permutations. For instance, $A_5=16$. This is in complete accordance with $\alt(P_5)=\frac{2}{15}=\frac{16}{120}$ is the coefficient of $x^3$ and $y^2$ in the polynomial $\widetilde{T}_{P_5}(x,y)$ from Example \ref{example}.
\end{Rem}

\begin{Lemma}\label{altrek}
For any bipartite graph $H$ without isolated vertices,
$$\alt(H)=\frac{1}{m}\sum_{v\in V(A)}\alt(H-v)=\frac{1}{m}\sum_{v\in V(B)}\alt(H-v).$$

\end{Lemma}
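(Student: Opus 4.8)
The plan is to imitate the proof of Lemma~\ref{rek} with the observation that the special vertex $v(\pi)$ carrying the label $1$ can never be internally or externally active (it has a neighbour with a larger label, since $H$ has no isolated vertices), so deleting it does not change the activity status of any other vertex. First I would set $m=v(H)$ and, for a permutation $\pi\in S_m$, let $v(\pi)$ be the vertex with $\pi(v(\pi))=1$ and let $\alpha(\pi)$ be the induced permutation on $V(H)\setminus\{v(\pi)\}$, exactly as in Lemma~\ref{rek}. Since $v(\pi)$ is neither internally nor externally active in $\pi$, a permutation $\pi$ contributes to $m!\,\alt(H)=m!\,t_{a,0}(H)$ (using that $H$ has no isolated vertices, so $\ell=0$) precisely when $\alpha(\pi)$ witnesses all $a'$ vertices of $A(H-v(\pi))$ being internally active and none of $B(H-v(\pi))$ externally active, where $a'=|A(H-v(\pi))|$; that is, precisely when $\alpha(\pi)$ contributes to $\alt(H-v(\pi))$. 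Here I would note that $H-v$ may acquire an isolated vertex when $v\in V(A)$ (respectively $v\in V(B)$), but that is harmless: if $v\in A$ then $H-v$ has $a-1$ vertices on the $A$-side and $\ell'$ isolated vertices on the $B$-side, and $\alt(H-v)=t_{a-1,\ell'}(H-v)$ is by definition exactly the coefficient counting permutations with all non-isolated $A$-vertices internally active and all non-isolated $B$-vertices externally inactive — which is the condition just described.

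Next, summing over $\pi\in S_m$ and grouping by which vertex is $v(\pi)$: as $\pi$ runs through $S_m$, each vertex $v$ is $v(\pi)$ for exactly $(m-1)!$ permutations, and the map $\pi\mapsto\alpha(\pi)$ is a bijection from $\{\pi : v(\pi)=v\}$ onto $\mathrm{Sym}(V(H)\setminus\{v\})$. Hence
\begin{align*}
m!\,\alt(H) &= \sum_{\pi\in S_m} \mathbf{1}[\alpha(\pi)\text{ contributes to }\alt(H-v(\pi))] \\
&= \sum_{v\in V(H)} \sum_{\alpha\in \mathrm{Sym}(V(H)\setminus\{v\})} \mathbf{1}[\alpha\text{ contributes to }\alt(H-v)] \\
&= \sum_{v\in V(H)} (m-1)!\,\alt(H-v),
\end{align*}
which gives $\alt(H)=\frac{1}{m}\sum_{v\in V(H)}\alt(H-v)$.

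It remains to upgrade this to the two claimed refined sums, over $V(A)$ and over $V(B)$ separately. The cleanest route is to observe that when $v\in V(A)$ then $v$ can only be internally active, so $v(\pi)\in A$ is automatically inactive and the contribution argument goes through as above; but if $v\in B$ then in any $\pi$ with $\ia(\pi)=a$ the vertex $v$ is externally inactive (all its neighbours, lying in $A$, have larger labels because they are internally active), hence $\alt(H-v)$ with $v\in B$ records permutations where nothing changes either. Actually the sharper statement follows from Proposition~\ref{same}: for the graph $H$ with no isolated vertices we have $\alt(H)=t_{a,0}(H)=t_{0,b}(H)$, and I can run the whole deletion argument on the $t_{0,b}$ side, where now it is the vertex carrying the label $m$ (the largest) that is never active, and grouping by that vertex gives $\alt(H)=\frac{1}{m}\sum_{v\in V(H)}\alt(H-v)$ again. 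To separate $A$ and $B$: in the $t_{a,0}$ description the vertex with label $1$ must itself lie in $B$ if we want every $A$-vertex to be internally active? No — more simply, I would apply Lemma~\ref{csere} (side-swap) which sends $\alt(H)$ to $\alt(H')$ and swaps the two one-sided sums, so it suffices to prove one of them; and for $\alt(H)=\frac{1}{m}\sum_{v\in A}\alt(H-v)$, note that a permutation $\pi$ with $\ia(\pi)=a,\ea(\pi)=0$ has its label-$1$ vertex in $B$ only when... — this is the point I would have to check carefully. The main obstacle is exactly this last combinatorial bookkeeping: verifying that restricting the deletion sum to one side still recovers all of $m!\,\alt(H)$, i.e. that the terms with $v$ on the "wrong" side contribute zero or are already counted, and correctly tracking how $\alt(\cdot)$ is defined on $H-v$ once an isolated vertex appears. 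I expect this to reduce to the elementary remark that the label-$1$ vertex of an $(a,0)$-permutation is forced to lie in $B$ (its single smallest label means all neighbours exceed it, consistent with $B$-inactivity but \emph{in}consistent with $A$-internal-activity only if it had a neighbour, which it does), so the $A$-side sum must instead be obtained from the $t_{0,b}$ representation via the label-$m$ vertex — and symmetrically for the $B$-side sum — with Lemma~\ref{csere} tying the two together.
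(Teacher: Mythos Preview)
Your central displayed identity
\[
\alt(H)=\frac{1}{m}\sum_{v\in V(H)}\alt(H-v)
\]
is false: by the lemma itself, each of the two one-sided sums already equals $m\cdot\alt(H)$, so the full sum equals $2m\cdot\alt(H)$, and you can check this directly on $H=K_2$, where $\alt(K_2)=\tfrac12$ but both deletions give $\alt(K_1)=1$. The bug is the claimed equivalence ``$\pi$ contributes to $t_{a,0}(H)$ \emph{precisely when} $\alpha(\pi)$ contributes to $\alt(H-v(\pi))$''. When $v(\pi)\in A$, the vertex $v(\pi)$ carries label~$1$ and hence is not internally active, so $\ia_H(\pi)\le a-1$ and $\pi$ never contributes to $t_{a,0}(H)$; yet $\alpha(\pi)$ may perfectly well have all $a-1$ remaining $A$-vertices active and so contribute to $\alt(H-v(\pi))=t_{a-1,\ell'}(H-v(\pi))$. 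The implication you need holds only in one direction.

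You actually locate the fix in your last paragraph: for a permutation with $\ia(\pi)=a$ and $\ea(\pi)=0$, the label-$1$ vertex is forced to lie in $B$ (it has a neighbour with larger label, so it is not internally active; since all $a$ vertices of $A$ are internally active, it cannot lie in $A$). If you build this observation in from the start, the grouping by $v(\pi)$ ranges only over $B$ and yields the $B$-side identity directly; the $A$-side identity then follows from Lemma~\ref{csere}. The paper's proof gets there more cleanly by working at the coefficient level: apply Lemma~\ref{rek} to obtain $t_{a,0}(H)=\frac{1}{m}\sum_{v\in V(H)}t_{a,0}(H-v)$, observe that $t_{a,0}(H-v)=0$ for $v\in A$ because $|A(H-v)|=a-1$, and note that for $v\in B$ the graph $H-v$ has no isolated vertices on the $B$-side, so $t_{a,0}(H-v)=\alt(H-v)$.
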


\begin{proof}
By Lemma \ref{rek}, we have that 
\begin{align*}
\alt (H)&=t_{a,0}(H) \\
&= \frac{1}{m}\left(\sum_{v\in V(A)}t_{a,0}(H-v) +\sum_{v\in V(B)}t_{a,0}(H-v)\right) \\
&= \frac{1}{m}\sum_{v\in V(B)}t_{a,0}(H-v) \\
&= \frac{1}{m}\sum_{v\in V(B)}\alt (H-v).
\end{align*}
The third inequality holds because $|A(H-v)|<a$ for $v\in V(A)$, so $t_{a,0}(H-v)=0$. The last inequality follows from $|A(H-v)|=a$ and the fact that since $v\in V(B)$ and $H$ does not have any isolated vertices, $H-v$ does not have isolated vertices in $B(H-v)$. 

The proof of the second identity is analogous. 
\end{proof}

\begin{Lemma} \label{22}
For any bipartite graph $H=(A,B,E)$ with $|A|=a$ and $|B|=b$ vertices, we have
$$\widetilde{T}_H\left(x,\frac{x}{x-1}\right)=\mathrm{alt}(H)\frac{x^{m}}{(x-1)^b}.$$
In particular,
$$\widetilde{T}_H(2,2)=\mathrm{alt}(H)2^{m}.$$

\end{Lemma}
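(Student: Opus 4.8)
The plan is to prove the identity by strong induction on $m=v(H)$, using the two vertex-deletion recursions already established — Lemma~\ref{rek} for $\widetilde{T}_H$ and Lemma~\ref{altrek} for $\mathrm{alt}(H)$ — together with Lemma~\ref{comp} to strip off isolated vertices.

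The base case $m=0$ (the empty graph) is immediate: both sides equal $1$. For the inductive step, first suppose $H$ has an isolated vertex $w$. If $w\in A$, then Lemma~\ref{comp} gives $\widetilde{T}_H(x,y)=x\,\widetilde{T}_{H-w}(x,y)$, and since $\mathrm{alt}(H)=\mathrm{alt}(H-w)$, $|B(H-w)|=b$ and $v(H-w)=m-1$, the induction hypothesis for $H-w$ yields the claim for $H$ after multiplying by $x$; the case $w\in B$ is identical with an extra factor $\tfrac{x}{x-1}$ and with $|B(H-w)|=b-1$. So we may assume $H$ has no isolated vertices.

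In that case Lemma~\ref{rek} gives
$$\widetilde{T}_H\!\left(x,\tfrac{x}{x-1}\right)=\frac{1}{m}\sum_{v\in V(H)}\widetilde{T}_{H-v}\!\left(x,\tfrac{x}{x-1}\right),$$
and the induction hypothesis applies to each $H-v$ (which has $m-1$ vertices, and may now contain isolated vertices — that is allowed, since the statement is being proved in full generality). Writing $b_v=|B(H-v)|$, which equals $b$ for $v\in A$ and $b-1$ for $v\in B$, this becomes
$$\widetilde{T}_H\!\left(x,\tfrac{x}{x-1}\right)=\frac{x^{m-1}}{m\,(x-1)^{b}}\left(\sum_{v\in A}\mathrm{alt}(H-v)+(x-1)\sum_{v\in B}\mathrm{alt}(H-v)\right).$$
By Lemma~\ref{altrek} each of the two inner sums equals $m\,\mathrm{alt}(H)$, so the parenthesis equals $m\,\mathrm{alt}(H)\bigl(1+(x-1)\bigr)=x\,m\,\mathrm{alt}(H)$, and the right-hand side collapses to $\tfrac{\mathrm{alt}(H)\,x^{m}}{(x-1)^{b}}$. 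This closes the induction. The displayed special case $\widetilde{T}_H(2,2)=\mathrm{alt}(H)2^m$ is then the substitution $x=2$, for which $\tfrac{x}{x-1}=2$ and $(x-1)^b=1$.

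I do not expect a genuine obstacle here; the content is bookkeeping. The single point that needs attention is correctly tracking how the exponent of $(x-1)$ changes under vertex deletion — deleting a $B$-vertex lowers $|B|$ by one and hence contributes an extra factor $(x-1)$ in the numerator, which combines with the contribution $1$ from deleting an $A$-vertex to produce the factor $1+(x-1)=x$ — together with making sure that Lemmas~\ref{rek} and~\ref{altrek} are only invoked for $H$ itself (isolated-vertex-free at that point) while the induction hypothesis absorbs the graphs $H-v$, which need not be.
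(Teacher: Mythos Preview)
Your proof is correct and follows essentially the same approach as the paper: induction on $m$, peeling off isolated vertices via Lemma~\ref{comp}, and in the isolated-vertex-free case combining Lemma~\ref{rek} with the two halves of Lemma~\ref{altrek} so that the $A$-sum and the $B$-sum each contribute $m\,\mathrm{alt}(H)$ and together produce the factor $1+(x-1)=x$. The only cosmetic difference is that the paper starts the induction at $m=1$ rather than at the empty graph.
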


\begin{proof}

We use induction on $v(H)$. If $v(H)=1$, then $H=K_1$ and the single vertex is either in $A$, $a=1$, $b=0$ and 
$$\widetilde{T}_H\left(x,\frac{x}{x-1}\right)=x,$$ 
or the single vertex in $B$, $a=0$ and $b=1$, in which case
$$\widetilde{T}_H\left(x,\frac{x}{x-1}\right)=\frac{x}{x-1}.$$
Suppose that $H$ has $m>1$ vertices, and the statement holds for any graph on at most $m-1$ vertices.

First, suppose that $H$ has isolated vertices, and let $v$ be one of them. If $v\in A$, then $$\widetilde{T}_H\left(x,\frac{x}{x-1}\right)=x\widetilde{T}_{H-v}\left(x,\frac{x}{x-1}\right)=x\cdot \mathrm{alt}(H-v)\frac{x^{m-1}}{(x-1)^b}=\alt(H)\frac{x^{m}}{(x-1)^b}.$$
If $v\in B$, then
$$\widetilde{T}_H\left(x,\frac{x}{x-1}\right)=\frac{x}{x-1} \widetilde{T}_{H-v}\left(x,\frac{x}{x-1}\right)=\frac{x}{x-1}\mathrm{alt}(H-v)\frac{x^{m-1}}{(x-1)^{b-1}}=\alt(H)\frac{x^{m}}{(x-1)^b}.$$

If $H$ does not have isolated vertices, then Lemma \ref{rek} and Lemma \ref{altrek} implies that
\begin{align*}
    \widetilde{T}_H\left(x,\frac{x}{x-1}\right)&=\frac{1}{m}\left(\sum_{v\in V(A)}\widetilde{T}_{H-v}\left(x,\frac{x}{x-1}\right)+\sum_{v\in V(B)}\widetilde{T}_{H-v}\left(x,\frac{x}{x-1}\right)\right)\\
    &=\frac{1}{m}\sum_{v\in V(A)}\alt(H-v)\frac{x^{m-1}}{(x-1)^b}+\frac{1}{m}\sum_{v\in v(B)}\alt(H-v)\frac{x^{m-1}}{(x-1)^{b-1}}\\
    &=\alt(H)\frac{x^{m-1}}{(x-1)^b}+\alt(H)\frac{x^{m-1}}{(x-1)^{b-1}}\\
    &=\alt(H)\frac{x^{m}}{(x-1)^b}.
\end{align*}

\end{proof}

\begin{Rem}
A symmetric form of this statement is the following: if $(x-1)(y-1)=1$, then $$\widetilde{T}_H(x,y)=\alt(H)x^ay^b.$$
\end{Rem}

Recall that given edge weights $(w_e)_{e\in E(G)}$, the maximum weight spanning tree is the spanning tree that maximizes $\sum_{e\in E(T)}w_e$. Kruskal's algorithm provides a very fast way to find this tree.
The following proposition connects the quantity $\alt(H)$ with the problem of finding the maximum weight-spanning tree.

\begin{Prop}
Let $G$ be a graph. For each edge $e\in E(G)$, let us choose uniformly randomly an $x_e\in [0,1]$. Let $T$ be a spanning tree with local basis exchange graph $H[T]$. Then the probability that the maximum weight spanning tree with respect to the weights $(x_e)_{e\in E}$ is $T$ is exactly $\alt(H[T])$. 
\end{Prop}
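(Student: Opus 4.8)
The plan is to recast "the maximum weight spanning tree under the weights $(x_e)$ equals $T$" as an activity condition on the vertices of the local basis exchange graph $H[T]$, and then read off the probability directly from Lemma~\ref{vlsz}. Recall that the vertices of $H=H[T]$ are the edges of $G$; the side $A$ corresponds to the tree edges of $T$ and the side $B$ to the non-tree edges. Assign to each vertex $e$ the random value $x_e\sim U(0,1)$, chosen i.i.d. The key combinatorial fact I would invoke is the standard optimality characterization coming from Kruskal's (or the exchange) algorithm: a spanning tree $T$ is the \emph{unique} maximum weight spanning tree for a generic weighting exactly when, for every non-tree edge $f$ and every tree edge $e$ lying on the fundamental cycle of $f$ with respect to $T$, we have $x_e > x_f$. (If some $x_f$ exceeded the minimum weight on its fundamental cycle, swapping would increase the total weight; conversely, if no such improving swap exists, $T$ is optimal, and since the $x_e$ are almost surely distinct, it is the unique optimum.)

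Now I translate this condition into activity language. The pair $(e,f)$ with $e\in E(T)$, $f\notin E(T)$ is an edge of $H[T]$ precisely when $e$ lies on the fundamental cycle of $f$, equivalently $f$ lies in the fundamental cut of $e$. So the optimality condition "$x_e > x_f$ for every such incident pair" says exactly: every vertex $e\in A$ satisfies $x_e > x_f$ for all neighbours $f\in N_H(e)$, i.e. $x_e \ge x_{e'}$ for all $e'\in N_H(e)$ (the inequality is strict almost surely). In the notation of Lemma~\ref{vlsz}, this is the event $\{I(A) = |A| = a\}$ — every vertex on the $A$-side is ``internally active'' with respect to the random ordering induced by the $x_e$. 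Note that on this event automatically $I(B)=0$: a non-tree vertex $f$ cannot have $x_f$ larger than all its neighbours in $A$ once every $A$-vertex dominates all its neighbours. (This is the same exclusion phenomenon used in Proposition~\ref{same}.) Hence the event "$T$ is the max-weight spanning tree" coincides, up to a null set, with $\{I(A)=a,\ I(B)=0\}$.

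Finally, Lemma~\ref{vlsz} gives $\mathbb{P}(I(A)=a,\ I(B)=0) = t_{a,0}(H[T])$, and by definition $t_{a,0}(H[T]) = \alt(H[T])$ (using Proposition~\ref{same}, $t_{a,0}=t_{0,b}$, and that $H[T]$ has no isolated vertices when $G$ is $2$-connected — if $G$ has bridges or loops one works with the reduced graph $H'$ obtained by deleting isolated vertices, which does not change the value $\alt$). This is exactly the claimed probability. The only genuinely non-routine point is the first step: pinning down the precise statement of the Kruskal optimality criterion, in particular that "$T$ optimal" is equivalent to the \emph{cycle-based} local condition "$x_e$ beats every $x_f$ on each fundamental cycle" — one must check both directions (a violated inequality gives a strictly better tree by a single swap; absence of violations gives global optimality, e.g. via the matroid greedy exchange argument). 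Everything after that is a direct substitution into the already-established Lemma~\ref{vlsz}.
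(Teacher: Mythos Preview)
Your argument is correct and is exactly the intended one: the paper states this proposition without proof, merely mentioning Kruskal's algorithm in the sentence preceding it, so the implicit justification is precisely the cycle/cut optimality criterion you invoke together with Lemma~\ref{vlsz}. One small clean-up: rather than passing to the reduced graph $H'$, you can observe directly that the event ``$T$ is the maximum weight spanning tree'' is $\{I(A)=a,\ I(B)=\ell\}$ where $\ell$ is the number of isolated vertices in $B$ (loops are always active, bridges impose no constraint), and then $\mathbb{P}(I(A)=a,\ I(B)=\ell)=t_{a,\ell}(H[T])=\alt(H[T])$ by the very definition of $\alt$ in the general case.
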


\begin{Prop} \label{volume}
Let $H$ be a bipartite graph on $m$ vertices. Then $\alt(H)$ is the volume of the polytope determined by $0\le t_i\le 1$ $(1\le i\le m)$, $t_i+t_j\le 1$ if $(i,j)\in E(H)$, i.e the independent set polytope of $H$.
\end{Prop}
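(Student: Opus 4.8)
The plan is to realise $\alt(H)$ as a probability in the random-ordering model of Lemma~\ref{vlsz} and then to convert that probability into the volume of the polytope by a single measure-preserving change of variables.

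First I would eliminate isolated vertices. If $i$ is isolated in $H$, then the polytope constrains $t_i$ only by $0\le t_i\le 1$, so deleting $i$ does not change the volume; on the other hand Lemma~\ref{comp}, applied exactly as in the proof of Corollary~\ref{is}, gives $\widetilde{T}_H(x,y)=x^ry^{\ell}\widetilde{T}_{H'}(x,y)$ for $H'$ the graph with all isolated vertices removed, whence $\alt(H)=\alt(H')$. So it suffices to treat $H=(A,B,E)$ with no isolated vertex, in which case $\alt(H)=t_{a,0}(H)$ with $a=|A|$ by definition.

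Next I would invoke Lemma~\ref{vlsz}: with $x_1,\dots,x_m$ i.i.d.\ uniform on $[0,1]$ one has $\alt(H)=t_{a,0}(H)=\mathbb{P}(I(A)=a,\ I(B)=0)$. The point is that the event $\{I(A)=a\}$ — every $v\in A$ satisfies $x_v\ge x_{v'}$ for all $v'\in N_H(v)$, i.e.\ $x_v>x_w$ for every edge $(v,w)$ modulo ties — already forces $\{I(B)=0\}$ up to a null set, since any $w\in B$ (non-isolated) has a neighbour $v$ with $x_v\ge x_w$, so $w\notin I(B)$ unless $x_v=x_w$, an event of probability $0$. Hence
$$\alt(H)=\mathbb{P}\bigl(x_v>x_w\ \text{for every edge}\ (v,w)\in E\ \text{with}\ v\in A,\ w\in B\bigr).$$
Now I would apply the measure-preserving substitution $u_i=1-x_i$ for $i\in A$ and $u_i=x_i$ for $i\in B$: across an edge $(v,w)$ with $v\in A$, $w\in B$ the inequality $x_v>x_w$ becomes $u_v+u_w<1$. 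Therefore $\alt(H)$ is the Lebesgue measure of $\{u\in[0,1]^m:\ u_i+u_j<1\ \text{for all}\ (i,j)\in E(H)\}$, i.e.\ the volume of the stated polytope (its boundary is a null set). As a remark only, I would note that for bipartite $H$ this polytope is indeed the independent-set polytope, since the edge-incidence matrix of a bipartite graph is totally unimodular — but this identification is not needed for the volume claim.

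I do not expect a genuine obstacle. The only two points that need a moment's care are the reduction that removes isolated vertices (which uses $\widetilde{T}_H=x^ry^\ell\widetilde{T}_{H'}$) and the observation that for a graph without isolated vertices $\{I(A)=a\}\subseteq\{I(B)=0\}$ almost surely — which is precisely the reason the single coefficient $t_{a,0}(H)$, rather than a sum of several $t_{i,j}(H)$, computes $\alt(H)$.
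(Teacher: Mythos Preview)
Your proposal is correct and follows essentially the same route as the paper: invoke Lemma~\ref{vlsz} to read $\alt(H)$ as the probability that every $A$-vertex dominates its neighbours, then apply the measure-preserving substitution $u_i=1-x_i$ on one side to turn the order constraints $x_v>x_w$ into $u_v+u_w<1$. The only cosmetic difference is that you first strip off isolated vertices and then work with $t_{a,0}(H')$, whereas the paper keeps them and uses $\alt(H)=t_{a,\ell}(H)=\mathbb{P}(I(A)=|A|,\ I(B)=\ell)$ directly; both ways of handling isolated vertices are equivalent and immediate.
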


\begin{proof}
For $i\in A$, let $x_i\sim U(0,1)$ and $y_i=1-x_i$. Then $y_i\sim U(0,1)$. Let $\ell$ be the number of isolated vertices in $B$. From Lemma \ref{vlsz}, we have
\begin{align*}
    \alt(H)&=\mathbb{P}\left(I(A)=|A|, I(B)=\ell\right)\\
    &=\mathbb{P}\left(x_i\ge x_j \text{ if } i\in A, j\in B, (i,j)\in E(H) \right)\\
    &=\mathbb{P}\left(y_i+ x_j\le 1 \text{ if } i\in A, j\in B, (i,j)\in E(H) \right)\\
    &=\mathrm{Vol}\left(0\le t_i\le 1, t_i+t_j\le 1 \text{ if } (i,j)\in E(H) \right).
\end{align*}
\end{proof}

We note that Proposition~\ref{volume} leads to a definition of $\alt(G)$ for not necessarily bipartite graphs, and as Proposition~\ref{Stein-volume} shows it was already studied in the literature.

\begin{Def}
For any graph $G$, we define $\alt(G)=\mathbb{P}\left(x_i+x_j\le 1 \text{ if } (i,j)\in E(G) \right)$, where $x_i\sim U(0,1)$ i.i.d. random variables, or equivalently, \\ $\alt(G)=\mathrm{Vol}\left(\underline{x}\ |\ 0\le t_i\le 1, t_i+t_j\le 1 \text{ if } (i,j)\in E(G) \right)$.
\end{Def}

\begin{Prop}[Steingrimsson \cite{steingrimsson1994decomposition}] \label{Stein-volume}
For any graph $G$ on $n$ vertices which does not have isolated vertices, we have
$$\mathrm{alt}(G)=\frac{1}{2n}\sum_{i=1}^n\mathrm{alt}(G-v_i).$$
\end{Prop}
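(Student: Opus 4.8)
The plan is to mimic the proof of Lemma~\ref{altrek}, replacing the permutation-counting argument with the geometric/probabilistic description of $\alt(G)$. Recall that $\alt(G)=\mathbb{P}(x_i+x_j\le 1\text{ for all }(i,j)\in E(G))$ where the $x_i\sim U(0,1)$ are i.i.d. First I would condition on which vertex carries the smallest coordinate: let $v_k$ be the (almost surely unique) vertex with $x_{v_k}=\min_i x_i$. By symmetry, each vertex is the argmin with probability $\tfrac1n$, so
\begin{align*}
\alt(G)=\sum_{k=1}^n\mathbb{P}\bigl(x_i+x_j\le 1\ \forall (i,j)\in E(G)\ \big|\ v_k=\operatorname{argmin}\bigr)\cdot\frac1n.
\end{align*}

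The key step is to evaluate the conditional probability. Condition further on the value $x_{v_k}=t$; since $v_k$ is the minimum, the remaining $n-1$ coordinates are i.i.d. uniform on $[t,1]$. Because $G$ has no isolated vertices, every constraint involving $v_k$ reads $x_{v_k}+x_j=t+x_j\le 1$, i.e. $x_j\le 1-t$; but all edges at $v_k$ go to vertices whose coordinates already lie in $[t,1]$, and the binding requirement is that those neighbors lie in $[t,1-t]$. I would instead use the cleaner route: rescale the $n-1$ non-minimal coordinates by $u_j=(x_j-t)/(1-t)\in[0,1]$, i.i.d. uniform. An edge $(i,j)$ not meeting $v_k$ becomes $u_i+u_j\le \tfrac{1-2t}{1-t}$, which is \emph{not} of the required form, so this naive scaling fails — and this is exactly the main obstacle. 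The fix is to integrate over $t$ directly: the probability that all edges of $G-v_k$ are satisfied, with the $n-1$ coordinates uniform on $[t,1]$, is (after the substitution $u_j=1-x_j\in[0,1-t]$ for a bipartition-free symmetric reformulation) most transparently handled by noticing that $\alt(G)$ equals the volume of the polytope $P_G=\{x\in[0,1]^n: x_i+x_j\le1\ \forall (i,j)\in E\}$, and slicing $P_G$ by the hyperplanes $x_k=\min$.

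Concretely, I would write $\mathrm{Vol}(P_G)=\sum_{k=1}^n \mathrm{Vol}(P_G\cap\{x_k\le x_i\ \forall i\})$ and, on the $k$-th piece, substitute $x_i=x_k+y_i$ with $y_i\ge0$ for $i\ne k$. The constraints $x_i\le 1$ become $y_i\le 1-x_k$, the constraints $x_i+x_j\le 1$ (edges of $G-v_k$, none of which touch $v_k$) become $y_i+y_j\le 1-2x_k$, and the edges at $v_k$ give $y_j\le 1-2x_k$ automatically implied. Integrating out $x_k$ from $0$ to $1/2$ (beyond which the region is empty) and rescaling $y_i=(1-2x_k)z_i$ turns the inner volume into $(1-2x_k)^{n-1}\,\mathrm{Vol}(P_{G-v_k})$ — here using crucially that $G-v_k$ has no isolated vertices fed back into the definition, and that the $y_i\le 1-x_k$ constraints are dominated by $y_i+y_j\le 1-2x_k$ precisely because every $y_i$ has a neighbor (no isolated vertices). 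Thus
\begin{align*}
\mathrm{Vol}(P_G)=\sum_{k=1}^n\int_0^{1/2}(1-2t)^{n-1}\,dt\cdot\mathrm{Vol}(P_{G-v_k})
=\sum_{k=1}^n\frac{1}{2n}\,\mathrm{Vol}(P_{G-v_k}),
\end{align*}
which is the claimed recursion. The one subtlety to check carefully is the domination of the box constraints $y_i\le 1-t$ by the edge constraints $y_i+y_j\le 1-2t$ when $t\le 1/2$: if $y_i>1-t$ then for any neighbor $j$ of $i$ we would need $y_j<1-2t-(1-t)=-t<0$, impossible, so indeed $y_i\le 1-t$ is redundant — this is where the no-isolated-vertex hypothesis on $G-v_k$ (inherited from $G$ having no isolated vertices, so every non-minimal vertex still has a neighbor among the non-minimal vertices — wait, a neighbor could be $v_k$ itself) needs a moment's care; if $i$'s only neighbor is $v_k$ then the constraint $y_i\le 1-t$ is not redundant, but such an $i$ is isolated in $G-v_k$, contradicting the hypothesis, so the argument goes through. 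The main obstacle, then, is bookkeeping this redundancy correctly rather than any deep difficulty.
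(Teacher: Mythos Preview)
The paper does not supply its own proof of this proposition; it is quoted as a result of Steingr\'imsson, so there is nothing to compare against on that front. Your volume-slicing approach is the natural one and the computation
\[
\int_0^{1/2}(1-2t)^{n-1}\,dt=\frac{1}{2n}
\]
does give the recursion.

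There is, however, a real error in your final justification. You write that if a vertex $i\neq v_k$ has $v_k$ as its only neighbour, then $i$ is isolated in $G-v_k$, ``contradicting the hypothesis''. But the hypothesis is that $G$ has no isolated vertices; nothing prevents $G-v_k$ from having them (take $G$ a star with centre $v_k$). So as written your redundancy argument fails. The fix is already sitting in your own list of constraints: the edge $(v_k,i)$ gives $y_i\le 1-2t$, i.e.\ $z_i\le 1$, which is exactly the missing bound. You recorded this constraint (``the edges at $v_k$ give $y_j\le 1-2x_k$'') but then called it ``automatically implied'' and dropped it; it is not implied by the $G-v_k$ edge constraints, and it is precisely what closes the case of vertices isolated in $G-v_k$. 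Once you keep those constraints, the rescaled region is exactly $P_{G-v_k}$: every $i\neq v_k$ has some neighbour in $G$, and that neighbour --- whether it is $v_k$ or not --- forces $z_i\le 1$. With this correction the proof goes through cleanly.
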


\begin{Rem} Using the relation in the previous lemma, one can also generalize the second statement of Lemma~\ref{22} to not necessarily bipartite graphs.
\end{Rem}

We end this section with some simple observations.

\begin{Prop}
Let $G$ be a graph on $n$ vertices without isolated vertices. Then
$$\alt(K_n)\leq \alt(G)\leq \alt(S_n).$$
\end{Prop}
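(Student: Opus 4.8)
The plan is to interpret $\alt(G)$ probabilistically as in the preceding definition, namely $\alt(G)=\mathbb{P}(x_i+x_j\le 1 \text{ for all } (i,j)\in E(G))$ where the $x_i\sim U(0,1)$ are i.i.d. This makes the second inequality $\alt(G)\le \alt(S_n)$ nearly immediate: if $G$ has no isolated vertices, then it has a spanning subgraph isomorphic to a star $S_n$ (take any spanning connected subgraph, or even just a spanning forest with one big star — in fact one must be slightly careful, so I will instead argue via the event inclusion). Concretely, since $G$ has no isolated vertices, every vertex lies on some edge; pick a vertex $v$ and note... actually the clean route: the constraint region defining $\alt(G)$ is the intersection over all edges $(i,j)\in E(G)$ of the half-spaces $\{x_i+x_j\le 1\}$, so $\alt$ is \emph{monotone decreasing} under adding edges. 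Hence for the upper bound I would exhibit a graph with at most as many ``effective'' constraints; but a star is not a subgraph of an arbitrary $G$ in general. The correct statement to use is that among all graphs on $n$ vertices with no isolated vertex, the star (or any tree with this property — here $S_n$ denotes the star) has the \emph{fewest} constraints in the relevant sense, so the real content is an extremal argument, not mere monotonicity.

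So the actual plan splits into two independent parts. For the lower bound $\alt(K_n)\le \alt(G)$: since $G\subseteq K_n$ as a spanning subgraph (every graph on $n$ vertices is a subgraph of $K_n$), the defining constraint region for $\alt(K_n)$ is contained in that for $\alt(G)$, so the volume is no larger; this is immediate from edge-monotonicity of $\alt$. For the upper bound $\alt(G)\le \alt(S_n)$: here I would use the recursion of Proposition~\ref{Stein-volume} (Steingrimsson), $\alt(G)=\frac{1}{2n}\sum_{i=1}^n\alt(G-v_i)$, combined with induction on $n$. The base case $n=1$ or $n=2$ is trivial ($\alt(S_2)=\alt(K_2)=1/2$). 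For the inductive step, $G-v_i$ may have isolated vertices, which is fine — $\alt$ ignores isolated vertices, so $\alt(G-v_i)=\alt((G-v_i)')$ where $(G-v_i)'$ is obtained by deleting isolated vertices, and that has at most $n-1$ vertices; by induction $\alt(G-v_i)\le \alt(S_{n_i})$ for the appropriate size $n_i\le n-1$. Then I need the elementary monotonicity $\alt(S_k)\le \alt(S_{k-1})$... wait, that goes the wrong way — actually $\alt(S_k)$ as a function of $k$ is decreasing (more leaves means more constraints), so $\alt(G-v_i)\le\alt(S_{n-1})$ when $G-v_i$ has $n-1$ non-isolated vertices, but if it has fewer, $\alt$ could be \emph{larger}. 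So the induction needs the star case computed explicitly.

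Therefore the cleanest approach is: first compute $\alt(S_n)$ in closed form. If $S_n$ has center $c$ and leaves $\ell_1,\dots,\ell_{n-1}$, the constraints are $x_c+x_{\ell_j}\le 1$ for all $j$, i.e. $x_{\ell_j}\le 1-x_c$ for all $j$; conditioning on $x_c=t$, the leaves must all lie in $[0,1-t]$, which has probability $(1-t)^{n-1}$, so $\alt(S_n)=\int_0^1 (1-t)^{n-1}\,dt=\frac{1}{n}$. With this exact value, the upper bound becomes $\alt(G)\le \frac1n$ for every $n$-vertex $G$ without isolated vertices, which I would prove by induction using Steingrimsson's recursion: $\alt(G)=\frac{1}{2n}\sum_{i}\alt(G-v_i)$, and each $G-v_i$ has some number $n_i$ of non-isolated vertices with $1\le n_i\le n-1$, so by induction $\alt(G-v_i)\le \frac{1}{n_i}\le 1$... that only gives $\alt(G)\le \frac12$, too weak. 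The main obstacle is exactly this: Steingrimsson's recursion alone does not force the sharp $\frac1n$ bound, because deleting a vertex can create many isolated vertices and inflate the bound. I expect the resolution is a direct probabilistic/volume argument for the upper bound instead: among graphs without isolated vertices, the minimum number of edges is achieved (roughly) by a perfect matching or a near-star, and one should show directly that $\mathbb{P}(\bigcap_{(i,j)\in E}\{x_i+x_j\le1\})$ is maximized by the star. A slick way: since $G$ has no isolated vertices, fix a spanning subgraph $G'\subseteq G$ that is a forest with every vertex non-isolated (a spanning ``forest of stars''/maximal matching-extension); then $\alt(G)\le\alt(G')$, and for a disjoint union of stars $\alt$ multiplies over components (independence), giving $\alt(G')=\prod \frac{1}{k_j}$ where $\sum k_j = n$ over components of sizes $k_j\ge 2$; this product is maximized when there is a single component, i.e. $G'=S_n$, giving $\frac1n$. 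So the real plan for the upper bound is: (1) extract a spanning ``star forest'' subgraph $G'$ with no isolated vertex; (2) use edge-monotonicity $\alt(G)\le\alt(G')$; (3) factor $\alt(G')$ over components via independence; (4) observe $\prod_j \frac1{k_j}\le \frac1n$ when $\sum_j k_j=n$ and each $k_j\ge2$ (since $\prod k_j\ge \sum k_j$ when all $k_j\ge 2$ — indeed $\prod k_j\ge n$). The existence of such a spanning star forest is the one genuinely combinatorial point and is easy: take any spanning forest of $G$ and within each tree pick a center, or simply greedily assign each vertex to an incident edge. I anticipate the write-up obstacle is only making step (1) and the inequality $\prod k_j\ge\sum k_j$ precise; everything else is routine.

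\section*{Proof proposal}

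The plan is to use the probabilistic description $\alt(G)=\mathbb{P}\bigl(x_i+x_j\le 1 \text{ for all }(i,j)\in E(G)\bigr)$ with i.i.d.\ $x_i\sim U(0,1)$, together with two elementary facts: $\alt$ is monotone decreasing under adding edges (adding an edge intersects the feasible region with one more half-space), and $\alt$ is multiplicative over connected components (the constraints in different components involve disjoint sets of variables, hence are independent). Both lower and upper bounds will follow quickly from these.

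For the lower bound $\alt(K_n)\le \alt(G)$: every graph $G$ on $n$ vertices is a spanning subgraph of $K_n$, so by edge-monotonicity $\alt(G)\ge\alt(K_n)$. For the upper bound, I would first compute $\alt(S_n)$ exactly. Writing $c$ for the center of $S_n$ and $\ell_1,\dots,\ell_{n-1}$ for the leaves, the constraints are $x_{\ell_j}\le 1-x_c$ for all $j$; conditioning on $x_c=t$ gives probability $(1-t)^{n-1}$, so
\begin{equation*}
\alt(S_n)=\int_0^1(1-t)^{n-1}\,dt=\frac1n.
\end{equation*}

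It remains to prove $\alt(G)\le\frac1n$ for every $G$ on $n$ vertices with no isolated vertex. Since $G$ has no isolated vertex, we may pick for each vertex an incident edge; the union $G'$ of the chosen edges is a spanning subgraph of $G$ with no isolated vertex in which every connected component is a star (a component with a cycle or a path of length $\ge 3$ cannot arise, since each component is covered by edges each of which was "chosen by" one of its endpoints — more carefully, one may just take a spanning forest of $G$ and in each tree orient edges toward a fixed center and keep only edges incident to that center, obtaining a spanning star forest $G'\subseteq G$). Let the star components of $G'$ have sizes $k_1,\dots,k_r$ with $\sum_j k_j=n$ and each $k_j\ge 2$. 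By edge-monotonicity and multiplicativity,
\begin{equation*}
\alt(G)\le\alt(G')=\prod_{j=1}^r\alt(S_{k_j})=\prod_{j=1}^r\frac1{k_j}.
\end{equation*}
Finally, when $k_1,\dots,k_r\ge 2$ and $\sum_j k_j=n$ we have $\prod_j k_j\ge\sum_j k_j=n$ (immediate by induction on $r$, using $ab\ge a+b$ for $a,b\ge 2$), so $\prod_j\frac1{k_j}\le\frac1n=\alt(S_n)$, completing the proof.

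The only genuinely combinatorial point is the existence of the spanning star-forest subgraph $G'$ with no isolated vertex, which is easy (take a spanning forest of $G$ and in each tree delete all edges not incident to a chosen center vertex); everything else is the routine multiplicativity/monotonicity of $\alt$ and the inequality $\prod k_j\ge\sum k_j$ for $k_j\ge 2$. I expect that making the construction of $G'$ precise is the main — and only mild — obstacle.
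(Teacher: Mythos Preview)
Your approach is essentially the paper's: edge-monotonicity of $\alt$ gives the lower bound $\alt(K_n)\le\alt(G)$ immediately, and for the upper bound you reduce to a spanning star forest $G'\subseteq G$ and use $\alt(G')=\prod_j\frac{1}{k_j}\le\frac{1}{n}=\alt(S_n)$. The computation $\alt(S_n)=\frac1n$ and the inequality $\prod_j k_j\ge\sum_j k_j=n$ for $k_j\ge2$ are exactly what the paper does.

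There is, however, a genuine (if easily repaired) gap: both of your explicit constructions of $G'$ fail. Picking one incident edge per vertex need not yield a star forest; on $C_4$, if each vertex picks the edge to its clockwise neighbour you recover all of $C_4$. Keeping only the edges incident to a chosen ``center'' in each spanning tree leaves any vertex at distance $\ge 2$ from the center isolated; on $P_4$ with an internal vertex as center, one leaf of the path becomes isolated. The clean fix---and this is precisely how the paper argues---is to take an edge-minimal spanning subgraph $G'\subseteq G$ with no isolated vertices: in such a $G'$ every edge deletion isolates a vertex, so no component can contain a cycle (delete a cycle edge) or a $P_4$ (delete its middle edge), hence every component is a star. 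With this in place your proof goes through verbatim.
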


\begin{proof}
Observe that adding an edge only increases the number of constraints on the polytope  $\{ \underline{x}\ |\ 0\le x_i\le 1, x_i+x_j\le 1 \text{ if } (i,j)\in E(G)\}$ whence decreasing the volume. So the minimum of $\alt(\cdot )$ is achieved at $K_n$. The maximum is achieved at a graph for which the deletion of any edge results in an isolated vertex. Only a union of star graphs has this property. Indeed, any connected component should be a tree --otherwise, we can delete the edges outside of a spanning tree-- and if a tree is not a star, then it contains a path on $4$ vertices, and we would be able to delete the middle edge. For a union of stars, we have 
$$\alt(S_{n_1}\cup \dots \cup S_{n_k})=\prod_{i=1}^k\frac{1}{n_i}\leq \frac{1}{n}=\alt(S_n),$$
using that $n_i\ge 2$ for $i\in [k]$.
\end{proof}

\begin{Rem}
One can also prove that the minimum value of $\alt(T)$ among trees on $n$ vertices is achieved at the path $P_n$. We omit the proof of this fact as it is somewhat technical.
\end{Rem}

\section{Brylawski's identities} \label{sect: Brylawski}

Brylawski's identities \cite{brylawski1992tutte} describe the linear relationships between the coefficients of the Tutte polynomial. The following version is given in \cite{beke2023short}. 

\begin{Th}[Generalized Brylawski's identity for matroids ]\label{matr} Let $M$ be a matroid on the set $S$. Let $m$ denote the size of $S$ and $r$ the rank of $S$. Let $T_M(x,y)=\sum_{i,j}t_{i,j}(M)x^iy^j$ be the Tutte polynomial of $M$. Then for any integer $h \geq m$, we have
$$\sum_{i=0}^h\sum_{j=0}^{h-i}\binom{h-i}{j}(-1)^jt_{i,j}(M)=(-1)^{m-r}\binom{h-r}{h-m},$$
and for any integer $0 \leq h < m$, we have 
$$\sum_{i=0}^h\sum_{j=0}^{h-i}\binom{h-i}{j}(-1)^jt_{i,j}(M)=0.$$
\end{Th}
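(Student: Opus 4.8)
The idea is to collect the whole family of identities (all values of $h$ at once) into a single generating function in an auxiliary variable $z$, and to recognise that generating function as an explicit evaluation of $T_M$. Throughout, $S$ is the ground set, $m=|S|$, $r=r(S)$, and $\binom{n}{j}$ is the usual binomial coefficient, so $\binom{n}{j}=0$ for $j<0$ or $j>n$; write $s_h(M):=\sum_{i=0}^h\sum_{j=0}^{h-i}\binom{h-i}{j}(-1)^jt_{i,j}(M)$ for the left-hand side. Since $T_M$ is a genuine polynomial, only finitely many $t_{i,j}(M)$ are nonzero, so I may interchange the sums over $(i,j)$ and over $h$ freely. Using the elementary power-series identity $\sum_{h\ge 0}\binom{h-i}{j}z^h=z^i\sum_{\ell\ge j}\binom{\ell}{j}z^\ell=\dfrac{z^{i+j}}{(1-z)^{j+1}}$ (only terms with $h\ge i+j$ contribute), this gives
$$\sum_{h\ge 0}s_h(M)\,z^h=\sum_{i,j}t_{i,j}(M)(-1)^j\sum_{h\ge 0}\binom{h-i}{j}z^h=\frac{1}{1-z}\sum_{i,j}t_{i,j}(M)\,z^i\Bigl(\tfrac{-z}{1-z}\Bigr)^j=\frac{1}{1-z}\,T_M\!\left(z,\tfrac{-z}{1-z}\right).$$
So the whole theorem reduces to evaluating $T_M$ at the point $\bigl(z,\tfrac{-z}{1-z}\bigr)$.

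For that I would use the corank--nullity expansion $T_M(x,y)=\sum_{A\subseteq S}(x-1)^{r-r(A)}(y-1)^{|A|-r(A)}$. With $x=z$ and $y=\tfrac{-z}{1-z}$ one has $x-1=-(1-z)$ and $y-1=\tfrac{-1}{1-z}$, so the $A$-summand equals $(-1)^{r-r(A)}(1-z)^{r-r(A)}\cdot(-1)^{|A|-r(A)}(1-z)^{-(|A|-r(A))}=(-1)^{r+|A|}(1-z)^{r-|A|}$; the crucial point is that $r(A)$ cancels out of both the sign and the exponent. Hence, by the binomial theorem $\sum_{A\subseteq S}t^{|A|}=(1+t)^m$ applied with $t=\tfrac{-1}{1-z}$,
$$T_M\!\left(z,\tfrac{-z}{1-z}\right)=(-1)^r(1-z)^r\sum_{A\subseteq S}\left(\frac{-1}{1-z}\right)^{|A|}=(-1)^r(1-z)^r\left(\frac{-z}{1-z}\right)^m=(-1)^{m-r}\frac{z^m}{(1-z)^{m-r}},$$
using $(-1)^{r+m}=(-1)^{m-r}$. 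Combining with the previous display yields $\sum_{h\ge 0}s_h(M)z^h=(-1)^{m-r}\dfrac{z^m}{(1-z)^{m-r+1}}$.

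Finally I would extract coefficients: $(1-z)^{-(m-r+1)}=\sum_{k\ge 0}\binom{m-r+k}{m-r}z^k$, so
$$\sum_{h\ge 0}s_h(M)\,z^h=(-1)^{m-r}\sum_{k\ge 0}\binom{m-r+k}{m-r}z^{m+k}=(-1)^{m-r}\sum_{h\ge m}\binom{h-r}{h-m}z^h,$$
using $\binom{m-r+k}{m-r}=\binom{h-r}{m-r}=\binom{h-r}{h-m}$ with $h=m+k$. Comparing coefficients of $z^h$ gives $s_h(M)=(-1)^{m-r}\binom{h-r}{h-m}$ for $h\ge m$ and $s_h(M)=0$ for $0\le h<m$, which is exactly the claim. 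The only step that needs genuine care is the sign-and-exponent bookkeeping in evaluating $T_M$ — in particular the observation that the rank term $r(A)$ disappears, which is precisely what lets the sum over subsets of $S$ collapse via the binomial theorem; the rest is routine formal power-series manipulation. (Alternatively, one could establish $T_M\bigl(z,\tfrac{-z}{1-z}\bigr)=(-1)^{m-r}z^m(1-z)^{-(m-r)}$ by deletion--contraction and a short induction, but the direct expansion above seems shortest.)
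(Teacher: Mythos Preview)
Your proof is correct. The key move --- evaluating $T_M$ at a point on the hyperbola $(x-1)(y-1)=1$, where the corank--nullity sum collapses because $r(A)$ drops out --- is exactly the mechanism the paper attributes to \cite{beke2023short}; your generating-function packaging in $z$ is a clean way to encode all values of $h$ at once and makes the coefficient extraction transparent.

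The paper itself takes a different route: rather than proving Theorem~\ref{matr} directly, it proves the analogous identity (Theorem~\ref{br}) for the permutation Tutte polynomial $\widetilde T_H$ by induction on $v(H)$, using the vertex-removal recursion of Lemma~\ref{rek} together with Lemma~\ref{altrek} for $\alt(H)$, and then recovers Theorem~\ref{matr} via the decomposition $T_M=\sum_B \widetilde T_{H[B]}$ of Lemma~\ref{conn}. That approach is more in keeping with the paper's theme (the permutation Tutte polynomial as a tool for studying $T_M$) and yields the $\widetilde T_H$-version as a byproduct, at the cost of a longer case analysis. Your direct argument is shorter and entirely self-contained, but it is specific to $T_M$ and does not immediately give the $\widetilde T_H$ analogue --- though, as the paper notes, one could run the same hyperbola argument through Lemma~\ref{22} to get that too.
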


Here we give the corresponding statement to $\widetilde{T}_H(x,y)$. Note that since $T_M(x,y)$ can be written as a sum of the permutation Tutte polynomials of the local basis exchange graphs of $G$, this version implies Theorem~\ref{matr}. The proof of Theorem~\ref{matr} in \cite{beke2023short} relies on the fact that $T_M(x,y)$ simplifies to $(x-1)^{r(E)}y^{|E|}$ along the parabola $(x-1)(y-1)=1$. The variant of this statement holds for $\widetilde{T}_H(x,y)$, as Lemma~\ref{22} shows. So to prove Theorem~\ref{br} one only needs to modify the proof given in \cite{beke2023short}. Alternatively, one can prove the statement by induction, this is the proof that we give here.

\begin{Th}[Brylawski's identity]\label{br}
Let $H$ be a bipartite graph with  $m=a+b$ vertices. Let $\widetilde{T}_H(x,y)=\sum t_{i,j}(H)x^iy^j$. Then for any $h<v(H)$, we have
$$\sum_{i=0}^h\sum_{j=0}^{h-i}\binom{h-i}{j}(-1)^jt_{i,j}(H)=0.$$
Furthermore, if $h=v(H)+k$, $k\ge 0$, then we have 
$$\sum_{i=0}^h\sum_{j=0}^{h-i}\binom{h-i}{j}(-1)^jt_{i,j}(H)=(-1)^{b}\alt(H)\binom{b+k}{k}.$$
\end{Th}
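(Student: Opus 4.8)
The plan is to prove the identity by induction on $v(H)=m$, using the deletion recursion of Lemma~\ref{rek} together with Lemma~\ref{comp} for the isolated-vertex cases, exactly mirroring the inductive proof of Lemma~\ref{22}. For a fixed nonnegative integer $k$ and a bipartite graph $H$ with $a+b=m$ vertices, write
$$B_h(H)=\sum_{i=0}^h\sum_{j=0}^{h-i}\binom{h-i}{j}(-1)^jt_{i,j}(H),$$
so that the claim is that $B_h(H)=0$ for $h<m$ and $B_{m+k}(H)=(-1)^b\alt(H)\binom{b+k}{k}$ for $k\ge 0$. A clean way to organize the computation is to observe that $B_h(H)$ is, up to the bookkeeping of the index $h$, the coefficient extraction coming from evaluating $\widetilde{T}_H$ along a substitution. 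First I would establish the generating-function reformulation: $B_h(H)$ equals the coefficient of a suitable monomial in the polynomial obtained from $\widetilde T_H(x,y)$ by the substitution $x\mapsto 1$(shifted), $y\mapsto 1-z$, and more precisely I would show that
$$\sum_{h\ge 0}B_h(H)\,w^h \;=\; \frac{1}{1-w}\,\widetilde{T}_H\!\left(\frac{1}{1-w},\,\frac{-w}{1-w}\right)\cdot(1-w)^{?}$$
type identity — the exact shape is dictated by the fact that $\binom{h-i}{j}(-1)^j$ summed against $w^h$ produces geometric-series factors $w^i/(1-w)^{?}$ and $(-w)^j$.

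The decisive input is Lemma~\ref{22} in its symmetric form: along $(x-1)(y-1)=1$ we have $\widetilde{T}_H(x,y)=\alt(H)x^ay^b$. Setting $x=\tfrac{1}{1-w}$ forces $y$ on this parabola to be $y=1-(1-w)=w$, wait — I must be careful: $x-1=\tfrac{w}{1-w}$, so $y-1=\tfrac{1-w}{w}$, i.e. $y=\tfrac{1}{w}$, which is not quite the substitution appearing in $B_h$. The right bookkeeping instead comes from matching $B_h(H)$ to the coefficient of $x^0y^0$ (after a shift by $h$) in $(x+1)^{h}$-type expansions; concretely $\sum_{j}\binom{h-i}{j}(-1)^j y^j\big|$ is the binomial expansion of $(1-y)^{h-i}$, so
$$B_h(H)=[\text{coeff}]\ \sum_{i}t_{i,\bullet}(H)\,x^i(1-y)^{h-i}\Big|_{\text{after setting } x,y\ \text{appropriately}}.$$
The upshot I expect is that $B_h(H)$ is precisely the coefficient of $t^{h}$ in $\frac{t^m}{(1-t)^{b+1}}\alt(H)$ up to sign, and $[t^h]\frac{t^m}{(1-t)^{b+1}}=\binom{h-m+b}{b}=\binom{b+k}{b}=\binom{b+k}{k}$ when $h=m+k$, and $0$ when $h<m$. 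So the bulk of the proof is verifying this generating-function identity, and here Lemma~\ref{22} enters because $\widetilde T_H$ restricted to the relevant curve collapses to the monomial $\alt(H)x^ay^b$, producing exactly the factor $\frac{1}{(1-t)^{b}}$ after accounting for the $x$-part.

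For the induction itself: the base case $m=1$ is a direct check ($\widetilde T_{K_1}=x$ or $y$, and one computes $B_h$ by hand). For the inductive step with $H$ having an isolated vertex $v$, Lemma~\ref{comp} gives $\widetilde T_H=x\,\widetilde T_{H-v}$ (if $v\in A$) or $\widetilde T_H=y\,\widetilde T_{H-v}$ (if $v\in B$); multiplying by $x$ shifts $t_{i,j}$ to $t_{i-1,j}$, and one checks $B_h(H)=B_{h-1}(H-v)$, which advances the induction since $v(H-v)=m-1$ and the exponent $b$ is unchanged — matching $\binom{b+k}{k}$ with $h-1=(m-1)+k$. Multiplying by $y$ shifts $j\mapsto j-1$; here the sign $(-1)^j$ and the binomial $\binom{h-i}{j}$ interact, and after reindexing one should find $B_h(H)=B_{h-1}(H-v)-B_{h-1}'(\cdot)$ collapsing correctly to $(-1)^{b}\alt(H)\binom{b+k}{k}=(-1)^{b}\alt(H-v)\binom{(b-1)+(k+1)}{k+1}$, i.e. the index $k$ increases by one as $b$ decreases by one — this is the delicate Pascal-type identity $\binom{b+k}{k}=\binom{(b-1)+(k+1)}{k+1}$ which holds, so it works. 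Finally, if $H$ has no isolated vertex, apply Lemma~\ref{rek}: $\widetilde T_H=\frac1m\sum_{v}\widetilde T_{H-v}$, hence $B_h(H)=\frac1m\sum_v B_h(H-v)$; split the sum over $v\in A$ and $v\in B$, use the inductive hypothesis (noting $v(H-v)=m-1$ so $h=m+k=(m-1)+(k+1)$), and use Lemma~\ref{altrek} ($\alt(H)=\frac1m\sum_{v\in A}\alt(H-v)=\frac1m\sum_{v\in B}\alt(H-v)$) together with the binomial identity $\binom{b+k}{k}=\binom{b+k+1}{k+1}-\binom{b+k}{k+1}$ — wait, the identity actually needed is $\binom{(b-1)+(k+1)}{k+1}+\binom{b+(k+1)}{k+1}\cdot(\text{something})$, which I would pin down during the writeup; it is a routine Vandermonde/Pascal manipulation. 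I expect the main obstacle to be exactly this last bookkeeping: correctly tracking how $a$, $b$, $k$, and $h$ shift under each of the three operations and confirming the binomial coefficients reconcile; the conceptual content is entirely carried by Lemma~\ref{22} and Lemma~\ref{altrek}, and no genuinely new idea beyond careful induction is required.
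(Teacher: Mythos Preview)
Your inductive strategy (base case, isolated-vertex reductions via Lemma~\ref{comp}, and the isolate-free case via Lemma~\ref{rek} and Lemma~\ref{altrek}) is exactly the paper's approach, and your handling of the $A$-isolated case and of the isolate-free case is correct: for the latter the needed Pascal identity is indeed $\binom{b+k+1}{k+1}-\binom{b+k}{k+1}=\binom{b+k}{k}$. The genuine gap is the $B$-isolated case. The identity you assert, $\binom{b+k}{k}=\binom{(b-1)+(k+1)}{k+1}$, is false --- the right side is $\binom{b+k}{k+1}$. If you actually carry out the reindexing for an isolated $v\in B$ you get, via one Pascal step, $B_h(H)=B_{h-1}(H)-B_{h-1}(H-v)$; this involves $B_{h-1}$ of the \emph{same} graph $H$, so a straight induction on $m$ does not close. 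You would need either a secondary induction on $h$, or to telescope and invoke the hockey-stick identity $\sum_{s=0}^{k}\binom{b-1+s}{s}=\binom{b+k}{k}$. The paper sidesteps this completely: it never treats $B$-isolated vertices as a separate reduction, but in the ``no isolated vertex in $A$'' case strips off all $\ell$ of them at once (using $t_{i,j}(H)=t_{i,j-\ell}(H')$) and applies Lemma~\ref{rek} to the isolate-free $H'$.

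Separately, the generating-function approach you sketch and then abandon is actually the cleanest proof and is worth salvaging. One has
\[
\sum_{h\ge 0}B_h(H)\,w^h=\frac{1}{1-w}\,\widetilde{T}_H\!\left(w,\ \frac{-w}{1-w}\right),
\]
and since $\bigl(w-1\bigr)\bigl(\tfrac{-w}{1-w}-1\bigr)=1$, the point lies on the curve of Lemma~\ref{22}, giving $\widetilde{T}_H\bigl(w,\tfrac{-w}{1-w}\bigr)=\alt(H)\,w^a\bigl(\tfrac{-w}{1-w}\bigr)^b=(-1)^b\alt(H)\,w^m(1-w)^{-b}$. Hence $\sum_h B_h(H)w^h=(-1)^b\alt(H)\,w^m(1-w)^{-(b+1)}$, and coefficient extraction yields $B_h(H)=0$ for $h<m$ and $B_{m+k}(H)=(-1)^b\alt(H)\binom{b+k}{k}$ immediately. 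The paper notes that this route (modelled on \cite{beke2023short}) is available but chooses the induction.
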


\begin{proof}
The proof is by induction on $v(H)$. If $H$ has only one vertex, then the sum becomes $1$ if $v\in A$ and $-h$ if $v\in B$. It is easy to check that these values satisfy the formulae given in the theorem.

Now suppose that we know the identities for bipartite graphs with at most $m-1$ vertices, and let $H$ be any bipartite graph on $m$ vertices. 
First, we look at the case when there is an isolated vertex $v\in A$. Then by Lemma~\ref{comp} we have $t_{i,j}(H)=t_{i-1,j}(H-v)$, and so 
$$ \sum_{i=0}^h\sum_{j=0}^{h-i}\binom{h-i}{j}(-1)^jt_{i,j}(H)=\sum_{i=1}^h\sum_{j=0}^{h-i}\binom{h-i}{j}(-1)^jt_{i,j}(H)$$ $$=\sum_{i=1}^h\sum_{j=0}^{h-i}\binom{h-i}{j}(-1)^jt_{i-1,j}(H-v)=\sum_{i'=0}^{h-1}\sum_{j=0}^{h-1-i'}\binom{h-1-i'}{j}(-1)^jt_{i',j}(H-v).$$
This implies that neither side changes by adding an isolated vertex to $A$.

Now consider the case when there are no isolated vertices in $A$. Let $\ell \geq 0$ denote the number of isolated vertices in $B$. Let $S=\{v_1, v_2, \dots, v_{\ell} \}$ be the set of these isolated vertices. Let $H'$ be the graph induced by the vertex set $V\setminus S$.

Then $t_{i,j}(H)=t_{i,j-\ell}(H')$, so if $h=m+k$, $k\ge 0$, then using first Lemma~\ref{comp} to remove the vertices of $S$ from $H$, and then applying Lemma~\ref{rek} to $H'$ we get that
$$\sum_{i=0}^h\sum_{j=0}^{h-i}\binom{h-i}{j}(-1)^jt_{i,j}(H)=\sum_{i=0}^h\sum_{j=0}^{h-i}\binom{h-i}{j}(-1)^jt_{i,j-\ell}(H')=$$ 
$$=\frac{1}{m-\ell}\sum_{w\in V(H')}\sum_{i=0}^h\sum_{j=0}^{h-i}\binom{h-i}{j}(-1)^jt_{i,j-\ell}(H'-w)=\frac{1}{m-\ell}\sum_{w\in V(H')}\sum_{i=0}^h\sum_{j=0}^{h-i}\binom{h-i}{j}(-1)^jt_{i,j}(H-w).$$
Now we distinguish several cases. First consider the case when $h=v(H)+k$, where $k\geq 0$. Then
we can use the induction hypothesis. We have  $h=(v(H)-1)+(k+1)$, and if $w\in A$, then $|B(H-w)|=|B(H)|$ does not change, and if $w\in B$, then $|B(H-w)|=|B(H)|-1$. So by induction, we have
$$=\frac{1}{m-\ell}\left(\sum_{w\in A}\alt(H-w)(-1)^b\binom{b+k+1}{k+1}+\sum_{w\in B-S}\alt(H-w)(-1)^{b-1}\binom{b+k}{k+1}\right)$$  
$$=\frac{1}{m-\ell}\left(\sum_{w\in A}\alt(H'-w)(-1)^b\binom{b+k+1}{k+1}+\sum_{w\in B-S}\alt(H'-w)(-1)^{b-1}\binom{b+k}{k+1}\right)$$ 
In the last step, we used that adding isolated vertices does not change the value of $\alt(\cdot)$, that is, $\alt(H'-w)=\alt(H-w)$. Next we use Lemma~\ref{altrek}.
$$=\frac{(-1)^b\alt(H')\cdot (m-\ell)}{m-\ell}\cdot \left(\binom{b+k+1}{k+1}-\binom{b+k}{k+1}\right)=(-1)^b\alt(H)\binom{b+k}{k}.$$
If $h\le m-2$, then we can simply use induction.
$$\sum_{i=0}^h\sum_{j=0}^{h-i}\binom{h-i}{j}(-1)^jt_{i,j}(H)=\sum_{i=0}^h\sum_{j=0}^{h-i}\binom{h-i}{j}(-1)^jt_{i,j-\ell}(H')$$ 
$$=\frac{1}{m-\ell}\sum_{w\in H'}\sum_{i=0}^h\sum_{j=0}^{h-i}\binom{h-i}{j}(-1)^jt_{i,j-\ell}(H'-w)=\frac{1}{m-\ell}\sum_{w\in H'}\sum_{i=0}^h\sum_{j=0}^{h-i}\binom{h-i}{j}(-1)^jt_{i,j}(H')$$
$$=\frac{1}{m-\ell}\sum_{w\in H'}0=0.$$
Finally, if $h=m-1$, then we can combine the induction hypothesis with Lemma~\ref{altrek}. (Below we skipped the steps using $\alt(H-w)=\alt(H'-w)$ and $\alt(H')=\alt(H)$ for sake of brevity.)
$$\frac{1}{m-\ell}\sum_{w\in H'}\sum_{i=0}^h\sum_{j=0}^{h-i}\binom{h-i}{j}(-1)^jt_{i,j}(H-w)$$
$$=\frac{1}{m-\ell}\left(\sum_{w\in A}(-1)^b\alt(H-w)+\sum_{w\in B-S}(-1)^{b-1}\alt(H-w)\right)=\alt(H)-\alt(H)=0,$$
which completes the proof.
\end{proof}

\begin{Prop}
If $H$ is connected and $t_{i,j}(H)>0$ for some $i,j$, then $t_{i'j'}(H)>0$ for all $i' \leq i, j' \leq j, (i',j') \neq (0,0)$.
\end{Prop}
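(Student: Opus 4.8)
The statement is a ``staircase'' positivity result: if one coefficient $t_{i,j}(H)$ is positive, then so is every coefficient lying to the lower-left of it (excluding the constant term). My plan is to prove this directly from Definition~\ref{main-def} by exhibiting, for a given permutation $\pi$ witnessing $t_{i,j}(H)>0$, a modified permutation witnessing $t_{i',j'}(H)>0$ for any target $(i',j')$ with $i'\le i$, $j'\le j$, $(i',j')\ne(0,0)$. The key observation is that activity of a vertex $v$ under $\pi$ depends only on the \emph{relative order} of $\pi(v)$ and the values $\pi(w)$ for $w\in N_H(v)$; so local surgery on a permutation, done carefully, can turn off a chosen active vertex without switching on any currently-inactive one.

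\textbf{Key steps.} First I would record the elementary monotonicity fact: if $u$ is internally active (say $u\in A$) under $\pi$, and $\pi'$ is obtained from $\pi$ by decreasing $\pi(u)$ to a value still consistent as a permutation, then $u$ may cease to be active, and no vertex that was inactive becomes active --- moving a single value \emph{down} can only destroy activities, never create them (and symmetrically, moving a value up only destroys activities on the opposite side). The cleanest implementation: take an active vertex $u$ whose $\pi$-value is the global maximum $m$ (there is always at least one active vertex with value $m$, on whichever side it lies); then ``rotate'' by composing with the appropriate transposition structure, i.e.\ relabel so that $u$ receives value $1$ instead of $m$, shifting everything else up by one modulo the block $[1,m]$. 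Under this new permutation $u$ is certainly not active; every other vertex's activity is governed by the same relative comparisons as before \emph{except} those involving $u$, and since $u$ went from largest to smallest, a neighbor $w$ of $u$ can only lose activity (its max-neighbor value went up), never gain it. Hence the multiset of active vertices under the new permutation is contained in (active set of $\pi$) $\setminus\{u\}$, so some coefficient $t_{i-1,j}$ or $t_{i,j-1}$ is positive depending on whether $u\in A$ or $u\in B$. Iterating, I can decrease either $i$ or $j$ by one at each step, but I must be able to decrease \emph{whichever} of the two coordinates I still need to decrease. This is where connectedness of $H$ enters.

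\textbf{The main obstacle.} The difficulty is that the greedy ``kill the vertex with value $m$'' move only lets me decrease the coordinate on that vertex's side; I need to show I can always steer toward \emph{both} $i'$ and $j'$. Concretely: starting from a permutation with active set of type $(i,j)$, I want to reach type $(i',j')$. Since $(i',j')\ne(0,0)$, at least one coordinate stays positive; say I must keep at least one internally active vertex. I will argue: as long as the current type $(p,q)$ has $p>i'$, there exists a permutation of type $(p-1,q)$ (or possibly we overshoot and land on type $(p-1,q')$ with $q'\le q$, which by induction is also fine) --- and the existence of \emph{some} active vertex in $A$ that we are allowed to remove is guaranteed because $p>i'\ge 0$ means $p\ge1$. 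The truly delicate point, where connectedness is used, is ruling out a ``stuck'' configuration where the only active vertices are all on one side but we need to reduce the other side's count --- but reducing a side's count from $q$ to $q-1$ when $q>q'$ just needs one active vertex on that side, which exists since $q\ge1$. So in fact the argument reduces $i$ down to $i'$ first (removing internally active vertices one at a time, each move legitimate since there is always a value-$m$ vertex we can demote, and if it happens to be in $B$ when we still want to reduce $A$, we need a swap argument). The genuinely non-trivial lemma I expect to have to prove is: \emph{if $t_{p,q}(H)>0$ with $p\ge1$, then $t_{p-1,q'}(H)>0$ for some $q'\le q$} --- and its $B$-side mirror --- and here connectedness guarantees that the local surgery propagates, i.e.\ that after demoting an active vertex we don't inadvertently need to track a disconnected piece; in a connected $H$ every vertex has a neighbor, so the ``value $m$'' vertex of the \emph{restricted} sub-order is handled uniformly. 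I would then finish by downward induction on $i+j$: reduce to $(i',q)$ for some $q\le j$, and if $q>j'$, reduce the second coordinate the same way, being careful that when $i'=0$ we still have $j'\ge1$ so there is always a surviving active vertex to anchor the construction, so none of the intermediate moves is forced to empty both sides simultaneously.
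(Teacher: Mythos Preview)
Your ``elementary monotonicity fact'' is false, and this breaks the whole iterative scheme. Take the simplest case $H=K_2$ with $u\in A$, $v\in B$, $\pi(u)=2$, $\pi(v)=1$: here $u$ is active and $v$ is not. After your cyclic demotion ($u\mapsto 1$, $v\mapsto 2$), the vertex $v$ \emph{becomes} active. In general, when you push the value at $u$ down, every neighbour $w$ of $u$ sees its maximum-neighbour constraint \emph{relaxed}, not tightened, so $w$ can gain activity. Your parenthetical ``its max-neighbor value went up'' has the direction reversed: $u$'s value went down, so the max over $N(w)$ can only go down. Consequently, killing an active vertex in $A$ may create new active vertices in $B$, and your iteration can move from $(p,q)$ to $(p-1,q')$ with $q'>q$, not $q'\le q$ as you assert. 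Once this is possible, the downward induction on $i+j$ no longer terminates at the desired $(i',j')$, and the ``swap argument'' you allude to (for when the value-$m$ vertex sits on the wrong side) is never supplied.

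The paper's proof avoids surgery altogether. It observes that the set of active vertices under the witnessing permutation $\pi$ is independent in $H$, selects any $i'$ of them from $A$ and $j'$ from $B$, and then builds a \emph{new} permutation from scratch: assign the top $i'+j'$ labels to the selected vertices (they are active since they are independent), and then greedily place the remaining labels in decreasing order, always on a vertex with an already-labelled neighbour. Connectedness of $H$ guarantees this greedy process can continue, and by construction every subsequently labelled vertex has a neighbour with a larger label, hence is not active. This one-shot construction is both shorter and sidesteps exactly the creation-of-new-activities problem that your local-modification approach runs into.
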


\begin{proof}
If $t_{i,j}(H)>0$, then there is a permutation $\pi$, where there are $i$ active vertices in $A$ and $j$ active vertices in $B$. Observe that these vertices have to form an independent subset of $v(H)$ since two active vertices can't be connected. Now let $i'\le i$, $j'\le j$, $(i',j')\ne (0,0)$. We will construct a permutation $\pi '$ that has $i'$ active vertices in $A$ and $j'$ in $B$, which implies that $t_{i',j'}(H)>0$. Let $\pi '$ be the following: assign $m,m-1,\dots,m-(i'+j')+1$ to some of the originally active vertices, such that there are $i'$ in $A$ and $j'$ in $B$. Since they are independent, they will be active. Then always write the next biggest number on a vertex, such that it has a neighbour that already has a number. Since $H$ is connected, we can always continue this until every number is assigned. By the construction of the process, no other vertex will become active. 
\end{proof}

\section{Concluding remarks}
\label{sect: concluding remarks}

Note that there is an extension of Definition~\ref{main-def} for not necessarily bipartite graphs. 

\begin{Def} \label{general-def}
Let $G=(V,E)$ be an arbitrary graph. Suppose that $V=[m]$. For a permutation $\pi$: $[m] \to [m]$, we say that a vertex $i \in V$ is active if $$\pi(i) > \max_{j \in N_G(i)} \pi(j).$$ Let $A(\pi)$ denote the set of active vertices with respect to the permutation $\pi$. We assign the variable $x_i$ to the vertex $i$. Then
$$
\widetilde{T}_G(x_1, x_2, \dots, x_m) = \frac{1}{m!} \sum_{\pi \in S_m} \prod_{i \in A(\pi)} x_i.
$$
\end{Def}

A univariate version of this polynomial was considered in \cite{diaz2019peaks}, though they treated degree $1$ slightly differently. It would be interesting to explore these polynomials in more depth.
\bigskip

In this paper, we proved that if $x_1= 2.9243$, then
$$T_M(x_1,0)T_M(0,x_1)\geq T_M(1,1)^2$$
for every matroid $M$. In \cite{beke2024merino}, it was proved that if $x<x_0$, where $x_0$ is the largest zero of the polynomial $x^3-9x+9$, then there exist matroids for which
$$T_M(x,0)T_M(0,x)< T_M(1,1)^2.$$
We have $x_0\approx 2.22668...$ It would be interesting to close the gap between the lower and the upper bound. We believe that the truth is closer to the lower bound, but it is very unlikely that some variant of our method would yield a bound that goes below $2.5$.

\bigskip

\noindent \textbf{Acknowledgment.} The third author of this paper is very grateful to Ferenc Bencs for useful discussions and that he provided a fast algorithm with implementation to compute the permutation Tutte polynomials of trees.  The authors thank the anonymous referees for their very thorough, constructive and helpful remarks.

\printbibliography

\end{document}